\documentclass{article}
\usepackage[utf8]{inputenc}
\usepackage[T1]{fontenc} 
\usepackage{amsmath}
\usepackage{amsfonts}
\usepackage{amssymb}
\usepackage{amsthm}
\usepackage{color}
\usepackage{esint}
\usepackage{enumerate}
\usepackage{dsfont}
\usepackage{stmaryrd}
\usepackage{hyperref}
\usepackage{graphicx}
\usepackage{graphics}
\usepackage{epstopdf}
\usepackage{epsfig}
\usepackage{setspace}
\usepackage{comment} 
\usepackage{tikz}
\usetikzlibrary{positioning,arrows}
\usetikzlibrary{calc,arrows}
\usetikzlibrary{er,positioning,bayesnet}

\graphicspath{{figures_review/}}
\usepackage{float}
\usepackage{appendix}

\newtheorem{theorem}{Theorem}
\newtheorem{lemma}[theorem]{Lemma}
\newtheorem{remark}[theorem]{Remark}

\newtheorem{corollary}[theorem]{Corollary}

\newcommand{\RR}{\mathbb{R}}

\newcommand{\NN}{\mathbb{N}}

\newcommand{\dis}{\displaystyle}
\newcommand{\dive}{\operatorname{div}}

\newcommand{\bA}{\boldsymbol{A}}
\newcommand{\bX}{\boldsymbol{X}}
\newcommand{\bb}{\boldsymbol{b}}
\newcommand{\bn}{\boldsymbol{n}}
\newcommand{\bc}{\boldsymbol{c}}
\newcommand{\bx}{\boldsymbol{x}}
\newcommand{\bv}{\boldsymbol{v}}
\newcommand{\bz}{\boldsymbol{z}}
\newcommand{\bs}{\boldsymbol{s}}
\newcommand{\bw}{\boldsymbol{w}}
\newcommand{\bq}{\boldsymbol{q}}
\newcommand{\bQ}{\boldsymbol{Q}}
\newcommand{\bp}{\boldsymbol{p}}
\newcommand{\bg}{\boldsymbol{g}}
\newcommand{\nab}{\boldsymbol{\nabla}}
\newcommand{\Aa}{\mathbb{A}}
\newcommand{\DD}{\mathbb{D}}
\newcommand{\MM}{\mathbb{M}}
\newcommand{\mT}{\mathcal{T}}

\newcommand{\intO}{\int_{\Omega}}
\newcommand{\rott}{\operatorname{\bf curl}}
\newcommand{\vertiii}[1]{{\left\vert\kern-0.25ex\left\vert\kern-0.25ex\left\vert #1 \right\vert\kern-0.25ex\right\vert\kern-0.25ex\right\vert}}



\begin{document}

\date{\today}
\author{Ludovic Chamoin$^{1,2}$ and Frédéric Legoll$^{3,4}$
\\
{\footnotesize $^1$ Universit\'e Paris-Saclay, ENS Paris-Saclay, CNRS, LMT,} \\ {\footnotesize 4 avenue des Sciences, 91190 Gif-sur-Yvette, France}
\\  
{\footnotesize $^2$ Institut Universitaire de France (IUF), 1 rue Descartes, 75231 Paris Cedex 5, France}
\\
{\footnotesize $^3$ Ecole Nationale des Ponts et Chauss\'ees, Laboratoire Navier,} \\ {\footnotesize 6-8 avenue Blaise Pascal, 77455 Marne-La-Vall\'ee Cedex 2, France}
\\  
{\footnotesize $^4$ Inria Paris, MATHERIALS project-team, 2 rue Simone Iff,} \\ {\footnotesize CS 42112, 75589 Paris Cedex 12, France}
\\
{\footnotesize \tt ludovic.chamoin@ens-paris-saclay.fr, frederic.legoll@enpc.fr}\\
}
\title{A pedagogical review on \textit{a posteriori} error estimation in Finite Element computations}

\maketitle


\begin{abstract}
This article is a review on basic concepts and tools devoted to \textit{a posteriori} error estimation for problems solved with the Finite Element Method. For the sake of simplicity and clarity, we mostly focus on linear elliptic diffusion problems, approximated by a conforming numerical discretization. The review mainly aims at presenting in a unified manner a large set of powerful verification methods, around the concept of equilibrium. Methods based on that concept provide error bounds that are fully computable and mathematically certified. We discuss recovery methods, residual methods, and duality-based methods for the estimation of the whole solution error (i.e. the error in energy norm), as well as goal-oriented error estimation (to assess the error on specific quantities of interest). We briefly survey the possible extensions to non-conforming numerical methods, as well as more complex (e.g. nonlinear or time-dependent) problems. We also provide some illustrating numerical examples on a linear elasticity problem in 3D.
\end{abstract}

\medskip

\noindent
Keywords: Error estimation; Finite Element Method; Discretization error; Adaptivity; Goal-oriented strategy


\tableofcontents

\section{Introduction}\label{section:introduction}

A large number of physical phenomena are described by partial differential equations. Since it is usually not possible to obtain closed form expressions for the exact solution of these equations, numerical methods with mathematically-based algorithms and discretization techniques are often used as simulation tools. Such methods typically only deliver an approximate solution (somehow the best function within a predefined finite-dimensional space) that is different from the exact solution. Consequently, in order to certify the accuracy of numerical simulations, two important questions are
\begin{itemize}
\item[(i)] How large is the overall discretization error between the exact and the approximate solution?
\item[(ii)] Where in the spatial domain (and time domain for time-dependent problems) is this error localized?
\end{itemize}
Answering in an appropriate manner to these two questions may be crucial in many engineering activities since a decision is often taken on the basis of numerical simulation results. Taking the argument one step further, a long-term goal in scientific computing is to design algorithms such that (i) a given precision is reached at the end of the simulation, and (ii) the computational work to achieve this accuracy is as small as possible.

In this article, we consider these questions for a particular numerical method, a conforming Finite Element Method (FEM), and mainly for a particular class of problems, namely linear elliptic diffusion problems. We already note that the approaches we discuss here can be extended to other mathematical problems and to other types of discretization. We review here the various \textit{a posteriori} error estimation methods (and the associated adaptive strategies) which have been developed over the last years, enlightening specific properties and links between the methods. We particularly highlight that methods providing for fully computable and guaranteed error bounds can be unified around the tools of dual analysis and the concept of equilibrium.

\medskip

This review article is organized as follows. The reference problem along with some elements of \textit{a priori} error analysis is given in Section~\ref{section:refpb}. The \textit{a posteriori} error estimation methods are next presented in three groups: flux recovery methods in Section~\ref{section:recovery}, residual methods in Section~\ref{section:residuals}, and duality-based constitutive relation error methods in Section~\ref{section:CRE}. Numerical illustrations of \textit{a posteriori} error estimation on a three-dimensional elasticity problem are shown in Section~\ref{section:illustration}. A unified perspective on these different methods is provided in Section~\ref{section:connexions}. Mesh adaptation is next discussed in Section~\ref{section:adaptivity}. Goal-oriented error estimation, where the aim is to certify the error on a given quantity of interest rather than on the whole solution itself, is addressed in Section~\ref{section:goaloriented} (with, in particular, a numerical illustration in Section~\ref{sec:goal_num_illus}). Extensions to other Finite Element schemes (i.e. beyond classical conforming FEM) and to other mathematical problems are briefly discussed in Sections~\ref{section:extensions1} and~\ref{section:extensions2}, respectively. Concluding remarks are collected in Section~\ref{section:conclusion}. We eventually provide in Appendix~\ref{section:appendixA} a proof of the fact that the Constitutive Relation Error estimator presented in Section~\ref{section:CRE}, and which is always an upper bound of the numerical error, is also, up to a multiplicative constant independent of the mesh size, a lower bound of the numerical error (see Theorem~\ref{theo:borneinf} and Corollary~\ref{coro:erc_autre} there).

\medskip

This review is written at an elementary level. It describes basic features of the approaches, presents them in a unified manner, and purposedly skips some technicalities (for which we prefer to refer to the relevant bibliography). We hope it will be useful for researchers and engineers looking for an up-to-date overview of the field, both in terms of theoretical and implementation aspects.


\section{Reference problem and notations}\label{section:refpb}

\subsection{Reference problem}\label{section:pbreffort}

Throughout this article, except in Section~\ref{section:extensions2}, we consider the following problem:
\begin{equation}\label{eq:refpbstrong}
-\dive (\Aa \nab u) = f \ \ \text{in $\Omega$}, \qquad u=0 \ \ \text{on $\Gamma_D$}, \qquad (\Aa \nab u) \cdot \bn = g \ \ \text{on $\Gamma_N$},
\end{equation}
where $\Omega$ is an open bounded subset of $\RR^d$ with Lipschitz boundary $\partial \Omega$, and $\Gamma_D$ and $\Gamma_N$ are parts of $\partial \Omega$ such that $\overline{\Gamma_D \cup \Gamma_N} = \partial \Omega$, $\Gamma_D \cap \Gamma_N = \emptyset$ and $|\Gamma_D| \neq 0$. We assume that $f \in L^2(\Omega)$ and that $\Aa \in [L^\infty(\Omega)]^{d\times d}$ is a symmetric matrix (we refer to Section~\ref{sec:adv-diff} for the study of some non-symmetric operators), which is uniformly bounded and positive in the sense that there exists $a_{\rm max} \geq a_{\rm min} > 0$ such that
\begin{equation}
\label{eq:elliptic}
\forall \boldsymbol{\xi} \in \RR^d, \quad a_{\rm min} |\boldsymbol{\xi}|^2 \le \Aa(x) \boldsymbol{\xi} \cdot \boldsymbol{\xi} \le a_{\rm max} |\boldsymbol{\xi}|^2 \quad \text{a.e. in $\Omega$}.
\end{equation}
The quantity $\bq=\Aa\nab u$ is the flux associated with $u$.

\medskip

The norm and semi-norm of a function $v$ in $H^k(\Omega)$ are denoted 
$$
\| v \|_k = \left( \intO \sum_{|\alpha| \le k} |D^\alpha v|^2 \right)^{1/2}, \qquad |v|_k = \left( \intO \sum_{|\alpha| = k} |D^\alpha v|^2 \right)^{1/2},
$$
where $D^\alpha v$ is the $\alpha$-th order derivative of $v$ (with $\alpha \in \NN^d$). Note that the norm of $v \in L^2(\Omega)$ is denoted $\| v \|_0$. When the norm is restricted over a subdomain $K \subset \Omega$, the subdomain is explicit in the notation. Hence, for instance, $\| v \|_{0,K}$ denotes the $L^2(K)$ norm of $v$.

\medskip

Considering the Hilbert space $V = \{v \in H^1(\Omega), \ \ v=0 \ \text{on $\Gamma_D$} \}$ endowed with the $H^1$ norm $\| \cdot \|_1$, we recall that the weak formulation of~\eqref{eq:refpbstrong} is:
\begin{equation}\label{eq:refpb}
\text{Find $u \in V$ such that, for any $v \in V$,} \qquad B(u,v)=F(v),
\end{equation}
where
$$
B(u,v) =\intO \Aa \nab u \cdot \nab v, \qquad F(v)=\intO f \, v + \int_{\Gamma_N} g \, v.
$$
The well-posedness of~\eqref{eq:refpb} of course directly follows from the Lax-Milgram theorem. The bilinear form $B$ is symmetric, continuous and coercive on $V$. It hence defines an inner product and induces the energy norm $\vertiii{v}=\sqrt{B(v,v)}$ on $V$, which is equivalent to $\| v \|_1$ on $V$:
\begin{equation}
\label{eq:equivalence}
\forall v \in V, \qquad \sqrt{\frac{a_{\rm min}}{1+C_{\Omega}^2}} \ \| v \|_1 \le \vertiii{v} \le \sqrt{a_{\rm max}} \ \| v \|_1,
\end{equation}
where $C_{\Omega}$ is the Poincar\'e constant of $\Omega$, which satisfies $\| v \|_0 \le C_{\Omega} \ |v|_1$ for any $v \in V$. We see that
$$
\forall v,w \in V, \quad | B(v,w) | \leq \vertiii{v} \ \vertiii{w}.
$$
In the sequel, we also use the notation $\dis \vertiii{\widetilde{\bq}}_q = \sqrt{\intO \Aa^{-1} \widetilde{\bq} \cdot \widetilde{\bq}}$ for any vector-valued field $\widetilde{\bq} \in (L^2(\Omega))^d$. 

\subsection{Finite Element (FE) approximation}

Before we proceed, we mention that we will assume throughout our text that the reader is reasonably familiar with finite element methods. We refer to the classical textbooks~\cite{BAT96,BER04,BRE08,CAN06,ERN04,QUA94} (see also introductory expositions in~\cite{BON14}, and (in French) in~\cite{JOL90,LUC97,RAP99,RAV83}).

\smallskip

Let $\mT_h$ be a partition of $\Omega$. We denote by $h_K$ the diameter of each element $K \in \mT_h$ (i.e. the largest distance between any two points in $K$) and by $\rho_K$ the diameter of the largest circle (or sphere) contained in $K$. We set $\dis h=\max_{K \in \mT_h}h_K$. The mesh is assumed to be regular (non-degenerate), in the sense that there exists $\gamma_0 >0$, independent of $h$, such that, for any $K \in \mT_h$, we have $1 \leq h_K/\rho_K \leq \gamma_0$. Let $U(K)$ denote the neighborhood of the element~$K$:
\begin{equation}
\label{eq:neighbor}
U(K)=\text{int}\left\{\cup \overline{J}, \quad J \in \mT_h, \quad \overline{J} \cap \overline{K} \neq \emptyset \right\}.
\end{equation}
We introduce the space $V_h^p$ of continuous and locally supported functions which are polynomials of degree up to $p$ on each element $K$. Note that $V_h^p \subset V$. We are thus considering here a conforming Finite Element Method (FEM). Non-conforming FEM will be considered in Section~\ref{section:extensions1}. The FE approximation of~\eqref{eq:refpb} is
\begin{equation}\label{eq:FEpb}
\text{Find $u_h \in V_h^p$ such that, for any $v \in V_h^p$,} \qquad B(u_h,v)=F(v),
\end{equation}
which is a well-posed problem, again in view of the Lax-Milgram theorem. Let
$$
R(v) = F(v)-B(u_h,v)
$$
be the so-called residual and 
\begin{equation*} 
\| R \|_\star
=
\sup_{v \in V, \, v\neq 0} \frac{|R(v)|}{\vertiii{v}}
\end{equation*}
be the dual norm of the residual. The discretization error of the approach is $e=u-u_h \in V$.
It satisfies the following three properties:
\begin{equation}\label{eq:errorprop}
\begin{aligned}
\forall v \in V, \quad B(e,v) &= R(v) \quad \text{(residual equation)}, 
\\
\forall v \in V_h^p, \quad B(e,v) &= 0 \quad \text{(Galerkin orthogonality)},
\\
\vertiii{e} &= \| R \|_\star.
\end{aligned}
\end{equation}

\begin{remark}
In this article, we do not consider errors other than those arising from discretization. In particular, we do not consider any geometry error (the partition $\mT_h$ is assumed to exactly coincide with $\Omega$) and any quadrature error (integrals over any element $K$ are assumed to be exactly computed). In the same spirit, we note that the problem~\eqref{eq:FEpb} may lead to a large linear system, which is then solved using iterative solvers. We ignore here the error coming from the use of such iterative solvers and refer to~\cite{HAB21,JIR10,MAL20} for its assessment.
\end{remark}

A direct consequence of the Galerkin orthogonality is the best approximation property:
\begin{equation}\label{eq:bestapprox}
\forall v_h \in V_h^p, \quad \vertiii{e} = \sqrt{\vertiii{u-v_h}^2 - \vertiii{v_h-u_h}^2} \le \vertiii{u-v_h}.
\end{equation}
Using~\eqref{eq:equivalence}, this implies the C\'ea's lemma:
$$
\forall v_h \in V_h^p, \quad \| e \|_1 \leq \sqrt{\frac{(1+C_{\Omega}^2) \, a_{\rm max}}{a_{\rm min}}} \ \| u-v_h \|_1.
$$

\subsection{Notion of error estimate}

The discretization error $e$ has two components in a given subdomain of $\Omega$: one that is locally generated and one that is transported from elsewhere (the so-called pollution error~\cite{BAB87}). Its evaluation $\vertiii{e}$ in the energy norm provides a global measure of the overall quality of the FE solution. The quantity $\vertiii{e}$ may be evaluated using error estimation methods, which can be classified in two groups:
\begin{itemize}
\item[(i)] \textit{a priori} error estimation, which can be performed before the approximate solution $u_h$ is known;
\item[(ii)] \textit{a posteriori} error estimation, which is obtained after $u_h$ is computed and therefore uses information from $u_h$ to estimate $\vertiii{e}$. The goal of \textit{a posteriori} error estimates is not only to offer a criterion that indicates whether a prescribed accuracy is met (i.e. whether $\vertiii{e}$ is smaller than some threshold), but also to give local error indicators which can be used to drive an adaptive mesh refinement strategy.
\end{itemize}
Focusing on the second group of methods, the global error estimate $\eta$ usually satisfies $\dis \eta^2=\sum_{K\in \mT_h}\eta_K^2$, where $\eta_K$ is a local error estimate associated with each element $K$. A crucial property which is demanded to \textit{a posteriori} error estimators is the equivalence between $\eta$ and the energy norm of the exact error, i.e. that there exist positive constants $C_1$ and $C_2$ independent of $h$ such that
$$
C_1 \vertiii{e} \leq \eta \leq C_2 \vertiii{e}.
$$
The quality of the estimator is assessed by the global effectivity index $i_{\rm eff}=\eta/\vertiii{e}$. The estimator is guaranteed (resp. relevant) if $i_{\rm eff} \geq 1$ (resp. $i_{\rm eff}\approx 1$). Local effectivity indices may also be defined.

\medskip

One can then formulate properties describing an acceptable error estimate:
\begin{itemize}
\item \textbf{reliability}: ensure that $\vertiii{e} \leq \eta$ holds, i.e. $i_{\rm eff} \ge 1$, and that $\eta$ is fully computable in order for $\eta$ to serve as a stopping criterion;
\item \textbf{accuracy}: ensure that the predicted error is close to the actual (unknown) error, i.e. $i_{\rm eff} \approx 1$; 
\item \textbf{local effectivity}: ensure that $\eta_K \leq C \, \vertiii{e}_{U(K)}$ for each element $K$, where we recall that $U(K)$ denotes the neighborhood of the element $K$ (see~\eqref{eq:neighbor}). This property is important for adaptive mesh refinement;
\item \textbf{asymptotic exactness}: ensure that $\dis \lim_{h \to 0} i_{\rm eff} = 1$;
\item \textbf{robustness}: guarantee the previous properties independently of the regularity of the solution and of the mesh, and independently of variations of the problem parameters;
\item \textbf{practicality}: provide an estimate $\eta$ and contributions $\eta_K$ which can be evaluated locally, with a small computational cost. Indeed, if this evaluation turns out to require a global computation with large resources, it may be just cheaper to solve the reference problem on a uniformly refined mesh (even though an estimate of the error is not available).
\end{itemize}

\subsection{\textit{A priori} error estimation}\label{section:apriori}

\textit{A priori} estimation of errors in numerical methods has long been an enterprise of numerical analysts~\cite{AZZ72,STR73,CIA78,BER00}. It allows to bound the error before any numerical solution is computed, and therefore gives rough information on the asymptotic convergence and stability of the numerical method that is used. It is not designed to provide a computable error estimate for a given mesh. The general approach for \textit{a priori} error estimation uses approximation theory or truncation error analysis~\cite{CHE66}. It can be briefly presented as follows.

For any integers $l \in [1,p+1]$ and $m \le l$ (recall that $p$ is the maximal polynomial degree in $V_h^p$), and for any $K \in \mT_h$ and any $v \in H^l(U(K))$, the Cl\'ement interpolant $\Pi_hv \in V_h^p$ of $v$ (see~\cite{CLE75} and the textbook~\cite[Lemma 1.127 and Remark 1.129]{ERN04}) satisfies
\begin{equation}\label{clementineq}
\| v- \Pi_h v \|_{m,K} \le C h_K^{l-m} \ \| v \|_{l,U(K)},
\end{equation}
where $C$ is independent of $h_K$, $K \in \mT_h$ and $v \in H^l(U(K))$, but depends on $p$ and $\gamma_0$ (the regularity parameter of the mesh). We refer to~\cite{SCO90} and the textbooks~\cite{VER96}, \cite[Sec. IX.3]{BER04} and~\cite[Sec. 4.8]{BRE08} for related estimations. We deduce from~\eqref{clementineq} that, when $p \geq 1$, we have, for any $K \in \mT_h$ and any $v \in H^1(U(K))$,
\begin{equation}\label{eq:Clem2a}
\| v-\Pi_hv\|_{0,K} \le C \, h_K \, \| v\|_{1,U(K)},
\end{equation}
where $C$ is independent of $v$ and $h_K$. This property will be useful in the sequel, as well as the following edge estimate (see~\cite[Lemma 1.127 and Remark 1.129]{ERN04}). Let $\Gamma_{\rm int}$ denote the union of the internal edges (or, if $d \geq 3$, interfaces) of the mesh. For any edge $\Gamma \subset \Gamma_{\rm int}\cup\Gamma_N$, let $U(\Gamma)$ denote the set of elements in $\mT_h$ sharing at least one vertex with $\Gamma$. Then, for any $v \in H^1(U(\Gamma))$, we have
\begin{equation}\label{eq:Clem2b}
\| v-\Pi_hv\|_{0,\Gamma} \le C \, l_\Gamma^{1/2} \, \| v \|_{1,U(\Gamma)},
\end{equation}
where $l_\Gamma$ is the diameter of $\Gamma$ (when $d=2$, $l_\Gamma$ is simply the edge length) and where $C$ is independent of $v$, $l_\Gamma$ and $\Gamma$. 

\medskip

The \textit{a priori} error estimation follows from~\eqref{clementineq}. We assume that there exists an integer $\alpha \in [1,p]$ such that $\| u \|_{\alpha+1}<\infty$. When $\Omega$ is a (convex) domain without re-entrant corners, $\Aa$ is sufficiently regular, $f \in L^2(\Omega)$ and when the Neumann boundary data $g$ is sufficiently smooth on $\Gamma_N$ and compatible with the homogeneous Dirichlet boundary condition on $\Gamma_D$, then the solution to~\eqref{eq:refpbstrong} belongs to $H^2(\Omega)$ and we thus can take $\alpha=1$ (see e.g.~\cite[Theorem 3.12]{ERN04}).
We then infer from the best approximation property~\eqref{eq:bestapprox} (taking $v_h$ equal to the Cl\'ement interpolant $\Pi_hu \in V_h^p$ of $u$) and from the bound~\eqref{clementineq} (with $m=1$ and $l=\alpha+1$) that
\begin{equation}
\label{eq:taux_alpha}
\vertiii{u-u_h} 
=
\vertiii{e} \le \vertiii{u-\Pi_hu} \le C \, h^\alpha \, \|u\|_{\alpha+1},
\end{equation}
where the constant $C$ is independent of $h$ and $u$ (throughout the article, the constant $C$ may change from one line to the next; when valid, the independence of that constant with respect to the mesh size and other quantities will always be underlined). When $u$ is sufficiently smooth (that is when $\| u \|_{p+1}<\infty$), we have the optimal rate of convergence $\vertiii{e} \le C \, h^p \| u \|_{p+1}$.

\medskip

The main drawback of this \textit{a priori} bound is that the right-hand side of~\eqref{eq:taux_alpha} is not computable in practice (since it is defined in terms of the unknown exact solution $u$), and difficult to accurately estimate. In addition, when estimating this right-hand side is possible, it is usually observed that the above bound highly overestimates the exact error.

\begin{remark}
\label{rem:AN}
A direct consequence of~\eqref{eq:taux_alpha} is that the $L^2$-norm of the error satisfies $\| e \|_0 \leq C \, h^\alpha \|u\|_{\alpha+1}$, for some $C$ independent of $h$ and $u$. However, this bound is not sharp. Sharp bounds can be obtained using the Aubin-Nitsche lemma, which is based on the following adjoint problem:
\begin{equation}\label{eq:AN}
\text{Find $\varphi \in V$ such that, for any $v \in V$,} \quad B^\ast(\varphi,v)=\intO (u-u_h)v,
\end{equation}
where $B^\ast(w,v)=B(v,w)$ for any $v$ and $w$ in $V$. In our case, $B$ is symmetric, hence $B^\ast=B$. 

We follow~\cite[Sec. 5.4]{BRE08} or~\cite[Sec. X.1]{BER04}. Assume that $\Aa$ and $\Omega$ are such that the solution $\varphi$ to~\eqref{eq:AN} belongs to $H^2(\Omega)$ and satisfies
\begin{equation}\label{eq:inter}
\| \varphi \|_2 \leq C \, \| u-u_h \|_0.
\end{equation}
A sufficient condition is that $\Aa$ is smooth and $\Omega$ is convex. Taking $v=u-u_h$ in~\eqref{eq:AN}, we have, for any $\varphi_h \in V^p_h$,
$$
\| u-u_h \|^2_0
=
B(u-u_h,\varphi)
=
B(u-u_h,\varphi-\varphi_h)
\leq
C \| u-u_h \|_1 \ \| \varphi-\varphi_h \|_1.
$$
Using~\eqref{clementineq} to bound the last factor and next~\eqref{eq:inter}, we deduce that
$$
\| u-u_h \|^2_0
\leq
C h \| u-u_h \|_1 \ \| \varphi \|_2
\leq
C h \| u-u_h \|_1 \ \| u-u_h \|_0
$$
and hence
\begin{equation}\label{eq:AN_utile}
\|u-u_h\|_0 \le C h \| u-u_h\|_1.
\end{equation}
In view of~\eqref{eq:taux_alpha}, we eventually obtain
$$
\|u-u_h\|_0 \le C h^{\alpha+1} \|u\|_{\alpha+1},
$$
where the constant $C$ is independent of $h$ and $u$. The convergence rate is now better than what we had at the beginning of the remark, simply using~\eqref{eq:taux_alpha}.
\end{remark}

\begin{remark}
It is interesting at this point to draw a parallel between the properties of the Cl\'ement interpolant, or of finite element approximations that we mentioned above, and a classical result on the polynomial interpolation of some function $f \in C^{p+1}$. From the values of $f$ at $p+1$ sampling points $x_i$ ($i = 0,1,\dots,p$), we define the Lagrange interpolant as the polynomial $P$ of degree $p$ such that $P(x_i)= f(x_i)$ for $i=0,1,\dots,p$. The interpolation error reads
$$
R(x) = f(x)-P(x) = \frac{f^{(p+1)}(\nu_x)}{(p+1)!}\prod_{i=0}^p (x-x_i) \quad \text{with $\nu_x \in [x_0,x_p]$},
$$
and it vanishes at sample points. For a uniform distribution of points, that is when $x_{i+1}-x_i = h$ for any $i=0,1,\dots,p-1$, the previous expression yields
$$
|R(x)| \le \frac{|f^{(p+1)}(\nu_x)|}{4(p+1)} \, h^{p+1}.
$$
We thus again observe a convergence rate of the order $h^{p+1}$ when using polynomials of degree $p$ to approximate sufficiently smooth functions $f$.
\end{remark}

To illustrate the \textit{a priori} error estimate, we consider the simple problem shown on Figure~\ref{fig:laplace}, that is the Laplace equation $-\Delta u = 0$ in a square domain (of unit size) with prescribed boundary conditions: a Neumann condition $\nab u \cdot \bn = 1$ on the top side of the square, and homogeneous Dirichlet conditions $u=0$ on the other sides. The expression of the exact solution of this problem (which is smooth) is available, so that the discretization error in the energy norm $\vertiii{e}$ when computing a finite element approximation can be evaluated exactly. This error is shown in Figure~\ref{fig:laplace} for several values of the uniform mesh size $h$, and when considering various finite element types. The asymptotic evolutions confirm the predictions given by \textit{a priori} error estimation.

\begin{figure}[H]
\begin{center}
\includegraphics[width=55mm]{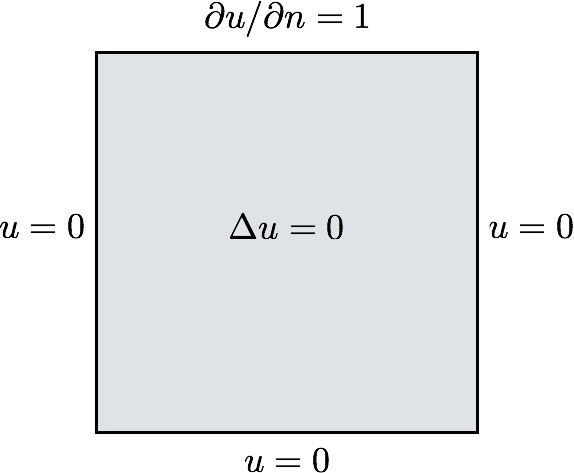} \hspace{2.em}
\includegraphics[width=55mm]{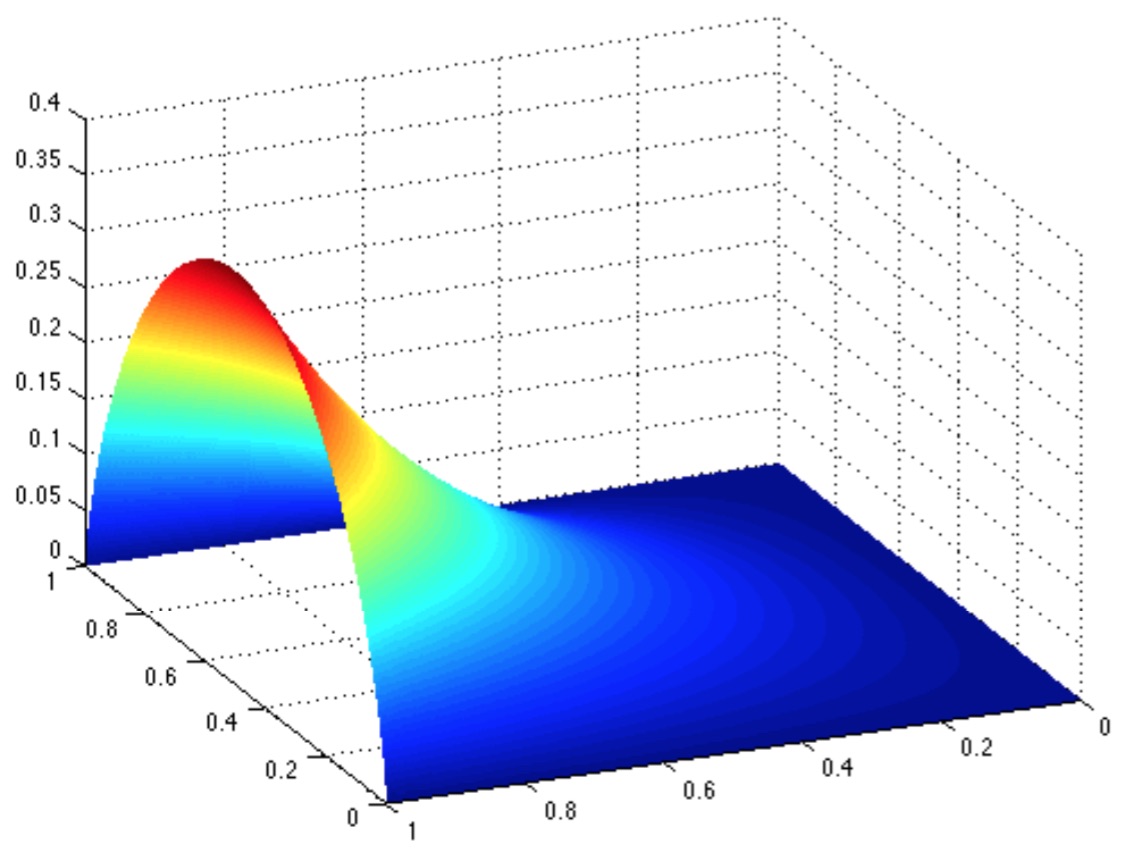} \hspace{2.em}
\\
\includegraphics[width=70mm]{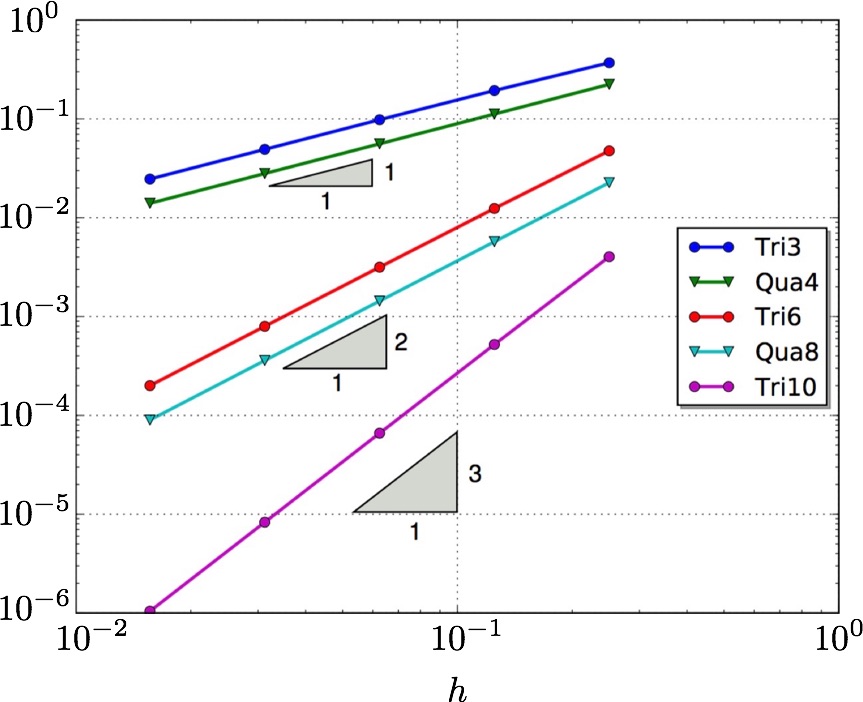}
\end{center}
\caption{Considered problem (left) on a unit square with analytical solution $u$ (center), and evolution of $\vertiii{e}$ as a function of $h$ (right) for various discretization spaces. The error when using P1 (resp. Q1) finite elements (Tri3, resp. Qua4) decays at the rate $O(h)$. When using P2 (resp. quadratic serendipity) finite elements (Tri6, resp. Qua8), it decays at the rate $O(h^2)$. When using P3 finite elements (Tri10), it decays at the rate $O(h^3)$. \label{fig:laplace}}
\end{figure}

\medskip

In contrast to \textit{a priori} error estimates, \textit{a posteriori} error estimates use information from the numerical solution to estimate the error. Furthermore, these estimates allow to adapt the approximation spaces (and the related meshes) to efficiently approximate the solution to the problem. In the following sections, we review three families of \textit{a posteriori} error estimation methods: flux recovery methods (in Section~\ref{section:recovery}), residual methods (in Section~\ref{section:residuals}) and duality-based methods (in Section~\ref{section:CRE}). Such methods allow to get additional information on the discretization error, compared to the asymptotic convergence rate predicted by \textit{a priori} error estimates. We then present in Section~\ref{section:connexions} a unified perspective that encompasses all these methods. We refer to~\cite{STR92} for a numerical comparison of some of these methods.


\section{Recovery methods}\label{section:recovery}

This category of error estimators is based on using approximations of the exact solution which are more accurate than the numerical solution.

\subsection{Richardson extrapolation}\label{section:Richardson}

A well-known technique in this category is the Richardson extrapolation, in which approximate solutions are obtained on sequences of refined nested meshes (or with shape functions of increasing order) and compared one to each other to obtain an interpolated indication of the error. Consider a refined mesh $\mT_{h^\ast}$ obtained by subdivision of $\mT_h$ (we thus have $V_h \subset V_{h^\ast}$), and let $u_{h^\ast} \in V_{h^\ast}$ be the approximation of $u$ computed on this refined mesh. Assuming that we are in the asymptotic range predicted by the \textit{a priori} error estimate~\eqref{eq:taux_alpha}, we write $\vertiii{u-u_{h^\ast}} \approx (h^\ast/h)^\alpha\vertiii{e}$. Using the Galerkin orthogonality property~\eqref{eq:errorprop}, we obtain that
$$
\vertiii{e} = \left( \vertiii{u-u_{h^\ast}}^2 + \vertiii{u_{h^\ast}-u_h}^2 \right)^{1/2} \approx [1-(h^\ast/h)^{2\alpha}]^{-1/2} \ \vertiii{u_{h^\ast}-u_h}.
$$
Having at our disposal $u_h$ and $u_{h^\ast}$, we are thus in position to compute the value of the right-hand side, which provides an approximation of $\vertiii{e}$.

\subsection{Basics on flux recovery methods}

Flux recovery methods, also referred to as flux-projection techniques, are alternative methods which are very popular in the engineering FE community. They were initially proposed in~\cite{ZIE87,ZIE92a,ZIE92b}, and are known under the name of Zienkiewicz--Zhu (ZZ) estimators. The idea is to post-process the approximate flux $\bq_h=\Aa \nab u_h$ in order to get a smoother (continuous) representation $\bq^\ast_h$ of the exact flux $\bq=\Aa\nab u$, which is {\em expected} to be more accurate than $\bq_h$. One next uses the estimate (obtained by substituting the unknown flux $\bq$ by the recovered flux $\bq^\ast_h$)
\begin{equation}\label{eq:ZZestimate}
\vertiii{e} = \vertiii{\bq-\bq_h}_q \approx \vertiii{\bq^\ast_h-\bq_h}_q,
\end{equation}
where the notation $\dis \vertiii{\cdot}_q$ is defined in Section~\ref{section:pbreffort}. The quantity $\vertiii{\bq-\bq^\ast_h}_q$ is hence thought to be much smaller than the error $\vertiii{\bq-\bq_h}_q$. 

The quality of the estimate $\eta = \vertiii{\bq^\ast_h-\bq_h}_q$ depends on that of the smoothing, i.e. on how well $\bq^\ast_h$ approximates $\bq$ (see~\cite{AIN89}). Assuming that there exists $c \in (0,1)$ such that
$$
\vertiii{\bq-\bq^\ast_h}_q \le c \vertiii{\bq-\bq_h}_q,
$$
we directly get, using the triangle inequality, that
$$
\frac{\eta}{1+c} \le \vertiii{e}\le \frac{\eta}{1-c}.
$$
We thus see that, the smaller $c$ is, the better the \textit{a posteriori} estimate $\eta$ is.

Several smoothing techniques may be used to build the field $\bq^\ast_h$, which is usually chosen in $\left( V_h^p \right)^d$:
\begin{itemize}
\item Nodal averaging: the value of $\bq^\ast_h$ at each nodal point is defined as an average of the values of $\bq_h$ at the neighboring Gauss points (which is hence inexpensive to compute);
\item Global $L^2$ projection of $\bq_h$ onto the FE space $\left( V_h^p \right)^d$;
\item Local projection on element patches, which is the so-called Superconvergent Patch Recovery (SPR or ZZ2) technique developed in~\cite{ZHA95,ZIE92a,ZIE92b}. In this more advanced approach, superconvergence properties of the FEM (which occur in very specific cases, see e.g.~\cite{BAB96,WHE87}) are taken into account with a local fitting procedure using polynomials of degree $p$. This technique, which is computationally efficient since it only requires to solve small systems, consists of two steps.

First, over each patch $\Omega_i$ of elements sharing the vertex node $i$, an intermediate recovered flux $\dis \bg_i(\bx)=\sum_n \boldsymbol{\alpha}_n^i \, \phi_n(\bx)$ (where $\phi_n$ are predefined polynomial functions of degree $p$) is constructed using values of $\bq_h$ at sampling points and performing a least squares fitting between $\bg_i$ and $\bq_h$ to determine the coefficients $\left\{ \boldsymbol{\alpha}_n^i \right\}_n$. In other words, a polynomial surface (of the same degree as the FE interpolation) is fitted to the FE flux values at sampling points. These points are (when they exist) superconvergence points $\bx_s$ where $|\bq(\bx_s)-\bq_h(\bx_s)|\le C(u)h^{p+1}$. They are for instance element centroids for linear FE ($p=1$), or Gauss-Legendre quadrature points.

Second, the recovered flux $\bq^\ast_h$, which is chosen as a function in $\left( V_h^p \right)^d$, is defined at each node as the average of the values at this node of the different functions $\bg_i$ (note that a node may belong to various patches $\Omega_i$, and thus several functions $\bg_i$ may be non-zero at that node). We next interpolate these nodal values using FE shape functions in order to get a continuous flux representation $\bq^\ast_h \in \left( V_h^p \right)^d$. Under superconvergence properties, one can show that the estimate $\eta=\vertiii{\bq^\ast_h-\bq_h}_q$ obtained in this way is asymptotically exact, i.e. $\dis \lim_{h \to 0} \frac{\eta}{\vertiii{e}} = 1$.
\end{itemize}

\begin{remark}
Instead of fitting a polynomial of degree $p$ to flux values at some sampling points as in the ZZ2 technique, the fitting of a polynomial of degree $p+1$ to solution values at nodal points was investigated in~\cite{ZHA05}. The approximation $\bq^\ast_h$ is next obtained after taking derivatives.
\end{remark}

\subsection{Recent advances in flux recovery methods}\label{section:advancedZZ}

A weakness of the general flux recovery approach (e.g. basic SPR technique) is that it does not use the fact that $u_h$ solves~\eqref{eq:FEpb}. For instance, it does not use information from Neumann boundaries where imposed tractions are known \textit{a priori}, which may lead to a  lack of accuracy of the recovered flux field along such boundaries. Also, the strategy is hazardous in case of singularities (cracks, multi-materials, \dots) or anisotropic meshes~\cite{CAI17,CAI09}, and may be not locally conservative, in the following sense. The estimated global error $\eta$ can be written as a sum of local contributions, but these local contributions may highly under- or overestimate the true local error. In contrast, the approaches presented in Sections~\ref{section:residuals} and~\ref{section:CRE} below provide estimations of the local errors that have been observed, in practice, to be accurate even in difficult cases. 

\medskip

As a consequence of their weaknesses, several improvements to smoothing techniques have been introduced later on in order to insert more physical insight in the procedure, and improve the accuracy and robustness of the flux recovery. In particular, the idea of locally enforcing the equilibrium equation was introduced in~\cite{BLA94,BOR97,LEE97,ROD07,WIB97,WIB94}. Equilibrium constraints are imposed on each patch by means of penalization (Recovery by Equilibrium in Patches (REP) method) or Lagrange multiplier fields (SPR-C method). They can also be directly inserted in the search space, which then consists of locally equilibrated fields, associated with the solution to some minimization problems over patches~\cite{KVA98,UBE04}. They allow to enhance the quality of the flux field that is recovered over each patch and provide for a locally (but not globally) equilibrated flux field $\bq^\ast_h$, i.e. a field that locally satisfies the equation $-\dive \bq^\ast_h = f$.

The SPR-C method is associated with a continuous fitting approach, and Lagrange multipliers are used to enforce internal and boundary equilibrium equations over each patch. To obtain a globally continuous flux field, a partition of unity is used to properly weight the flux polynomials constructed at the patch level.

\medskip

An upper bound on the error can next be obtained, as we now show, using the enhanced recovered flux field $\bq^\ast_h$ and an additional correction term to account for the fact that this flux field does not {\em globally} satisfy the equilibrium equation (see also~\cite{DIE07}, where an alternative bound to the one given below is proposed). We first recall that
\begin{equation}
\label{eq:def_Hdiv}
H(\dive,\Omega) = \left\{ \bp \in [L^2(\Omega)]^d, \quad \dive \bp \in L^2(\Omega) \right\}
\end{equation}
is an Hilbert space when endowed with the scalar product $\dis \langle \bp, \bq \rangle_{H(\dive,\Omega)} = \intO \bp \cdot \bq +(\dive \bp) (\dive \bq)$ and that, for any $\bp \in H(\dive,\Omega)$, the trace of $\bp\cdot \bn$ on $\partial \Omega$ is well-defined. For any $v \in V$ and for any flux $\bp \in H(\dive,\Omega)$, we then write
\begin{multline} \label{eq:lundi}
B(e,v) = \intO \Aa \nab (u-u_h)\cdot \nab v = \intO (\bq-\bq_h)\cdot \nab v \\ = \intO(\bp-\bq_h)\cdot \nab v + \intO(f + \dive \bp) \, v+\int_{\Gamma_N} (g-\bp\cdot \bn) \, v.
\end{multline}
Taking $\bp=\bq^\ast_h$ and $v = e$, and assuming that the lack of equilibrium $g-\bq^\ast_h\cdot \bn$ along the Neumann boundary is negligible, we get
\begin{multline}\label{eq:boundZZ}
\vertiii{e}^2 =  \intO(\bq^\ast_h-\bq_h)\cdot \nab e + \intO (f+\dive \bq^\ast_h) \, e 
\\ \quad \Longrightarrow \quad
\vertiii{e} \le \vertiii{\bq^\ast_h-\bq_h}_q + C h \| f + \dive \bq^\ast_h \|_0
\end{multline}
where we used the Cauchy-Schwarz inequality and the Aubin-Nitsche lemma (see Remark~\ref{rem:AN}) to bound $\|e\|_0$ by $C h \vertiii{e}$.

We observe that the upper bound involves two parts: the first one is the classical ZZ estimate~\eqref{eq:ZZestimate}, and the second one takes into account the lack of equilibrium of the recovered flux field $\bq^\ast_h$. A numerical procedure based on the Richardson extrapolation (see Section~\ref{section:Richardson}) may be employed to evaluate the constant $C$. Indeed, considering $h^\ast \ll h$ and noticing that
$$
\vertiii{e}^2 \approx \frac{\vertiii{u_{h^\ast}-u_h}^2}{1-(h^\ast/h)^{2\alpha}} \qquad \text{and} \qquad \|e\|_0^2 \approx \frac{\|u_{h^\ast}-u_h\|_0^2}{1-(h^\ast/h)^{2\alpha+2}},
$$
we get $\dis C \approx \sqrt{\frac{\|u_{h^\ast}-u_h\|^2_0 \, (1-(h^\ast/h)^{2\alpha})}{h^2 \, \vertiii{u_{h^\ast}-u_h}^2 \, (1-(h^\ast/h)^{2\alpha+2})}}$.

\begin{remark}
When a solution field $u^\ast_h$ is also recovered from $u_h$, in addition to the locally equilibrated recovered flux $\bq^\ast_h$, a lower bound on the error in the energy norm can be obtained. Starting again from~\eqref{eq:lundi} with $\bp=\bq^\ast_h$ and $v = u^\ast_h-u_h$, we get, using the Cauchy-Schwarz inequality, that
\begin{equation}\label{eq:LBZZ}
\vertiii{e} \ge \frac{\intO(\bq^\ast_h-\bq_h)\cdot \nab(u^\ast_h-u_h) + \intO (f + \dive \bq^\ast_h) \, (u^\ast_h-u_h)}{\vertiii{u^\ast_h-u_h}}.
\end{equation}
\end{remark}


\section{Residual methods}\label{section:residuals}

The idea of the residual methods is to use the residual equation~\eqref{eq:errorprop} to indirectly bound the error $\vertiii{e}$ or the dual norm $\| R \|_\star$ of the residual (in the so-called explicit approaches, described in Section~\ref{sec:resu_exp}), or to approximate its solution $e$ (in the so-called implicit approaches, described in Section~\ref{sec:resu_imp}). The residual actually contains three types of terms: (i) interior residuals that determine how well the approximate solution $u_h$ satisfies the mathematical model in each element, (ii) boundary residuals associated to non-verification of the boundary conditions, and (iii) inter-element residuals related to discontinuities in normal fluxes along the edges of the mesh elements. Explicit residual methods differ from implicit residual methods in the way these three terms are handled.

\subsection{Explicit methods}\label{sec:resu_exp}

The explicit residual method introduced in~\cite{BAB78,KEL83} employs information available from the FE solution $u_h$ along with residuals to directly compute the error estimate. There is no need to solve any additional boundary value problem, in contrast to the implicit methods presented in Section~\ref{sec:implicit} below. In the explicit methods, which share similarities with \textit{a priori} error estimation~\cite{STE98}, the three types of residual terms (exhibited after decomposing the weak residual functional) are post-processed separately and then lumped together.

We assume in the sequel that $\Aa$ is sufficiently smooth in each element of the mesh (typically, $\Aa \in [H^1(K)]^{d\times d}$ for any $K \in \mT_h$). Recalling that
\begin{equation}
\label{eq:c_est_le_sup}
\vertiii{e} = \sup_{v \in V, \ v\neq 0}\frac{|R(v)|}{\vertiii{v}} \quad \text{with} \quad R(v) = F(v)-B(u_h,v) = \langle R_{|u_h}, v \rangle_{V',V},
\end{equation}
where $R_{|u_h}$ is defined as an element of the dual space of $V$, the task is to find computable (local) estimates for the $H^{-1}$-norm of $R_{|u_h}$. We start from the residual equation~\eqref{eq:errorprop} and write
\begin{align}
  \forall v \in V, \quad B(e,v)
  &= F(v)-B(u_h,v)
  \nonumber
  \\
  &= F(v-\Pi_hv)-B(u_h,v-\Pi_hv) \qquad \text{[since $u_h$ solves~\eqref{eq:FEpb}]}
  \nonumber
  \\
  &= \intO f(v-\Pi_hv) + \int_{\Gamma_N} g(v-\Pi_hv) - \intO \Aa\nab u_h \cdot \nab (v-\Pi_hv)
  \nonumber
  \\
  &= \sum_{K\in \mT_h} \int_K r_K \, (v-\Pi_hv) -\sum_{\Gamma \subset \Gamma_{\rm int}\cup\Gamma_N} \int_\Gamma t_\Gamma \, (v-\Pi_hv),
\label{eq:ressplit}
\end{align}
where $\Pi_hv \in V_h^p$ is the Cl\'ement interpolant of $v$, and where $\Gamma_{\rm int}$ denotes the union of the internal edges. The quantities $r_K$ and $t_\Gamma$, which are defined by
\begin{equation}\label{eq:def_rK_tGamma}
\hspace{-1mm} \begin{array}{rcl}  
  r_K \! &=& \! f +\dive \left[ (\Aa\nab u_h)_{|K} \right],
  \\ \noalign{\vskip 3pt}
  t_\Gamma \! \! &=& 
\! \! \left\{
\begin{array}{l}
\Aa\nab u_h \cdot \bn-g \ \ \text{if $\Gamma \subset \Gamma_N$}, \\
\Aa\nab u_{h|K_1}\cdot \bn_1 + \Aa\nab u_{h|K_2}\cdot \bn_2 \ \ \text{if $\Gamma = (\partial K_1 \cap \partial K_2) \subset \Gamma_{\rm int}$},
\end{array}
\right.
\end{array}
\end{equation}
are called the internal and boundary residuals, respectively ($\bn_1$ and $\bn_2$ are the outward normal vectors to $\partial K_1$ and $\partial K_2$, respectively). They represent the two error sources and are computed from both the approximate solution $u_h$ and input data. Using the Cauchy-Schwarz inequality as well as~\eqref{eq:Clem2a} and~\eqref{eq:Clem2b}, we deduce from~\eqref{eq:ressplit} that, for any $v\in V$,
\begin{align*}
  & |B(e,v)|
  \\
  &\le
  \sum_{K \in \mT_h} \| r_K\|_{0,K} \, \| v-\Pi_hv\|_{0,K} + \sum_{\Gamma \subset \Gamma_{\rm int}\cup\Gamma_N} \| t_\Gamma \|_{0,\Gamma} \, \| v-\Pi_hv\|_{0,\Gamma}
  \\
  &\le
  C\left[\sum_{K \in \mT_h} h_K^2\| r_K\|^2_{0,K}+\sum_{\Gamma \subset \Gamma_{\rm int}\cup\Gamma_N} l_\Gamma \| t_\Gamma \|^2_{0,\Gamma} \right]^{1/2} \left[\sum_{K \in \mT_h} \| v\|^2_{1,U(K)} + \sum_{\Gamma \subset \Gamma_{\rm int}\cup\Gamma_N} \|v\|^2_{1,U(\Gamma)} \right]^{1/2}.
\end{align*}
We notice that $\dis \sum_{K \in \mT_h} \|v\|^2_{1,U(K)} + \sum_{\Gamma \subset \Gamma_{\rm int}\cup\Gamma_N} \|v\|^2_{1,U(\Gamma)} \le \widehat{C} \, \vertiii{v}^2$ where $\widehat{C}$ is independent of $h$ and $v \in V$. Taking $v=e$ in the above estimates, we deduce that
\begin{equation}\label{eq:boundexpres}
\vertiii{e} 
=
\frac{B(e,e)}{\vertiii{e}} 
\le 
C' \left[ \sum_{K \in \mT_h} h_K^2 \|r_K\|^2_{0,K} + \sum_{\Gamma \subset \Gamma_{\rm int} \cup \Gamma_N} l_\Gamma \| t_\Gamma \|^2_{0,\Gamma} \right]^{1/2}.
\end{equation}
Note that the constant $C'$, which only depends on the elements shapes and on $a_{\rm min}$ and $a_{\rm max}$ of~\eqref{eq:elliptic}, is usually unknown. It may be estimated but bounds are dictated by worst case scenarios and usually give very pessimistic estimators~\cite{JOH92}.

\medskip

The bound~\eqref{eq:boundexpres} is the sum of both element-wise and edge-wise contributions. Element-wise indicators of the local contribution to the bound for $\vertiii{e}$ may be defined under the form
\begin{equation}\label{eq:defthetaK}
\eta_K^2 = h_K^2\|r_K\|^2_{0,K} + \sum_{\Gamma \subset \partial K}\beta_\Gamma \, l_\Gamma \, \| t_\Gamma \|^2_{0,\Gamma} \ \ \text{with} \ \ \beta_\Gamma = \left\{
\begin{array}{l}
0 \quad \text{if $\Gamma \subset \Gamma_D$} \\
1/2\quad \text{if $\Gamma \subset \Gamma_{\rm int}$} \\
1\quad \text{if $\Gamma \subset \Gamma_N$}
\end{array}
\right. .
\end{equation}
Note that, for edges $\Gamma \subset \Gamma_{\rm int}$, other splittings of the boundary residual $l_\Gamma \, \| t_\Gamma \|^2_{0,\Gamma}$ can be chosen. The local contributions~\eqref{eq:defthetaK}, which are very cheap to compute, can serve to drive the mesh adaptivity, although they are not an estimate of the error in $K$ (including local and transported components). Likewise, the upper bound~\eqref{eq:boundexpres} cannot be used as a global error estimate since it is not computable in practice due to constants that are, except in very specific cases~\cite{GER12,KEL83}, unknown.

\begin{remark}
Since the constants $C$ in~\eqref{eq:Clem2a} and~\eqref{eq:Clem2b} are usually different, the weighting of internal and boundary residual contributions used in the definition~\eqref{eq:defthetaK} of $\eta_K$ is not justified. However, the correct relative weighting to attach to each type of contribution is far from obvious. In~\cite{CAR00}, explicit estimates for Cl\'ement's interpolation operator (and therefore for the constants $C$ in~\eqref{eq:Clem2a} and~\eqref{eq:Clem2b}) are given in specific cases. The bound~\eqref{eq:boundexpres} is written as
\begin{equation*}
\vertiii{e} \le c_1 \left( \sum_{K \in \mT_h} h_K^2 \|r_K\|^2_{0,K} \right)^{1/2} + c_2 \left( \sum_{\Gamma \subset \Gamma_{\rm int} \cup \Gamma_N} l_\Gamma \| t_\Gamma \|^2_{0,\Gamma} \right)^{1/2} 
\end{equation*}
with explicit expressions of $c_1$ and $c_2$. Nevertheless, the crude estimation usually leads to very pessimistic bounds even for regular meshes, particularly due to the fact that possible cancellations between residual types are lost when one deals with each type separately.
\end{remark}

\begin{remark}
The Aubin-Nitsche lemma (duality argument) can again be used to get \textit{a posteriori} error bounds in the $L^2$-norm. We recall (see~\eqref{eq:AN_utile}) that $\|u-u_h\|_0 \leq C h \| u-u_h \|_1$. We are then in position to use the bound~\eqref{eq:boundexpres} on $\| u-u_h \|_1$ to obtain an \textit{a posteriori} error bound on $\|u-u_h\|_0$. 
\end{remark}

\subsection{Implicit methods}\label{sec:resu_imp}
\label{sec:implicit}

Implicit residual methods, that we now describe, are more difficult and intrusive to implement in scientific computation softwares compared to explicit residual methods. They yet have the potential to provide more detailed and more robust information on the error and its sources. These methods avoid the difficult evaluation of some constants such as the one appearing in~\eqref{eq:boundexpres} by seeking an approximation of the solution $e$ of the residual equation. By treating together the three contributions in the residual, implicit residual methods retain more of the structure of the residual equation than explicit methods do, and thus provide tighter error bounds.

\subsubsection{Hierarchical approach}

The residual equation~\eqref{eq:errorprop} is a global equation which cannot be solved exactly (it is as complex as the reference problem). It requires a refined mesh in order to provide for a nontrivial FE approximate solution, a procedure which is usually not feasible. Defining the hierarchical complement space $V^{\rm comp}$ such that $V=V_h^p \oplus V^{\rm comp}$ (where the decomposition is orthogonal in the sense of the symmetric bilinear form $B$), a first possibility (called \textit{multilevel error estimation}) is to enlarge the Galerkin subspace $V_h^p$ by $V^{\rm comp}_h \subset V^{\rm comp}$ leading to a new subspace $\widetilde{V_h} = V^p_h \oplus V^{\rm comp}_h$ supposed to accurately approximate the solution $e$. By Galerkin orthogonality (see~\eqref{eq:errorprop}), we know that $e \in V^{\rm comp}$. For a simple implementation, $V^{\rm comp}_h$ may be chosen as the function space spanned by so-called \textit{bubble functions} with disjoint supports~\cite{BAN93}.

We then search an approximation $\widetilde{e}_h \in V^{\rm comp}_h$ of $e$ such that
$$
\forall v \in V^{\rm comp}_h, \qquad B(\widetilde{e}_h,v)=R(v) 
$$
and define $\eta=\vertiii{\widetilde{e}_h}$ as an error estimator. This estimator is reliable and effective if the saturation assumption is valid, namely if $e-\widetilde{e}_h$ is much smaller than $e$. More precisely, letting $\widetilde{u}_h$ be the approximation of $u$ in $\widetilde{V_h}$, it is easy to see that $\widetilde{e}_h = \widetilde{u}_h-u_h$, hence $e-\widetilde{e}_h = u-\widetilde{u}_h$. The saturation assumption hence holds whenever $u-\widetilde{u}_h$ is much smaller than $u-u_h$. This can be quantified in a manner similar to that of Section~\ref{section:recovery}, as follows: assuming that there exists $c \in (0,1)$ such that
$$
\vertiii{u-\widetilde{u}_h} = \vertiii{e-\widetilde{e}_h} \le c \, \vertiii{u-u_h} = c \, \vertiii{e},
$$
we directly get, using the triangle inequality and the Galerkin orthogonality, that
$$
\frac{\eta}{1+c} \le \vertiii{e}\le \frac{\eta}{\sqrt{1-c^2}}.
$$

As an alternative, the residual equation may be decomposed into a series of \textit{decoupled local} boundary value problems, and local approximations of $e$ are then searched. Such methods can be classified in different categories, depending on:
\begin{enumerate}
\item the small domain in which the local problem is posed: over each element in the mesh $\mT_h$ (element residual methods) or over patches of elements (subdomain residual methods); 
\item the boundary conditions imposed on the local problems. When Dirichlet boundary conditions are used, one obtains continuous approximations of $e$ and lower bounds on $\vertiii{e}$. In contrast, using Neumann boundary conditions allows to derive equilibrated flux fields and upper bounds on $\vertiii{e}$;
\item the numerical method used to approximate the solution of the local problems. In practice, a standard FE method on a finer mesh (or using higher-degree polynomial functions) is usually employed producing asymptotic estimates which have bounding properties only with respect to a reference numerical solution (more precisely, the quantities that can be computed in practice provide rigorous bounds on the error $u_h^\star - u_h$ rather than on $u - u_h$, where $u_h^\star$ is the approximation of $u$ obtained by using the finer mesh, resp. the higher-degree approximation space). Another option is to use a dual approach yielding a direct approximation of the local flux field and allowing for the computation of guaranteed upper bounds on the error. This approach will be further discussed in Section~\ref{section:CRE} below.
\end{enumerate}
We first consider subdomain residual methods in Section~\ref{section:locpbpatches} before turning to element residual methods in Section~\ref{section:elementresidualmeth}.

\subsubsection{Subdomain residual methods}\label{section:locpbpatches}

We describe here three subdomain implicit residual methods, the latter two being very close one to each other. 

A first subdomain implicit residual method was proposed in~\cite{BAB78}, in which auxiliary local problems with homogeneous Dirichlet boundary conditions are solved on each patch $\Omega_i$ of elements connected to the vertex $i$. Denoting by $\{ \varphi_j \}$ the basis functions of the approximation space $V_h^p$, and by $\{ \phi_i \}$ the first-order Lagrange basis functions associated to the mesh vertices (the support of $\phi_i$ is $\Omega_i$), we have, due to the partition of unity property $\dis \sum_i \phi_i=1$, that
\begin{equation}\label{eq:ressplitpatch}
\forall v \in V, \qquad B(e,v) =R \left( v\sum_i \phi_i \right) = \sum_i R(v \, \phi_i). 
\end{equation}
The idea is to replace the above single global residual problem by a set of local, independent problems. Noticing that $v \, \phi_i \in V_0(\Omega_i)$, with $V_0(\Omega_i) = \left\{w \in H^1(\Omega_i), \ \ \text{$w=0$ on $\partial \Omega_i$} \right\}$, and inspired by~\eqref{eq:ressplit}, we introduce the following local problem on each patch $\Omega_i$:
\begin{multline}\label{eq:reslocpatch}
\text{Find $e_i \in V_0(\Omega_i)$ such that, for any $v \in V_0(\Omega_i)$,} \\ B_{\Omega_i}(e_i,v) = \sum_{K\subset \Omega_i} \int_K r_K \, v -\sum_{\Gamma \subset \Omega_i}\int_\Gamma t_\Gamma \, v,
\end{multline}
where $r_K$ and $t_\Gamma$ are defined by~\eqref{eq:def_rK_tGamma} and where $\dis B_{\Omega_i}(u,v)=\int_{\Omega_i} \Aa\nab u \cdot \nab v$ is the restriction of $B$ on $\Omega_i$. Defining the estimate $\dis \eta^2=\sum_i B_{\Omega_i}(e_i,e_i)=\sum_i \vertiii{e_i}^2_{\Omega_i}$, it was shown in~\cite{BAB78} that there exist $C_1$ and $C_2$ independent of $h$ such that
$$
C_1 \, \eta \le \vertiii{e} \le C_2 \, \eta.
$$
In view of~\eqref{eq:c_est_le_sup}, the quantity $\sum_i e_i \in V$ obviously satisfies the lower bound
\begin{equation}\label{eq:LBres1}
\frac{|R(\sum_i e_i)|}{\vertiii{\sum_i e_i}}\le \vertiii{e}. 
\end{equation}
However, this approach does not provide a guaranteed upper bound on $e$ and $\eta$ is often a poor approximation of the error.

\medskip

A variation in the subdomain-residual method has been first proposed in~\cite{CAR00b,MAC00,MOR03,PRU04}, using auxiliary local problems with Neumann boundary conditions. It explicitly employs the partition of unity property verified by piecewise affine FE shape functions $\{ \phi_i \}$ in order to introduce well-posed (self-equilibrated) Neumann problems localized over patches $\Omega_i$ (which, we recall, are the support of the shape functions $\phi_i$). Consider the space $\dis W(\Omega_i) = \left\{ v \in L^1_{\rm loc}(\Omega_i), \ \ \int_{\Omega_i} \phi_i \, \Aa\nab v\cdot \nab v < \infty, \ \ \text{$v=0$ on $\partial \Omega_i \cap \Gamma_D$} \right\}$,
endowed with the inner product $\dis B_{\phi_i,\Omega_i}(u,v) = \int_{\Omega_i} \phi_i \, \Aa\nab u \cdot \nab v$ and the associated semi-norm $\vertiii{v}_{\phi_i,\Omega_i}$. Starting from~\eqref{eq:ressplitpatch}, the idea is to introduce the following local problems:
\begin{equation}\label{eq:locpbparchphii}
\text{Find $\xi_i \in W(\Omega_i)$ such that, for any $v \in W(\Omega_i)$,}\quad B_{\phi_i,\Omega_i}(\xi_i,v)=R(v \, \phi_i).
\end{equation}
For patches $\Omega_i$ which are not connected to $\Gamma_D$, the local problems~\eqref{eq:locpbparchphii} are of Neumann type. The compatibility condition is satisfied since, for any constant test function $v=v_0$, we have $\dis R(v_0 \, \phi_i) = v_0 \, R(\phi_i)=0$, the last equality being a consequence of~\eqref{eq:errorprop} and $\phi_i \in V_h^p$. The solution to~\eqref{eq:locpbparchphii} being defined up to an additive constant, it is in practice searched in the space $\dis \left\{ v \in W(\Omega_i), \ \int_{\Omega_i} v \, \phi_i=0 \right\}$. The well-posedness of~\eqref{eq:locpbparchphii} is established in~\cite{MOR03} using a weighted Poincar\'e-Wirtinger inequality.

Using the above $\xi_i$, we now build an error estimator. For any $v \in V$, we see that $v_{|\Omega_i} \in W(\Omega_i)$ for any $i$. We thus infer from~\eqref{eq:ressplitpatch} and~\eqref{eq:locpbparchphii} that
\begin{multline*}
B(e,v) 
= 
\sum_i R(v \, \phi_i)
=
\sum_i B_{\phi_i,\Omega_i}(\xi_i,v) 
\le 
\sum_i \sqrt{B_{\phi_i,\Omega_i}(\xi_i,\xi_i)} \sqrt{B_{\phi_i,\Omega_i}(v,v)} 
\\
\le 
\sqrt{\sum_i B_{\phi_i,\Omega_i}(\xi_i,\xi_i)} \sqrt{\sum_i B_{\phi_i,\Omega_i}(v,v)}.
\end{multline*}
Choosing $v=e$, we obtain
$$
\vertiii{e}^2 = B(e,e) \leq \eta \ \sqrt{\sum_i B_{\phi_i,\Omega_i}(e,e)}
$$
where $\dis \eta = \sqrt{\sum_i B_{\phi_i,\Omega_i}(\xi_i,\xi_i)} = \sqrt{\sum_i \vertiii{\xi_i}^2_{\phi_i,\Omega_i}}$. Noticing that $\dis \sum_i B_{\phi_i,\Omega_i}(e,e) = \sum_i B_{\phi_i,\Omega}(e,e) = B(e,e)$, we then deduce that
\begin{equation}
\label{eq:retour}
\vertiii{e} \leq \eta = \sqrt{\sum_i \vertiii{\xi_i}^2_{\phi_i,\Omega_i}}.
\end{equation}
Furthermore, a lower bound can be easily obtained as follows~\cite{PRU04}: using~\eqref{eq:c_est_le_sup} and the choice $\widetilde{v} = \sum_i \xi_i \, \phi_i \in V$, we write
\begin{equation}\label{eq:LBres2}
\vertiii{e} 
= 
\sup_{v \in V, \, v\neq 0} \frac{|R(v)|}{\vertiii{v}}
\geq
\frac{|R(\widetilde{v})|}{\vertiii{\widetilde{v}}}
=
\frac{\eta^2}{\vertiii{\widetilde{v}}}.
\end{equation}

\begin{remark}
In practice, the local problems~\eqref{eq:locpbparchphii} are solved on the polynomial space $W^{p+k}(\Omega_i)=\mathcal{P}^{p+k}(\Omega_i) \cap W(\Omega_i)$ of polynomials with degree up to $p+k$, leading to the estimate $\eta^\star$. It can be shown (see~\cite{PRU04}) that $\dis \vertiii{e} \le \eta^\star + 2 \inf_{v\in V_h^{p+k}} \vertiii{u-v}$ and that there exists a constant $C>1$, independent of $h$ and $k$, such that $\eta^\star \le C \vertiii{e}$.
\end{remark}

In~\cite{PAR06}, yet another variation of the subdomain residual method (later named ``flux free'') is considered, with another local problem (again complemented, as in~\cite{CAR00b,MOR03,PRU04}, with Neumann boundary conditions). Defining $V(\Omega_i)=\left\{v \in H^1(\Omega_i), \ \ \text{$v=0$ on $\partial \Omega_i \cap \Gamma_D$} \right\}$, local problems on patches are introduced as:
\begin{equation}\label{eq:locpbparchphii2}
\text{Find $z_i \in V(\Omega_i)$ such that, for any $v \in V(\Omega_i)$,} \quad B_{\Omega_i}(z_i,v)=R(v \, \phi_i),
\end{equation}
where $B_{\Omega_i}$ is defined as in~\eqref{eq:reslocpatch}, i.e. $\dis B_{\Omega_i}(u,v) = \int_{\Omega_i} \Aa\nab u \cdot \nab v$ (note the difference between $B_{\Omega_i}$ and the bilinear form used in~\eqref{eq:locpbparchphii}). For patches $\Omega_i$ which are not connected to $\Gamma_D$, these local problems are again of Neumann type. The condition $\dis \int_{\Omega_i} z_i=0$ is imposed for these patches $\Omega_i$. The problems~\eqref{eq:locpbparchphii2} are then well-posed.

To build an error estimator, we proceed as follows. For any $v \in V$, we see that $v_{|\Omega_i} \in V(\Omega_i)$ for any $i$. We thus infer from~\eqref{eq:ressplitpatch} and~\eqref{eq:locpbparchphii2} that
\begin{multline*}
B(e,v)
= 
\sum_i R(v \, \phi_i) 
= 
\sum_i B_{\Omega_i}(z_i,v)
= 
\sum_i B_{\rm brok}(z_i,v)
\\ =
B_{\rm brok} \left( \sum_i z_i,v \right)
\le 
\sqrt{B_{\rm brok}\left(\sum_i z_i,\sum_i z_i\right)} \ \sqrt{B(v,v)},
\end{multline*}
where $\dis B_{\rm brok}(u,v) = \sum_{K \in \mT_h} B_K(u,v) = \sum_{K \in \mT_h} \int_K \Aa \nab u \cdot \nab v$. Taking $v=e$, we deduce the upper bound
\begin{equation}\label{eq:flux_free2_UB}
\vertiii{e} \leq \eta = \sqrt{B_{\rm brok}\left(\sum_i z_i,\sum_i z_i\right)}.
\end{equation}
Numerical experiments show that the estimate~\eqref{eq:flux_free2_UB} obtained by solving~\eqref{eq:locpbparchphii2} is more accurate than the estimate~\eqref{eq:retour} obtained by solving~\eqref{eq:locpbparchphii} (i.e. when weighting the operator) (see~\cite{PAR06}). A lower bound on the error can be obtained as in the previous approach, using~\eqref{eq:c_est_le_sup} and $\widetilde{v} = \sum_i z_i \, \phi_i \in V$.

\begin{remark}
For linear elasticity problems, $B_{\Omega_i}(\bz_i,\bv)$ also vanishes when $\bv$ is a rigid body motion, that is when $\bv(\bx) = \bv_0 + \MM \bx$ where $\bv_0$ is constant and $\MM$ is a skew-symmetric matrix. It is necessary to change the right-hand side of~\eqref{eq:locpbparchphii2} in $\dis R\big( (\bv-\Pi^1_h\bv) \, \phi_i \big)$, where $\Pi^1_h \bv$ is the Cl\'ement interpolant of $\bv$ on piecewise affine FE functions, to ensure the solvability of~\eqref{eq:locpbparchphii2} in this more complex setting (see~\cite{PAR06}). In practice, $\Pi^1_h \bv$ is often replaced by the nodal interpolant of $\bv$.
\end{remark}

\subsubsection{Element residual methods}\label{section:elementresidualmeth}

Element implicit residual methods require solving auxiliary problems on each element $K$ rather than on patches of elements as in Section~\ref{section:locpbpatches} (see~\cite{BAN85,DEM84,STR92}). Defining $V(K)=\left\{v \in H^1(K), \ \ \text{$v=0$ on $\partial K \cap \Gamma_D$} \right\}$ and the broken space
$$
V_{\rm brok} = \oplus_K V(K),
$$
the local problems typically read:
\begin{multline}\label{locpbelem}
\text{Find $e_K \in V(K)$ such that, for any $v \in V(K)$,} \\ B_K(e_K,v)=\int_K r_K \, v + \sum_{\Gamma \subset \partial K \setminus \Gamma_D} \int_\Gamma R_\Gamma \, v, 
\end{multline}
with $R_\Gamma=-t_\Gamma/2$ (resp. $R_\Gamma=-t_\Gamma$) if $\Gamma \subset \Gamma_{\rm int}$ (resp. $\Gamma \subset \Gamma_N$), where $r_K$ and $t_\Gamma$ are defined by~\eqref{eq:def_rK_tGamma}.

For any $v \in V \subset V_{\rm brok}$, we have
$$
B(e,v) = R(v) = \sum_{K \in \mT_h} B_K(e_K,v),
$$
which implies, taking $v=e$, that the local estimates $\eta^2_K=B_K(e_K,e_K)$ provide for a guaranteed upper bound on the error:
\begin{equation}\label{eq:UB_elt}
\vertiii{e} \le \eta=\sqrt{\sum_{K \in \mT_h} \eta^2_K}.
\end{equation}
In addition, a lower bound on $\vertiii{e}$ can be derived from $e_{\rm est} = \sum_K e_K$, see~\cite{DIE03}. Note that $e_{\rm est} \in V_{\rm brok}$ but that, in general, $e_{\rm est} \not\in V$. In order to use~\eqref{eq:c_est_le_sup}, one has to introduce a correction $e_{\rm cor} \in V_{\rm brok}$ such that $e_{\rm est}+e_{\rm cor}\in V$.

\medskip

However, the local problems~\eqref{locpbelem} are not necessarily well-posed. For elements $K$ which are not connected to $\Gamma_D$, the problem~\eqref{locpbelem} is of Neumann type, and the compatibility relation is not satisfied (taking $v=1$ in~\eqref{locpbelem}, the left-hand side vanishes but the right-hand side does not, in the general case). Keeping $R_\Gamma$ defined from a trivial flux averaging as above, there are two possibilities to circumvent this issue:
\begin{itemize}
\item search $e_K$ in a regularizing subspace of bubble functions $V^{\rm comp}(K)$ (see~\cite{AIN00,BAN85}) vanishing at vertex nodes, so that the bilinear form is coercive. The quality of the obtained estimates of course depends on the choice of the subspace. In addition, the upper bound is not guaranteed any more since $e$ cannot usually be chosen as a test function in this variant of~\eqref{locpbelem}. 
\item search $e_K$ in $V(K)$ but change the right-hand side of~\eqref{locpbelem} in $\dis \int_K r_K (v-\Pi_h^1 v) + \sum_{\Gamma \subset \partial K \setminus\Gamma_D} \int_\Gamma R_\Gamma \, (v-\Pi_h^1v)$, where $\Pi_h^1v$ is the Cl\'ement interpolant of $v$ on piecewise affine FE functions (see~\cite{BAN85}). The Neumann compatibility relation is now satisfied and this variant of~\eqref{locpbelem} is well-posed, $e_K$ being defined up to the addition of a constant. The upper bound~\eqref{eq:UB_elt} again holds~\cite{BAN85}. 
\end{itemize}

An alternative and more effective possibility~\cite{AIN93}, which also provides for an upper bound on the error, is to consider a variant of~\eqref{locpbelem} where $R_\Gamma$ is replaced by some carefully chosen boundary data $\widehat{R}_{\Gamma,K}$ on $\partial K$. These data $\widehat{R}_{\Gamma,K}$ are constructed so that equilibrium over each element $K$ is ensured, i.e. such that
\begin{equation}
\label{eq:retour2}
\int_K r_K + \sum_{\Gamma \subset \partial K \setminus \Gamma_D}\int_\Gamma \widehat{R}_{\Gamma,K}=0.
\end{equation}
This is the basis of the so-called equilibrated element residual estimates.

The following variant of~\eqref{locpbelem} is then considered:
\begin{multline}
\label{eq:retour3}
\text{Find $\widehat{e}_K \in V(K)$ such that, for any $v \in V(K)$,} \\ B_K(\widehat{e}_K,v)=\int_K r_K v + \sum_{\Gamma \subset \partial K \setminus \Gamma_D}\int_\Gamma \widehat{R}_{\Gamma,K} \, v.
\end{multline}
This again corresponds to a problem with Neumann boundary conditions. In view of~\eqref{eq:retour2}, the compatibility condition holds and the solution $\widehat{e}_K$ to~\eqref{eq:retour3} is therefore well-defined, up to the addition of a constant.

The tractions $\widehat{R}_{\Gamma,K}$ are often chosen of the following form: on each element $K$,
\begin{equation*}
\widehat{R}_{\Gamma,K}(\bx)=\sigma_{\Gamma,K} \ \widehat{g}_\Gamma (\bx) -\Aa\nab u_{h|K}\cdot \bn_K
\end{equation*}
where $\sigma_{\Gamma,K}=\pm 1$ (to ensure the continuity of the normal flux across the element edges) and where $\widehat{g}_\Gamma$ is an approximation of the exact normal boundary flux $\Aa\nab u_{|\Gamma}\cdot \bn$ to be appropriately chosen (see Section~\ref{section:SA} for technicalities when computing $\widehat{g}_\Gamma$). The fact that $\widehat{R}_{\Gamma,K}$ satisfies the equilibrium equation~\eqref{eq:retour2} provides for more realistic boundary conditions than the two approaches described at the beginning of this Section~\ref{section:elementresidualmeth}. The approach eventually provides a guaranteed error estimate that is observed to be more accurate. 

\begin{remark}
The change of the right-hand side of~\eqref{locpbelem} performed in~\cite{BAN85} can actually be seen as an implicit way of recovering equilibrated tractions.
\end{remark}


\section{Duality-based methods}\label{section:CRE}

We now turn to a different class of methods. The \textit{a posteriori} error estimation methods that we present in this section are built from a residual on the constitutive relation $\bq=\Aa\nab u$ rather than from a balance residual. More precisely, the estimation methods presented in Section~\ref{section:residuals} are based on the residuals $r_K = f +\dive \left[ (\Aa\nab u_h)_{|K} \right]$ in each element $K$ and on the flux jumps $t_\Gamma$, two quantities that enter in the right-hand side of~\eqref{eq:ressplit} and of the auxiliary local problems~\eqref{eq:reslocpatch}, \eqref{locpbelem}, \eqref{eq:retour3}, \dots The spirit of the approaches presented here is different. Recalling that $f \in L^2(\Omega)$, the problem~\eqref{eq:refpbstrong} is written, somewhat as for mixed methods, in the form
\begin{eqnarray}
\label{eq:mixed1}
-\dive \bq &=& f \ \ \text{in $\Omega$}, \qquad \bq \cdot \bn = g \ \ \text{on $\Gamma_N$}, \qquad \bq \in H(\dive,\Omega),
\\
\label{eq:mixed2}
u &=& 0 \ \ \text{on $\Gamma_D$}, \qquad u \in H^1(\Omega),
\\
\label{eq:mixed3}
\Aa\nab u &=& \bq \ \ \text{in $\Omega$},
\end{eqnarray}
where we recall that $H(\dive,\Omega)$ is defined by~\eqref{eq:def_Hdiv} and that, for any $\bq \in H(\dive,\Omega)$, the trace of $\bq\cdot \bn$ on $\partial \Omega$ is well-defined.

\medskip

The methods described in this Section~\ref{section:CRE} aim at
\begin{itemize}
\item building a function $\widehat{u}$ that satisfies~\eqref{eq:mixed2} (one usually chooses $\widehat{u} = u_h$);
\item building a vector-valued function $\widehat{\bq}$ that satisfies the equilibrium equation~\eqref{eq:mixed1}; this is often done by post-processing $\Aa \nab u_h$;
\item estimating the error $e=u-u_h$ in term of the error in~\eqref{eq:mixed3}, that is $\Aa \nab \widehat{u} - \widehat{\bq}$.
\end{itemize}

\subsection{Dual approach}\label{section:dualapp}

Starting from the residual equation~\eqref{eq:errorprop} verified by $e\in V$, namely
\begin{equation*}
\forall v \in V, \qquad B(e,v)=F(v)-B(u_h,v),
\end{equation*}
we observe that $e$ is equivalently the solution of the following (so-called primal) variational problem
\begin{equation*}
J(e) = \inf \left\{ J(w), \quad w\in V \right\},
\end{equation*}
where $J$ is the quadratic functional
\begin{multline*}
J(w) = \frac{1}{2} B(w,w)-F(w)+B(u_h,w) \\ = \frac{1}{2} \intO \Aa\nab w\cdot \nab w -\intO f \, w -\int_{\Gamma_N} g \, w + \intO \Aa\nab u_h \cdot \nab w.
\end{multline*}
We also note that $\dis J(e)=-\frac{1}{2}\intO \Aa\nab e\cdot \nab e$. We therefore have
\begin{equation*}
 \forall w \in V, \qquad \vertiii{e}^2 = -2J(e) \ge -2J(w). 
\end{equation*}
An interesting consequence is that we can easily compute a lower bound on the error $\vertiii{e}$, namely $\sqrt{-2 J(w)}$ for any $w \in V$ such that $J(w) < 0$. However, this lower bound is usually poor unless $w$ is a suitably chosen representation of $e$ as in~\eqref{eq:LBZZ}, \eqref{eq:LBres1} or~\eqref{eq:LBres2}.

\begin{remark}
An alternative way to obtain that lower bound is as follows. Introduce the potential energy functional $J_1$ associated with the reference problem, which is defined by $\dis J_1(w)=\frac{1}{2}\intO \Aa\nab w\cdot \nab w -\intO f \, w -\int_{\Gamma_N} g \, w$. We recall that the solution $u$ to~\eqref{eq:refpbstrong} satisfies $\dis J_1(u)=\inf_{w\in V}J_1(w)$. We then have that $\vertiii{e}^2 = 2(J_1(u_h)-J_1(u))$, and hence the lower bound
\begin{equation}
\label{eq:lower}
\forall w \in V, \quad \vertiii{e}^2 \ge -2(J_1(w)-J_1(u_h)) = -2J(w-u_h).
\end{equation}
We note that, in view of~\eqref{eq:lower}, we would like to compute $w$ such that $J_1(w)$ is as small as possible. In the space $V_h^p$, we know that $u_h$ is the unique minimizer of $J_1$. Using~\eqref{eq:lower} with $w \in V_h^p$ is thus not informative. For~\eqref{eq:lower} to be useful, $w$ should be searched in a space larger than $V_h^p$, as performed in~\eqref{eq:LBZZ}, \eqref{eq:LBres1} or~\eqref{eq:LBres2}.
\end{remark}

A complementary variational principle can be associated to the primal variational principle and may be used to get an upper bound on $\vertiii{e}$. To that aim, we introduce the space
\begin{equation*}
W=\{\bp \in H(\dive,\Omega), \quad \dive \bp + f=0 \; \text{in $\Omega$}, \quad \bp\cdot \bn = g\; \text{on $\Gamma_N$} \},
\end{equation*}
where we recall that the Hilbert space $H(\dive,\Omega)$ is defined by~\eqref{eq:def_Hdiv}. 

Consider the quadratic functional
\begin{equation*}
G(\bp) = \frac{1}{2} \intO \Aa^{-1} (\bp-\Aa \nab u_h) \cdot (\bp-\Aa \nab u_h) = \frac{1}{2} \vertiii{\bp-\Aa \nab u_h}^2_q
\end{equation*}
and the so-called complementary variational problem
\begin{equation}\label{eq:pbcomplement}
\inf \left\{ G(\bp), \ \ \bp \in W \right\}.
\end{equation}
The minimization problem~\eqref{eq:pbcomplement} is well posed, and it is easy to see that the solution flux $\bq=\Aa\nab u$ (where $u$ is the solution to the reference problem~\eqref{eq:refpbstrong}) is the solution to~\eqref{eq:pbcomplement}. We obviously have $\vertiii{e}^2 = 2G(\bq)$. We thus deduce the following upper bound:
\begin{equation}\label{eq:upper}
\forall \bp \in W, \quad \vertiii{e}^2 \le 2G(\bp).
\end{equation}

\begin{remark}
An alternative way to obtain that upper bound is as follows. Introduce the complementary energy functional $J_2$ associated with the reference problem, which is defined by $\dis J_2(\bp) = \frac{1}{2} \intO \Aa^{-1}\bp \cdot \bp$. We recall that the exact flux $\bq=\Aa\nab u$ satisfies $\dis J_2(\bq)=\inf_{\bp\in W} J_2(\bp)$. We then have that $\vertiii{e}^2 = 2(J_2(\bq)+J_1(u_h))$, from which we deduce the upper bound
\begin{equation}\label{eq:majJ2}
\forall \bp \in W, \quad \vertiii{e}^2 \le 2(J_2(\bp)+J_1(u_h)) = 2 G(\bp).
\end{equation}
In view of~\eqref{eq:majJ2}, we would like to compute $\bp$ such that $J_2(\bp)$ is as small as possible. 
\end{remark}

\subsection{Constitutive relation error functional}
\label{sec:cre}

A flux field that belongs to $W$ will be said to be statically admissible (in the sense that it verifies the equilibrium equations) and denoted $\widehat{\bq}$ in the following. For the pair $(u_h,\widehat{\bq}) \in V_h^p \times W$, we define the constitutive relation error (CRE) functional $E_{\rm CRE}$ by
\begin{equation*}
E^2_{\rm CRE}(u_h,\widehat{\bq})=\frac{1}{2} \, \vertiii{\widehat{\bq}-\Aa\nab u_h}^2_q.
\end{equation*}
We of course note that $E^2_{\rm CRE}(u_h,\widehat{\bq}) = G(\widehat{\bq}) = J_1(u_h)+J_2(\widehat{\bq})$. In view of~\eqref{eq:upper}, we then get that, for any $\widehat{\bq} \in W$, $\sqrt{2} \, E_{\rm CRE}(u_h,\widehat{\bq})$ is an upper-bound on $\vertiii{e}$ (without any generic multiplicative constant), and that $\dis \vertiii{e} = \inf_{\widehat{\bq} \in W}\sqrt{2} \, E_{\rm CRE}(u_h,\widehat{\bq})$. We show in what follows how to obtain more precise relations.

\medskip

We first have the following result (the so-called Prager-Synge equality), which will be most useful in what follows.
\begin{lemma}
For any $\widehat{\bq} \in W$, we have
\begin{equation}\label{eq:propertiesCRE2}
2 \, E^2_{\rm CRE}(u_h,\widehat{\bq}) = \vertiii{e}^2 + \vertiii{\bq-\widehat{\bq}}^2_q.
\end{equation}
\end{lemma}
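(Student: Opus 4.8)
The plan is to expand the squared $q$-norm defining $E_{\rm CRE}$ around the exact flux $\bq=\Aa\nab u$ and to exploit the bilinear (Hilbert) structure behind $\vertiii{\cdot}_q$. First I would observe that, by definition, $2\,E^2_{\rm CRE}(u_h,\widehat{\bq}) = \vertiii{\widehat{\bq}-\Aa\nab u_h}^2_q$. Since $e=u-u_h$ gives $\Aa\nab e = \Aa\nab u-\Aa\nab u_h = \bq-\Aa\nab u_h$, the energy norm of the error can be recast purely in terms of fluxes:
\[
\vertiii{e}^2 = B(e,e) = \intO \Aa^{-1}(\bq-\Aa\nab u_h)\cdot(\bq-\Aa\nab u_h) = \vertiii{\bq-\Aa\nab u_h}^2_q .
\]
Thus the identity~\eqref{eq:propertiesCRE2} to be proven is equivalent to the Pythagorean-type relation $\vertiii{\widehat{\bq}-\Aa\nab u_h}^2_q = \vertiii{\bq-\Aa\nab u_h}^2_q + \vertiii{\bq-\widehat{\bq}}^2_q$.

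Next I would decompose $\widehat{\bq}-\Aa\nab u_h = (\widehat{\bq}-\bq) + (\bq-\Aa\nab u_h)$ and expand the left-hand side using the bilinear form $(\bp_1,\bp_2)\mapsto \intO \Aa^{-1}\bp_1\cdot\bp_2$ associated with the squared norm $\vertiii{\cdot}^2_q$. This produces exactly the two squared contributions appearing on the right-hand side, plus a cross term equal to $2\intO \Aa^{-1}(\widehat{\bq}-\bq)\cdot(\bq-\Aa\nab u_h)$. The entire result then hinges on showing that this cross term vanishes.

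The main obstacle, and the crux of the argument, is precisely this orthogonality. Using $\Aa^{-1}(\bq-\Aa\nab u_h)=\nab e$, the cross term reads $2\intO (\widehat{\bq}-\bq)\cdot\nab e$. Since both $\widehat{\bq}$ and $\bq$ belong to $W$, the field $\bw := \widehat{\bq}-\bq$ satisfies $\dive\bw = 0$ in $\Omega$ and $\bw\cdot\bn = 0$ on $\Gamma_N$. Applying Green's formula for $H(\dive,\Omega)$ fields against $e\in V\subset H^1(\Omega)$,
\[
\intO (\widehat{\bq}-\bq)\cdot\nab e = -\intO (\dive\bw)\,e + \int_{\partial\Omega}(\bw\cdot\bn)\,e ,
\]
the volume term vanishes because $\dive\bw = 0$, while the boundary term vanishes because $e=0$ on $\Gamma_D$ (as $e\in V$) and $\bw\cdot\bn = 0$ on $\Gamma_N$, with $\overline{\Gamma_D\cup\Gamma_N}=\partial\Omega$. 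Hence the cross term is zero, which closes the proof.

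The only technical care needed is the justification of this Green identity: it is licensed by the fact, recalled just before~\eqref{eq:lundi}, that the normal trace $\bw\cdot\bn$ of an $H(\dive,\Omega)$ field is well-defined on $\partial\Omega$, together with $e\in H^1(\Omega)$. I expect no other subtlety, as the remaining steps are purely algebraic manipulations of the $q$-inner product.
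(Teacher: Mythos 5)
Your proof is correct and follows essentially the same route as the paper: both expand $\vertiii{\widehat{\bq}-\Aa\nab u_h}^2_q$ via the decomposition $\widehat{\bq}-\Aa\nab u_h=(\widehat{\bq}-\bq)+(\bq-\Aa\nab u_h)$ and kill the cross term $\intO(\widehat{\bq}-\bq)\cdot\nab e$ by integration by parts, using that $\widehat{\bq}$ and $\bq$ both lie in $W$ and that $e$ vanishes on $\Gamma_D$. Your added remark on the $H(\dive,\Omega)$ normal-trace justification is a welcome precision but does not change the argument.
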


\begin{proof}
Let $u$ be the solution to the reference problem~\eqref{eq:refpbstrong}. We write
\begin{eqnarray*}
2 \, E_{\rm ERC}^2(u_h,\widehat{\bq})
&=&
\vertiii{\widehat{\bq} - \Aa \nab u_h}^2_q
\\
&=&
\vertiii{(\widehat{\bq} - \Aa \nab u) + (\Aa \nab u - \Aa \nab u_h)}^2_q
\\
&=&
\vertiii{\widehat{\bq} - \Aa \nab u}^2_q
+
\vertiii{u - u_h}^2
+
2 \intO (\widehat{\bq} - \Aa \nab u) \cdot  \nab (u - u_h).
\end{eqnarray*}
The last term in the above right-hand side vanishes by integration by part, using that both $\widehat{\bq}$ and $\bq = \Aa \nab u$ belong to $W$ and that $u=u_h$ on $\Gamma_D$. We hence obtain~\eqref{eq:propertiesCRE2}.
\end{proof}

We also have the following properties (see~\cite{LAD04}):
\begin{equation}\label{eq:propertiesCRE1}
\begin{aligned}
(u_h,\widehat{\bq}) \in V_h^p \times W \ \text{s.t.} \ E^2_{\rm CRE}(u_h,\widehat{\bq}) &= 0 \quad \Longleftrightarrow \quad u_h = u \ \text{and} \ \widehat{\bq} = \bq, \\
\text{Hypercircle property:} \quad
E^2_{\rm CRE}(u_h,\widehat{\bq}) &= 2 \, \vertiii{\bq-\widehat{\bq}^m}^2_q \ \ \text{where} \ \ \widehat{\bq}^m=\frac{1}{2}(\widehat{\bq}+\Aa\nab u_h).
\end{aligned}
\end{equation}
The hypercircle property can easily be shown as a consequence of the Prager-Synge equality~\eqref{eq:propertiesCRE2}, as illustrated on Figure~\ref{fig:hypercircle}. Introducing $\delta \bq = (\widehat{\bq}-\bq_h)/2$, we indeed infer from~\eqref{eq:propertiesCRE2} that
\begin{align*}
  2 \, E^2_{\rm CRE}(u_h,\widehat{\bq})
  &=
  \vertiii{\bq-\bq_h}^2_q + \vertiii{\bq-\widehat{\bq}}^2_q
  \\
  &=
  \vertiii{\bq-(\widehat{\bq}^m-\delta \bq)}^2_q + \vertiii{\bq-(\widehat{\bq}^m+\delta \bq)}^2_q
  \\
  &=
  2 \, \vertiii{\bq-\widehat{\bq}^m}^2_q + 2 \, \vertiii{\delta \bq}^2_q,
\end{align*}
and we observe that $2 \, \vertiii{\delta \bq}^2_q = \vertiii{\widehat{\bq}-\bq_h}^2_q/2 = E^2_{\rm CRE}(u_h,\widehat{\bq})$, which yields the hypercircle property.

\begin{figure}[H]
\begin{center}
\includegraphics[width=70mm]{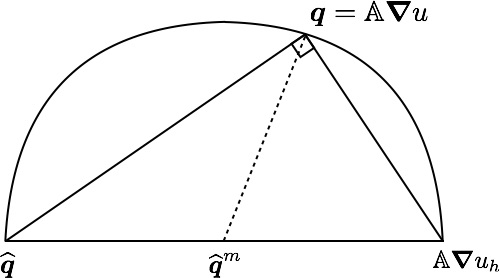}
\end{center}
\caption{Geometrical illustration of the Prager-Synge equality and the hypercircle property. \label{fig:hypercircle}}
\end{figure}

\medskip

A practical consequence of~\eqref{eq:propertiesCRE2} is that the quantity $\sqrt{2} \, E_{\rm CRE}(u_h,\widehat{\bq})$ is an upper bound on the error $\vertiii{e}$ for any $\widehat{\bq}\in W$. The accuracy of this bound of course depends on the choice of $\widehat{\bq}$. As shown below, it is possible to efficiently build some $\widehat{\bq} \in W$ such that this upper bound is accurate in the sense that $\sqrt{2} \, E_{\rm CRE}(u_h,\widehat{\bq}) / \vertiii{e}$ is close to 1.

In addition, depending on the precise way the flux $\widehat{\bq}\in W$ is constructed, a lower bound on $\vertiii{e}$ can also be shown (see e.g.~\cite{LAD83,LAD04}). This lower bound is usually of the form $E_{\rm CRE}(u_h,\widehat{\bq}) \le C \, \vertiii{e}$, where $C$ is a constant independent of the mesh size $h$, so that $E_{\rm CRE}(u_h,\widehat{\bq})$ and $\vertiii{e}$ have the same asymptotic convergence rate. The precise statement and the proof of such a bound for a 2D problem discretized by piecewise affine finite elements is given in Appendix~\ref{section:appendixA} below.

\subsection{Construction of a statically admissible flux field}\label{section:SA}

In order to compute an upper bound on the error, one possibility is to use~\eqref{eq:upper} and to approximate the complementary problem~\eqref{eq:pbcomplement} using a FE discretization with equilibrium elements, namely functions in $W$ (see~\cite{DEB95,FRA01,KEM09,MOI17}). Even though this procedure usually provides for the sharpest error bounds, it is in practice very challenging for a twofold reason. First, the constraints in the space $W$ make the practical construction of feasible functions $\widehat{\bq}$ not straightforward (and barely implementable in commercial finite element softwares since it refers to a non-conventional numerical architecture). Second, such an approach amounts to solve an additional global problem, and thus requires a large computational effort. In addition, it is unclear whether it is actually necessary to carry out any further global computations, since there is already some global information in the FE approximation $u_h$ (and in the flux $\bq_h = \Aa \nab u_h$) that we have at our disposal. 

\medskip

In the following, we proceed differently. We detail various techniques that enable to recover a flux $\widehat{\bq}_h \in W$ from local independent computations and from postprocessing operations on $u_h$ and $\bq_h$. Note that we are able to build functions in $W$ (a space that carries a {\em global} contraint) while only solving {\em local} problems because we are going to use the global information we already have, through the knowledge of $\bq_h = \Aa \nab u_h$.

\subsubsection{Hybrid-flux approach}\label{section:hybridfluxapp}

In this approach, also referred to as Element Equilibration Technique (EET), the computation of $\widehat{\bq}_h \in W$ is performed in two steps~\cite{AIN93,LAD83,LAD96,LAD04,PLE11}:
\begin{itemize}
\item[(i)] construction of functions $\widehat{g}_K$ along element edges (with $\widehat{g}_K=g$ on $\Gamma_N$) that should satisfy the so-called element equilibrium equation:
\begin{equation}\label{eq:equilelement}
\forall K \in \mT_h, \quad \int_K f + \int_{\partial K}\widehat{g}_K =0,
\end{equation}
and should be continuous across adjacent element edges (equilibrium of edges). In analogy to mechanical problems, the functions $\widehat{g}_K$ are called equilibrated tractions. We will use the numerical solution ($u_h$, $\bq_h$) to build these $\widehat{g}_K$.
\item[(ii)] local construction of $\widehat{\bq}_{h|K}$ over each element $K$, with prescribed tractions $\widehat{g}_K$ and source term $f$; they should be a solution to the local Neumann problem
\begin{equation}\label{eq:localpbelement}
-\dive \widehat{\bq}_{h|K} = f \quad \text{in $K$}, \qquad \widehat{\bq}_{h|K}\cdot \bn_K = \widehat{g}_K \quad \text{on $\partial K$}.
\end{equation}
\end{itemize}
We now successively detail these two steps.

\medskip

\noindent
\textbf{Step 1: construction of equilibrated tractions $\widehat{g}_K$ on the element edges.} The compatibility (equilibrium) conditions on $\widehat{g}_K$ are a global constraint, involving tractions on boundaries of all elements in the mesh $\mT_h$. Nevertheless, we show below that the computation of the set of $\widehat{g}_K$ can be performed locally, by solving small and independent linear systems of equations.

After ordering the elements, we define the function $\sigma_{\Gamma,K} : \partial K \rightarrow \left\{+1,-1\right\}$ by
\begin{equation} \label{eq:def_sigma_gamma_K}
\sigma_{\Gamma,K}=\left\{
\begin{array}{l}
+1\quad \text{if} \ \Gamma = \overline{K}\cap\overline{J} \ \text{for some element $J<K$}, \\ 
-1 \quad \text{if} \ \Gamma = \overline{K}\cap\overline{J} \ \text{for some element $J>K$}, \\ 
+1 \quad \text{if} \ \Gamma \subset \Gamma_D. 
\end{array}
\right.
\end{equation}
We then construct the tractions $\widehat{g}_K$ in the form $\widehat{g}_K = \sigma_{\Gamma,K} \, \widehat{g}_\Gamma$ where $\widehat{g}_\Gamma$ is a smooth function defined on each element interface $\Gamma$. This ensures that the normal flux is continuous across the edges.

\medskip

We request that $\widehat{\bq}_h$ is a solution to~\eqref{eq:localpbelement} and satisfies the following energy condition (called \textit{strong prolongation condition}): for each element $K$ and each node $i$ connected to $K$,
\begin{equation} \label{eq:prolong}
\int_K(\widehat{\bq}_h-\bq_h) \cdot \nab \varphi_i = 0,
\end{equation}
where $\{ \varphi_i \}$ are the basis functions of the approximation space $V_h^p$. This can be recast as 
\begin{equation}
\label{eq:helas}
\sum_{\Gamma \subset \partial K} \int_\Gamma \sigma_{\Gamma,K} \, \widehat{g}_\Gamma \, \varphi_i = Q_i^K 
\qquad \text{with} \qquad
Q_i^K = \int_K (\bq_h \cdot \nab \varphi_i - f \, \varphi_i).
\end{equation}
Note that, if~\eqref{eq:helas} is satisfied, then, by summing over $i$ and by using the partition of unity property $\dis \sum_i \varphi_{i|K}=1$, we check that~\eqref{eq:equilelement} is satisfied.

We are now left with building $\widehat{g}_\Gamma$ satisfying~\eqref{eq:helas}. For any adjacent elements $K_{\alpha}$ and $K_{\beta}$ (sharing the interface $\Gamma_{\alpha,\beta}$), introduce the quantity
\begin{equation}
  \label{eq:def_b_hat}
\widehat{b}_{\alpha,\beta}(i)
=
\int_{\Gamma_{\alpha,\beta}} \sigma_{\Gamma_{\alpha,\beta},K_\alpha} \, \widehat{g}_{\Gamma_{\alpha,\beta}} \, \varphi_i
=
\int_{\Gamma_{\alpha,\beta}} \widehat{g}_{K_\alpha} \, \varphi_i
=
-\int_{\Gamma_{\alpha,\beta}} \widehat{g}_{K_\beta} \, \varphi_i,
\end{equation}
which is the projection of the traction $\widehat{g}_{\Gamma_{\alpha,\beta}}$ over the FE shape function $\varphi_i$. For each node $i$, the equation~\eqref{eq:helas} can be recast as a linear system involving the quantities $\widehat{b}_{\alpha,\beta}(i)$ for the elements $K_\alpha$ and $K_\beta$ that are connected to node $i$. The number of such elements is limited, and hence the dimension of this linear system is limited. As an illustration, in the 2D case and for a vertex node $i$ which is not on the boundary $\partial \Omega$ (see Figure~\ref{fig:on_tourne}), the system is of the form
\begin{equation}
  \label{eq:helas3}
\begin{array}{rcl}
\widehat{b}_{1,2}(i) - \widehat{b}_{N_i,1}(i) &=& Q_i^{K_1}, \\
\widehat{b}_{2,3}(i) - \widehat{b}_{1,2}(i) &=& Q_i^{K_2}, \\
\ldots &=& \ldots \\
\widehat{b}_{N_i,1}(i) - \widehat{b}_{N_i-1,N_i}(i) &=& Q_i^{K_{N_i}},
\end{array}
\end{equation}
where $N_i$ is the number of elements connected to node $i$ (for other nodes, the system is of a similar form).

\begin{figure}[H]
\begin{center}
\includegraphics[width=60mm]{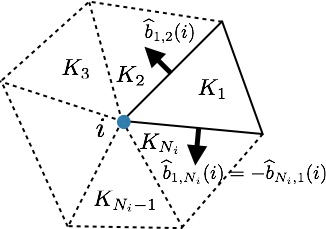}
\end{center}
\caption{Illustration of the prolongation condition~\eqref{eq:prolong} applied to a 2D triangular mesh in the conforming case, yielding a local linear system on traction projections $\widehat{b}_{\alpha,\beta}(i)$ associated with vertex node $i$. \label{fig:on_tourne}}
\end{figure}

With obvious notations, the linear system~\eqref{eq:helas3} can be written in the compact form $\MM \, \widehat{\bb}(i) = \bQ_i$, where $\widehat{\bb}(i)$ and $\bQ_i$ are $N_i$-dimensional vectors and $\MM$ is a $N_i \times N_i$ matrix, the elements of which are 0, $-1$ or $1$. It is easily checked that the kernel of $\MM$ is of dimension 1. The system~\eqref{eq:helas3} has solutions if and only if $\dis \sum_{j=1}^{N_i}Q_i^{K_j}=0$. This property indeed holds because it is exactly the weak formulation of the approximate solution $u_h$ for the test function $\varphi_i$. 

\medskip

Once the projections $\widehat{b}_{\alpha,\beta}(i)$ are computed for all nodes $i$, the tractions $\widehat{g}_\Gamma$ are next built as linear combinations of the basis functions $\varphi_j$. For instance, in a P1 setting, and letting $K_{\alpha}$ and $K_{\beta}$ be the elements sharing the edge $\Gamma_{\alpha,\beta}$ that connects the nodes $\ell$ and $m$, we write
\begin{equation}
\label{eq:combi-lin}
\widehat{g}_{\Gamma_{\alpha,\beta}} = c_\ell \, \varphi_\ell + c_m \, \varphi_m
\end{equation}
and determine the coefficients $c_\ell$ and $c_m$ such that
$$
\begin{cases}
\dis \int_{\Gamma_{\alpha,\beta}} \sigma_{\Gamma_{\alpha,\beta},K_\alpha} \, \left[ c_\ell \, \varphi_\ell + c_m \, \varphi_m \right] \, \varphi_\ell
=
\widehat{b}_{\alpha,\beta}(\ell),
\\ \noalign{\vskip 3pt}
\dis \int_{\Gamma_{\alpha,\beta}} \sigma_{\Gamma_{\alpha,\beta},K_\alpha} \, \left[ c_\ell \, \varphi_\ell + c_m \, \varphi_m \right] \, \varphi_m
=
\widehat{b}_{\alpha,\beta}(m).
\end{cases}
$$

\begin{remark}
\label{rem:underdetermined}
When the node $i$ is not connected to $\Gamma_N$, the associated local system, arising from~\eqref{eq:helas}, is underdetermined. In practice, the solution which is chosen is the one that minimizes a function of the form $\dis \sum_{\alpha,\beta} \frac{(\widehat{b}_{\alpha,\beta}(i)-b^m_{\alpha,\beta}(i))^2}{l_{\alpha,\beta}^2}$, where $l_{\alpha,\beta} = |\Gamma_{\alpha,\beta}|$ and where
\begin{equation}
\label{eq:def_b_m}
b^m_{\alpha,\beta}(i) = \frac{1}{2} \int_{\Gamma_{\alpha,\beta}} \varphi_i \, (\bq_{h|K_{\alpha}}+\bq_{h|K_{\beta}})\cdot \bn_{K_\alpha}
\end{equation}
is the average FE normal flux on $\Gamma_{\alpha,\beta}$. Consider for instance an internal node $i$, for which the system to solve is~\eqref{eq:helas3}. We introduce the following $N_i$-dimensional vectors:
\begin{gather*}
  \widehat{\bX} = \left[ \widehat{b}_{N_i,1}(i),\widehat{b}_{1,2}(i),\dots,\widehat{b}_{N_i-1,N_i}(i) \right]^T,
  \\
  \bX^m = \left[ b^m_{N_i,1}(i),b^m_{1,2}(i),\dots,b^m_{N_i-1,N_i}(i) \right]^T,
  \\
  \bX_0 = \left[ 0,Q_i^{K_1},Q_i^{K_2},\dots,\sum_{j=1}^{N_i-1}Q_i^{K_j} \right] \quad \text{and} \quad \bA=[1,1,\dots,1]^T.
\end{gather*}
Then the set of solutions to~\eqref{eq:helas3} is the set $\left\{ \widehat{\bX} = s \, \bA + \bX_0, \ \ s \in \RR \right\}$, $s$ representing the unknown value $\widehat{b}_{N_i,1}(i)$. Introducing the matrix
$$
\DD=diag\left(l_{N_i,1}^{-2},l_{1,2}^{-2},\ldots,l_{N_i-1,N_i}^{-2}\right),
$$
the cost function mentioned above reads
\begin{multline*}
f(s)=\left(\widehat{\bX}-\bX^m\right)^T \DD \left(\widehat{\bX}-\bX^m\right) \\ = s^2\bA^T\DD\bA-2s\bA^T\DD(\bX^m-\bX_0)+(\bX^m-\bX_0)^T\DD(\bX^m-\bX_0).
\end{multline*}
Its minimization among the solutions to~\eqref{eq:helas3} leads to the choice
$$
s=(\bA^T\DD\bA)^{-1}\bA^T\DD(\bX^m-\bX_0).
$$
We thus notice that $\widehat{\bX}$ is obtained explicitly with very little computational effort. 
\end{remark}

\begin{remark}
Several variants of the above method, which is based on~\eqref{eq:prolong}, have been proposed in the literature~\cite{FLO02,PLE12}. For instance, a ``weak'' prolongation condition may be applied to shape functions associated with non-vertex nodes alone. Tractions are then constructed as $\widehat{g}_K=L+H$ where $H$ (high-degree component) is fully computed from the weak prolongation condition whereas $L$ (low-degree component) is obtained by minimizing a global complementary energy. This procedure is in practice applied in zones with large gradients or large element aspect ratio in order to optimize the estimate. In the other zones, the above method, based on~\eqref{eq:prolong}, is used. 

Another variant is to construct equilibrated tractions using the Partition of Unity Method (PUM)~\cite{LAD10c,PLE11}. This variant provides results similar to those obtained with the method based on~\eqref{eq:prolong}, but is easier to implement in simulation softwares.
\end{remark}

\medskip

\noindent
\textbf{Step 2: local construction of the equilibrated flux $\widehat{\bq}_{h|K}$.} We first consider the specific case when the source term $f$ is a polynomial function. In that case, it is possible to analytically identify a solution $\widehat{\bq}_{h|K}$ to~\eqref{eq:localpbelement} in the form of a polynomial function with a degree consistent with those of $f$ and $\widehat{g}_K$ (recall that $\widehat{g}_K$ is a linear combination of the basis functions in $V_h^p$, in view of~\eqref{eq:combi-lin}). We refer to~\cite{LAD97} for details.

\begin{remark}
  A specific difficulty arises when the above analytical method is used for elasticity problems. Due to the symmetry of the stress tensor, tractions should be compatible at the element vertices (i.e. we should have $\dis \sigma_{\Gamma_1,K} \, \widehat{\bg}_{\Gamma_1} \cdot \bn_2 = \sigma_{\Gamma_2,K} \, \widehat{\bg}_{\Gamma_2} \cdot \bn_1$). However, this compatibility condition is usually not satisfied when the tractions $\widehat{\bg}$ are built as explained in Step~1 above. To circumvent this difficulty, a possibility is to split the element $K$ in sub-elements and to search for an equilibrated stress field as a polynomial function in each sub-element~\cite{LAD04}.
\end{remark}

We next turn to the case of a general right hand side $f$, in which case~\eqref{eq:localpbelement} is going to be numerically solved. Once the tractions $\widehat{g}_K$ have been computed, the best admissible flux $\widehat{\bq}_{h|K}$ is the one that minimizes $\vertiii{\widehat{\bq}-\bq_h}_{q,K}$ among all fluxes $\widehat{\bq}$ satisfying~\eqref{eq:localpbelement}, where $\bq_h = \Aa\nab u_h$ is the numerical flux. Recall indeed (see~\eqref{eq:upper}) that $\vertiii{\widehat{\bq}-\bq_h}_q$ is an upper bound on the error for any $\widehat{\bq} \in W$. 

For any $\widehat{\bq}$ satisfying~\eqref{eq:localpbelement}, we have
\begin{equation*}
\vertiii{\widehat{\bq}-\bq_h}_{q,K}^2 
= 
\vertiii{\widehat{\bq}}^2_{q,K} + \vertiii{\bq_h}_{q,K}^2 - 2 \left[ \int_K f \, u_h + \int_{\partial K} \widehat{g}_K \, u_h \right]
\end{equation*}
and it is thus equivalent to minimize $\vertiii{\widehat{\bq}}^2_{q,K} = 2 J_{2|K}(\widehat{\bq})$ over fluxes $\widehat{\bq}$ satisfying~\eqref{eq:localpbelement}. This problem can be written as the minimization of $\dis \int_K \Aa^{-1} \widehat{\bq} \cdot \widehat{\bq}$ under the constraint that $\dis \int_K \nabla v \cdot \widehat{\bq} - \int_K f \, v - \int_{\partial K} \widehat{g}_K \, v = 0$ for any $v \in H^1(K)$. Introducing the Lagrangian functional
$$
{\cal L}(\widehat{\bq},v) = \int_K \Aa^{-1} \widehat{\bq} \cdot \widehat{\bq} - \left[ \int_K \nabla v \cdot \widehat{\bq} - \int_K f \, v - \int_{\partial K} \widehat{g}_K \, v \right],
$$
we deduce from the saddle-point equation $\dis \frac{\partial {\cal L}}{\partial \widehat{\bq}} = 0$ that the minimizer $\widehat{\bq}_{h|K}$ should satisfy $\Aa^{-1} \widehat{\bq}_{h|K} = \nabla w_K$, where $w_K$ is the Lagrange multiplier. We therefore obtain that $\widehat{\bq}_{h|K} = \Aa \nabla w_K$. The above minimization problem is thus equivalent to finding a function $w_K \in H^1(K)$ such that
\begin{equation}\label{eq:dualelempb}
\forall v \in H^1(K), \qquad B_K(w_K,v)=\int_K f \, v + \int_{\partial K} \widehat{g}_K \, v,
\end{equation}
and then taking $\widehat{\bq}_{h|K} =\Aa \nab w_K$. 

In practice, a numerical approximation of $\widehat{\bq}_{h|K}$ is obtained by solving~\eqref{eq:dualelempb} using an enriched FE method. The basis functions are polynomial functions over the whole element $K$, with a degree $p+k$ (recall that $p$ is the order of the polynomial functions used to discretize $u_h$). A Galerkin approximation of~\eqref{eq:dualelempb} in $V_h^{p+k}(K)$ is thus performed. Orthogonal hierarchical subspaces $\widetilde{V}_{h|K}^q$ (for $q=p+1$, $p+2$, \dots, $p+k$) can be introduced to solve~\eqref{eq:dualelempb} effectively~\cite{AIN00}. The numerical comparisons performed in~\cite{BAB94} show that the approach based on exactly solving~\eqref{eq:dualelempb} and the numerical approach we have just described (approximating~\eqref{eq:dualelempb} in $V_h^{p+k}(K)$) provide similar CRE values (i.e. error bounds) when choosing $k\ge3$, even though the flux fields $\widehat{\bq}_{h|K}$ obtained in the latter approach do not strictly satisfy the equation~\eqref{eq:localpbelement} in its strong form (and therefore do not provide for a mathematically guaranteed upper error bound). We also refer to~\cite{STR92} for similar investigations.

\subsubsection{Flux-free approach}\label{section:fluxfreeapp}

As an alternative to the hybrid-flux approach described in Section~\ref{section:hybridfluxapp}, an admissible flux $\widehat{\bq}_h \in W$ can also be obtained using a flux-free technique~\cite{COT09,GAL09,MOI09,PAR09}. This approach is simpler to analyze and implement than the hybrid-flux approach since it circumvents the necessity of constructing equilibrated tractions on element edges (the quantities $\widehat{g}_K$ in the above approach, which are boundary conditions for the local problems). Here the local boundary conditions appear naturally. However, the flux-free approach requires to solve local problems over patches of elements (in contrast to the problem~\eqref{eq:dualelempb}, set over a single element) and is therefore more expensive. For instance, the local problem complexity is at least 3 times larger in 2D~\cite{CHO04}.

\medskip

We recall that $\Omega_i$ is the patch of elements associated with the vertex $i$. Introduce the space
\begin{equation*}
W_{\Omega_i} = \left\{ 
\begin{array}{c}
\bp \in H(\dive,K) \ \ \text{and} \ \ \dive \bp + r_K \, \phi_i =0 \ \ \text{on each $K \subset \Omega_i$}, 
\\ \noalign{\vskip 3pt}
\bp \cdot \bn = 0 \ \text{on $\partial \Omega_i \setminus (\Gamma_D \cup \Gamma_N)$}, 
\quad
\bp \cdot \bn = - t_\Gamma \, \phi_i \ \text{on $\partial \Omega_i \cap \Gamma_N$},
\\ \noalign{\vskip 3pt}
[[ \bp\cdot \bn ]] = - t_\Gamma \, \phi_i \ \text{on $\Gamma_{\rm int} \subset \Omega_i$}
\end{array}
\right\},
\end{equation*}
where $r_K$ and $t_\Gamma$ are defined by~\eqref{eq:def_rK_tGamma}, $[[ \ \cdot \ ]]$ is the jump across an edge and where $\{ \phi_i \}$ are the piecewise affine basis functions. Note that we do not impose any condition on $\bp \cdot \bn$ on $\partial \Omega_i \cap \Gamma_D$. For a vertex $i$ inside $\Omega$, the function $\phi_i$ vanishes on $\partial \Omega_i$, and thus the boundary condition on $\partial \Omega_i \cap \Gamma_N$ simply reads $\bp \cdot \bn = 0$. However, for a vertex $i$ on the boundary of $\Omega$, the function $\phi_i$ does not identically vanish on $\partial \Omega_i$.

A finite dimensional subspace $\widetilde{W}_{\Omega_i} \subset W_{\Omega_i}$ is next introduced and the following local problem is solved:
\begin{equation}\label{pbfluxfreedual}
\widehat{\bp}_i = \text{argmin} \left\{ J_{2|\Omega_i}(\bp), \quad \bp \in \widetilde{W}_{\Omega_i} \right\}.
\end{equation}
We then set $\widehat{\bq}_h = \bq_h + \sum_i \widehat{\bp}_i$ which is an element of $W$ (in view of the definition of $W_{\Omega_i}$). In practice, $\widetilde{W}_{\Omega_i}$ is defined using polynomial functions on each element, and constraints associated with this space are imposed using Lagrange multipliers. 

\begin{remark}
To solve~\eqref{pbfluxfreedual} in practice, it may be convenient to change variable and to work with $\widehat{\bp}^\ast_i = \widehat{\bp}_i + \phi_i \, \Aa\nab u_h$ rather than $\widehat{\bp}_i$. The field $\widehat{\bp}^\ast_i$ should then satisfy
\begin{gather*}
\dive \widehat{\bp}^\ast_i + f \phi_i - \Aa\nab u_h \cdot \nab \phi_i = 0 \ \ \text{on each $K \subset \Omega_i$}, \\
\widehat{\bp}^\ast_i\cdot \bn = 0 \ \ \text{on $\partial \Omega_i \setminus (\Gamma_D \cup \Gamma_N)$}, \qquad
\widehat{\bp}^\ast_i\cdot \bn = g \, \phi_i \ \ \text{on $\partial \Omega_i \cap \Gamma_N$}, \\
[[ \, \widehat{\bp}^\ast_i \cdot \bn \, ]] = 0 \ \ \text{on $\Gamma_{\rm int} \subset \Omega_i$}.
\end{gather*}
We eventually set $\widehat{\bq}_h = \sum_i \widehat{\bp}^\ast_i$, and we have $\widehat{\bq}_h \in W$. 
\end{remark}

In~\cite{PAR17}, a variant of the original flux-free technique was investigated, with the goal of substantially reducing the computational cost while retaining a good quality of the bounds. 

\subsubsection{Full polynomial construction using a dual mesh}

We now describe yet another method to build equilibrated flux fields $\widehat{\bq}_h \in W$ (see~\cite{ERN15} for an overview), based on a mixed formulation and the use of Raviart-Thomas-N\'ed\'elec (RTN) elements~\cite{BOF13}. This method has been used in e.g.~\cite{DES99,LUC05,ERN10,VOH08,VOH11}, where a simple construction of $\widehat{\bq}_h$ is proposed using local computations (element by element construction) and low-order RTN elements.

\medskip

Assume that $u_h$ is a piecewise affine function and that $\dive \Aa \nab u_h = 0$ in each element $K$ (this is for instance the case when $\Aa$ is constant in each $K$). Changing of unknown function, consider $\widehat{\bp}_K = \widehat{\bq}_{h|K}-\bq_{h|K}$ over each element $K$. We wish to build $\widehat{\bp}_K$ such that
\begin{equation}
\label{eq:helas2}
\dive \widehat{\bp}_K + f = 0 \quad \text{in $K$}.
\end{equation} 
We decompose $f$ in $K$ as
\begin{equation*}
f = \overline{f}_K + \delta f \qquad \text{with} \qquad \overline{f}_K = \frac{1}{K} \int_K f \qquad \text{and} \qquad \int_K \delta f = 0. 
\end{equation*}
Denoting by $\bx_K$ the geometric center of $K$, we observe that
\begin{equation*}
\widehat{\bp}_K = - \frac{\bx-\bx_K}{d} \ \overline{f}_K + \rott \boldsymbol{\psi}_K + \nab \tau_K 
\end{equation*}
is a solution to~\eqref{eq:helas2}, where $\boldsymbol{\psi}_K$ is an arbitrary function in $\left( H^1(K) \right)^d$ and $\tau_K \in H^1(K)$ is such that $-\Delta \tau_K = \delta f$. We complement the previous equation on $\tau_K$ by homogeneous Neumann boundary conditions $\nab \tau_K \cdot \bn = 0$ on $\partial K$ as well as the condition $\dis \int_K \tau_K = 0$. Since the integral over $K$ of $\delta f$ vanishes, the above problem is well-posed and $\tau_K$ is well-defined. An analytical solution can be found using the Green kernel.

\begin{remark}
Note that $\tau_K = 0$ when $f$ is constant in $K$. Furthermore, if $f\in H^1(K)$, we have, using the Poincar\'e Wirtinger inequality on $f$ and $\tau_K$, that $\|\delta f\|_{0,K} \le Ch \|f\|_{1,K}$, which implies that $\|\nab \tau_K\|_{0,K} \le Ch^2 \|f\|_{1,K}$. We hence see that, for small $h$, the quantity $\nab \tau_K$ is small in comparison to the error $e$ (which is expected to be of the order of $h$ since $u_h$ is a piecewise affine function), so that the computation of $\tau_K$ may not be needed.

More generally, if $f \not\in \oplus_K H^1(K)$, we see that the computation of $\tau_K$ is needed only in the elements $K$ where $f$ does not belong to $H^1(K)$.
\end{remark}

The choice of $\boldsymbol{\psi}_K$ is guided by the fact that normal fluxes of the field $\widehat{\bq}_{h|K} = \widehat{\bp}_K + \bq_{h|K}$ must be continuous. In view of the homogeneous Neumann boundary condition on $\tau_K$, the set of functions $\boldsymbol{\psi}_K \in \left( H^1(K) \right)^d$ should satisfy, on all element edges,
\begin{equation} \label{eq:mathieu4}
[[ \rott \boldsymbol{\psi}_K ]] \cdot \bn + \left[\left[ \bq_{h|K} - \frac{\bx-\bx_K}{d} \, \overline{f}_K \right]\right] \cdot \bn = 0.
\end{equation}
Following~\cite{DES99b} and restricting our presentation to the two-dimensional case, we observe that $\rott \boldsymbol{\psi}_K \cdot \bn_K = - \nab_{\bs_K} \Psi_K$, where $\bn_K$ is the outgoing normal vector to $\partial K$, $\bs_K$ is the curvilinear coordinate along $\partial K$ (oriented in the trigonometric, counterclockwise sense) and $\Psi_K \in H^1(K)$ is such that $\dis \rott \boldsymbol{\psi}_K = \left( \begin{array}{c} \partial_2 \Psi_K \\ -\partial_1 \Psi_K \end{array} \right)$. A possibility to ensure~\eqref{eq:mathieu4} is to choose $\Psi_K \in H^1(K)$ such that, along each edge in $\partial K$,
\begin{equation} \label{eq:mathieu5}
\nab_{\bs_K} \Psi_K = \bq_{h|K} \cdot \bn_K - \frac{(\bx-\bx_K) \cdot \bn_K}{d} \, \overline{f}_K.
\end{equation}
A necessary condition for the existence of such a function $\Psi_K$ is of course that the integral over $\partial K$ of $\bq_{h|K} \cdot \bn_K - (\bx-\bx_K)\cdot \bn_K \, \overline{f}_K/d$ vanishes, because we need to ensure that $\dis \int_{\partial K} \nab_{\bs_K} \Psi_K = 0$. To that aim, the function $u_h$ (and thus $\bq_h$) may have to be locally modified (see~\cite[p.~207]{DES99b}). Under that assumption, and noticing that the right-hand side of~\eqref{eq:mathieu5} is constant on each edge, there exists a unique affine function $\Psi_K$ with vanishing average over $K$ and satisfying~\eqref{eq:mathieu5} (the expression of which can of course be given in closed form). We eventually observe that, up to the contribution $\nabla \tau_K$, the function $\widehat{\bp}_K$ (and also $\widehat{\bq}_h$ if $\Aa$ is piecewise constant) belongs to the lowest-order RTN element.

\begin{remark}
Higher-order RTN finite-dimensional subspaces of $H(\dive,\Omega)$ can also be constructed. They are useful to construct equilibrated fields for elasticity problems~\cite{ARN84,BRA08,NIC08}, where the situation is more difficult due to the symmetry of the stress tensor. They are denoted $RTN_k(\mT_h)$ and defined as
\begin{equation*} 
RTN_k(\mT_h) = \{\bp_h \in H(\dive,\Omega); \quad \bp_{h|K} \in RTN_k(K) \ \ \text{for any $K$} \},
\end{equation*}
where $RTN_k(K) = [P_k(K)]^d +\bx \, P_k(K)$. In particular, any function $\bp_h \in RTN_k(\mT_h)$ is such that $\dive \bp_h \in P_k(K)$ for all $K$ and $\bp_h \cdot \bn \in P_k(\Gamma)$ for all edges $\Gamma$.
\end{remark}

\subsection{Numerical illustration}\label{section:illustration}

We report in this section on a numerical experiment borrowed from~\cite{PLE11}. A linear elasticity problem is considered on a three-dimensional physical domain (which represents the hub of the main rotor of an helicopter). The mechanical structure is clamped on part of its boundary, and is subjected to a unit traction force density $\boldsymbol{g}$, normal to the boundary surface, on another part of its boundary. The loading plane is not exactly orthogonal to the main axis of the structure. The considered geometry and mesh, made of 1978 linear tetrahedral elements (17694 dofs), are shown in Figure~\ref{fig:3Dmesh}. 

\begin{figure}[H]
\begin{center}
\includegraphics[width=120mm]{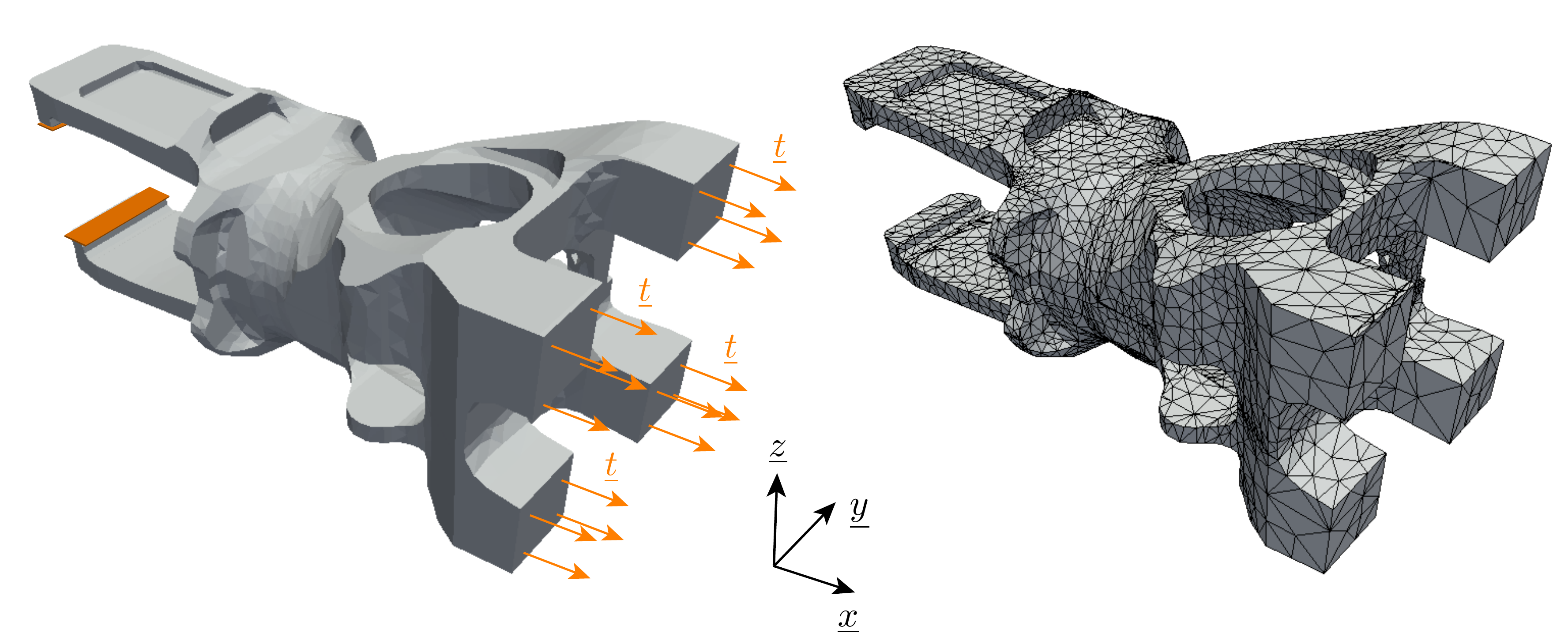}
\end{center}
\caption{Model problem (left) and associated FE mesh (right). The colored planes represent clamped boundary parts. \label{fig:3Dmesh}}
\end{figure}

A conforming FEM using a direct solver with Cholesky factorization is used to compute an approximate solution. In this example, a specific region of the structure plays an essential role for design purposes and engineering interest. The magnitude (in Frobenius norm) of the FE stress field in this region is shown on Figure~\ref{fig:3Dstress}.

\begin{figure}[H]
\begin{center}
\includegraphics[width=100mm]{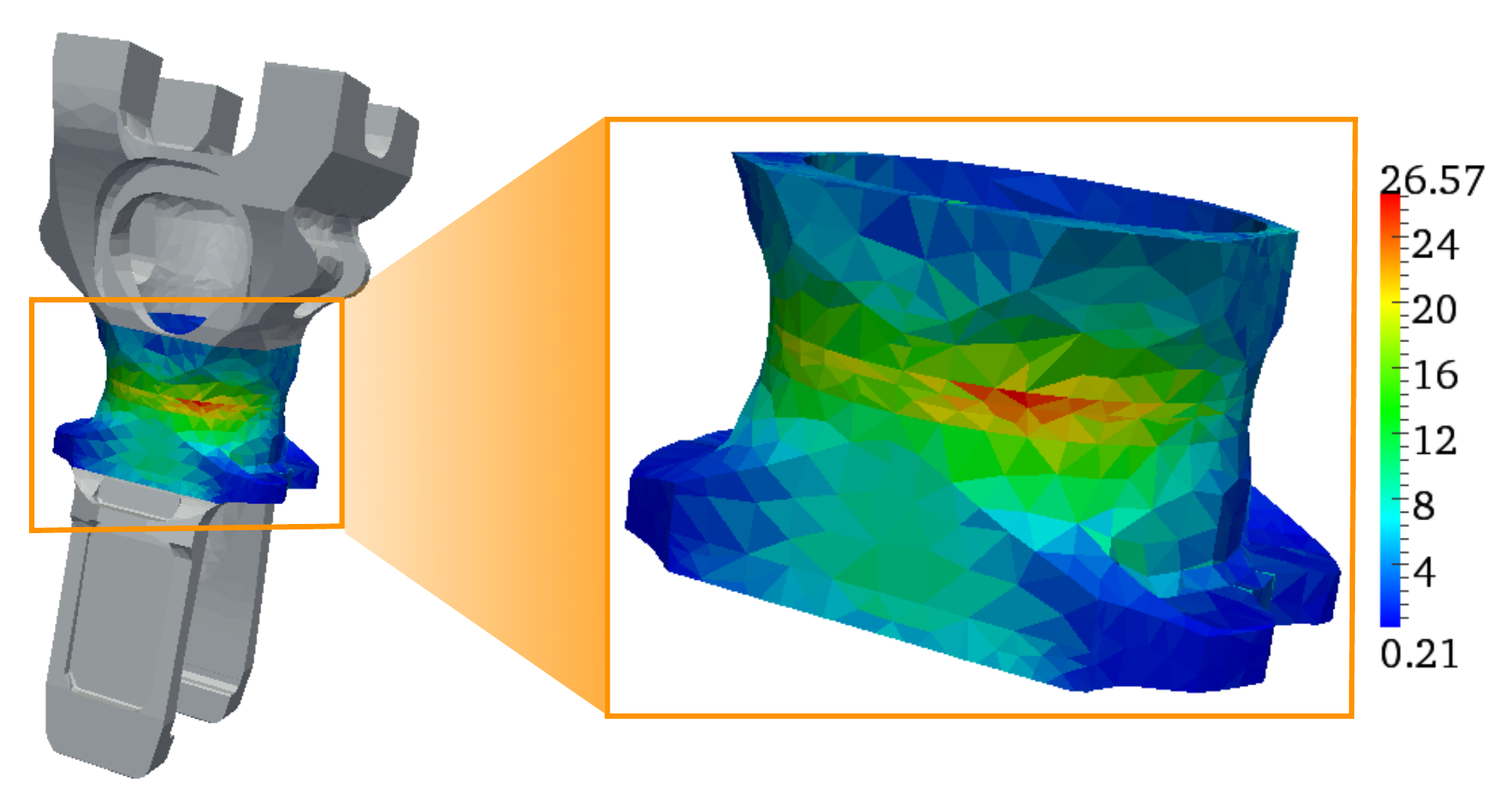}
\end{center}
\caption{Magnitude of the FE stress field in the region of interest. \label{fig:3Dstress}}
\end{figure}

A posteriori error estimation is performed using the CRE concept presented in Section~\ref{section:CRE}. The magnitude of the admissible stress field, obtained using hybrid-flux equilibration techniques, as well as the spatial distribution of the obtained CRE error estimate, are shown on Figure~\ref{fig:3Derror}.

\begin{figure}[H]
\begin{center}
\includegraphics[width=50mm]{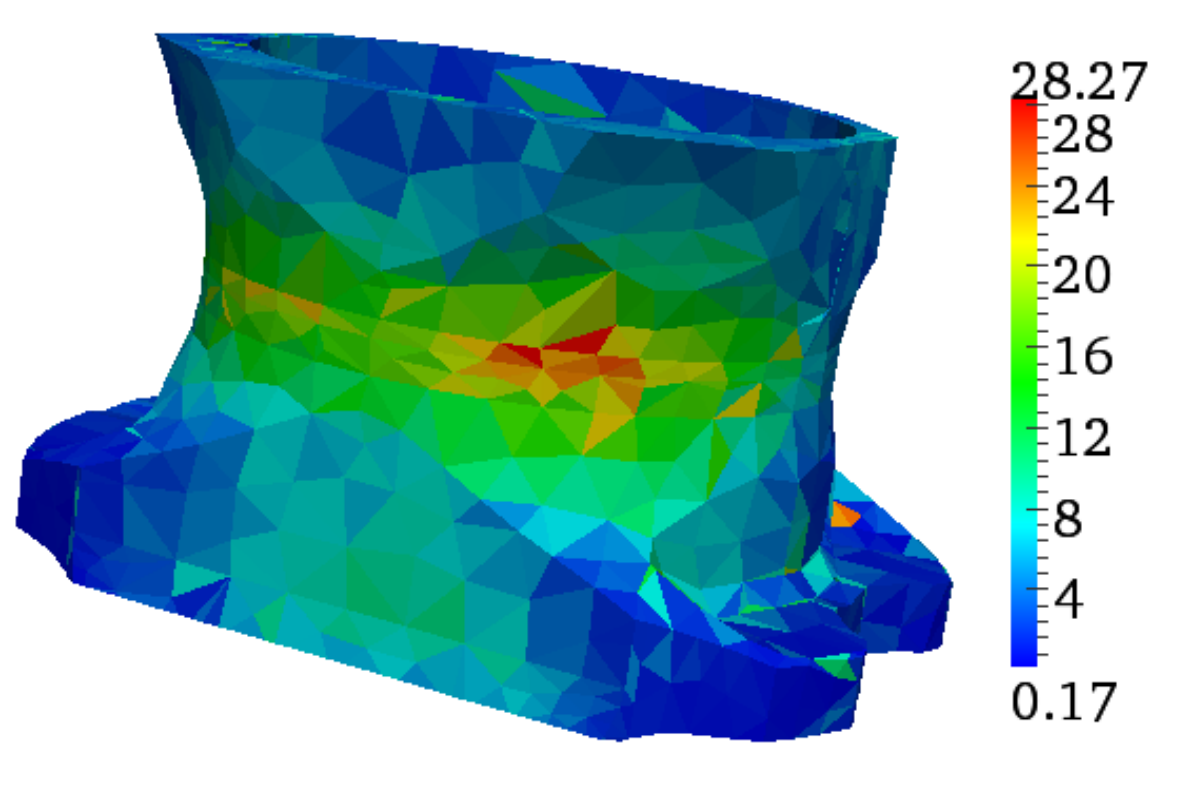} \hspace{5.em}
\includegraphics[width=50mm]{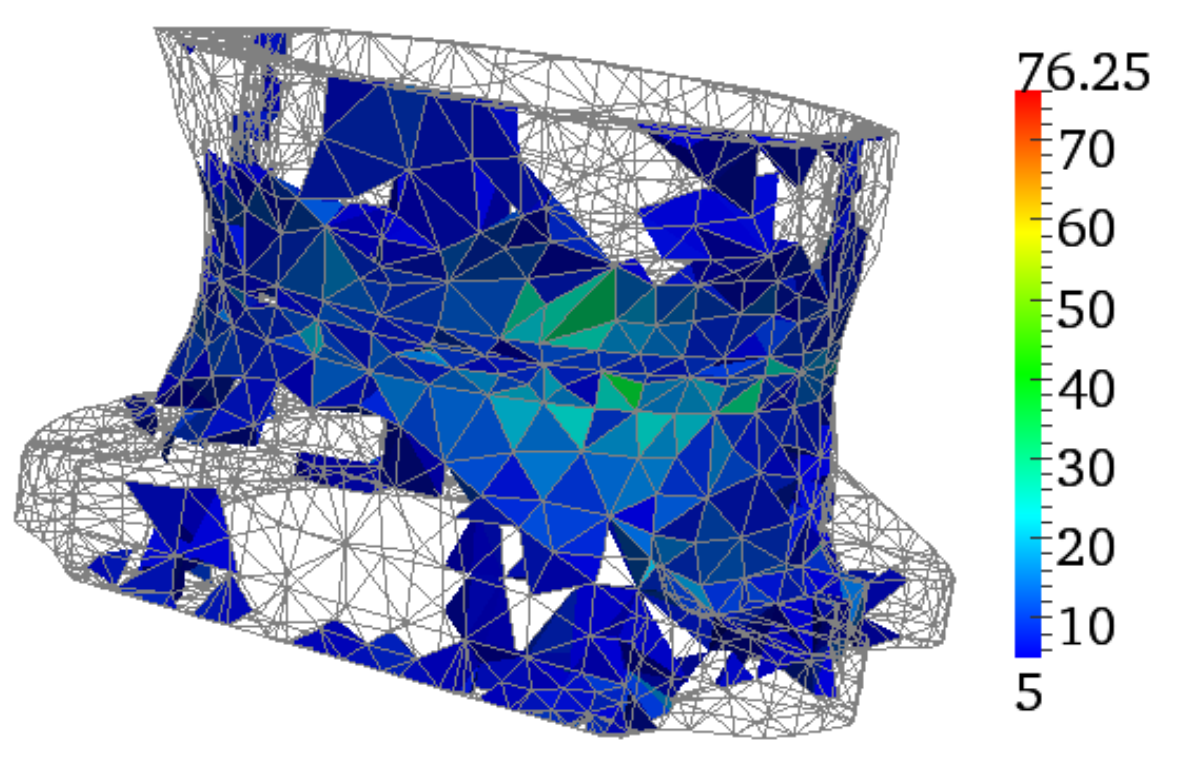}
\end{center}
\caption{Magnitude of the admissible stress field (left) and spatial distribution of relevant local contributions to the error estimate (right). \label{fig:3Derror}}
\end{figure}

\section{Unified point of view on \textit{a posteriori} error estimation methods}\label{section:connexions}

Among the error estimation methods presented in Sections~\ref{section:recovery}, \ref{section:residuals} and~\ref{section:CRE}, and even though the philosophies behind these methods are pretty different, connections can be easily found around the concept of equilibrium (see~\cite{CAR12} where such a unified perspective is also described). In the duality-based approach, this concept naturally appears to be fundamental to recover, from a statically admissible flux field $\widehat{\bq}$, an upper bound on the global error $\vertiii{e}$ measured in the energy norm (see Section~\ref{section:dualapp}).

This link between equilibrium and guaranteed upper error bounds is also exhibited in advanced recovery methods (see Section~\ref{section:advancedZZ}): choosing in~\eqref{eq:boundZZ} a field $\bq_h^\ast$ that is fully equilibrated leads to $\dive \bq^\ast_h + f = 0$ and yields an upper error bound $\vertiii{\bq_h^\ast-\bq_h}_q$ which is equivalent to the one given by the constitutive relation error functional.

\medskip

In addition, it is noticeable that the constitutive relation functional can also be understood from the residual functional. Indeed, for any flux $\bp \in H(\dive,\Omega)$ and any $v \in V$, we have
\begin{equation*} 
R(v) = B(e,v) = \intO (\bq-\bq_h) \cdot \nab v = \intO (\bp-\bq_h) \cdot \nab v + \intO (f + \dive \bp) \, v + \int_{\Gamma_N} (g-\bp\cdot \bn) \, v,
\end{equation*}
with $\bq = \Aa \nab u$ and $\bq_h = \Aa \nab u_h$. Writing the above equation for $\bp = \widehat{\bq} \in W$ and $v = e$, we get
\begin{equation*}
R(e) = \vertiii{e}^2 = \intO (\widehat{\bq}-\bq_h) \cdot \nab e \leq \vertiii{\widehat{\bq}-\bq_h}_q \, \vertiii{e},
\end{equation*}
and thus $\vertiii{e} \le \vertiii{\widehat{\bq}-\bq_h}_q = \sqrt{2} \, E_{\rm CRE}(u_h,\widehat{\bq})$. Consequently, strong similarities can be found between duality-based methods and some of the implicit residual methods (i.e. those providing upper error bounds) described in Section~\ref{sec:implicit}. We illustrate these similarities for hybrid-flux equilibration techniques first, and second for flux-free techniques. 

\medskip

First, the hybrid-flux equilibration technique presented in Section~\ref{section:hybridfluxapp} is used to construct tractions $\widehat{g}_\Gamma$ in the equilibration procedure of the element residual method (see Section~\ref{section:elementresidualmeth}), in which the local problem to solve over each element $K$ reads (see~\eqref{eq:retour3})
\begin{multline*}
\text{Find $\widehat{e}_K \in V(K)$ such that, for any $v \in V(K)$,} \\ B_K(\widehat{e}_K,v) = \int_K r_K \, v + \sum_{\Gamma \subset \partial K \setminus \Gamma_D} \int_\Gamma \widehat{R}_{\Gamma,K} \, v, 
\end{multline*}
with $r_K = f + \dive (\Aa\nab u_h)_{|K}$ and $\widehat{R}_{\Gamma,K} = \sigma_{\Gamma,K} \, \widehat{g}_\Gamma - \Aa \nab u_{h|K} \cdot \bn_K$. Consequently, and except for elements $K$ connected to $\Gamma_D$, the problem~\eqref{eq:retour3} is similar to the one given in~\eqref{eq:dualelempb}. In other words, from the solutions $\widehat{e}_K$ to~\eqref{eq:retour3}, an equilibrated flux field $\bq_h^\ast$ such that $\bq_{h|K}^\ast=\Aa\nab (\widehat{e}_K+u_{h|K})$ is obtained. In practice, solving~\eqref{eq:retour3} or~\eqref{eq:dualelempb} with an enriched FE method (polynomial functions of degree $p+k$) leads to the same error bounds. Alternatively, introducing the local space
\begin{equation*}
W_K=\{\bp \in H(\dive,K), \quad \dive \bp + r_K = 0 \; \text{in $K$}, \quad \bp \cdot \bn = \widehat{R}_{\Gamma,K}\; \text{on $\Gamma \subset \partial K$} \},
\end{equation*}
an equilibrated flux field $\widehat{\bq}_h \in W$ can be constructed as $\widehat{\bq}_{h|K}=\widehat{\bp} + \Aa\nab u_{h|K}$ with $\widehat{\bp} \in W_K$~\cite{SAU04}. This is the approach followed when solving~\eqref{eq:retour3} with a dual approach, or when solving~\eqref{eq:localpbelement} with polynomial flux functions.

\medskip

Second, problems~\eqref{pbfluxfreedual} stemming from the flux-free technique presented in Section~\ref{section:fluxfreeapp} are the dual problems to the local problems~\eqref{eq:locpbparchphii2}, that are to be solved over each patch in the subdomain residual method. Solving~\eqref{eq:locpbparchphii2} with an enriched FE discretization (we denote $\widetilde{z}_i$ its solution) provides for a flux $\sum_i \Aa \nab \widetilde{z}_i + \bq_h$ which is (up to the small numerical error introduced when solving~\eqref{eq:locpbparchphii2}) statically admissible.


\section{Mesh adaptation}\label{section:adaptivity}

Motivated by the fact that computational resources are limited, mesh-adaptive finite element algorithms have been developed. The adaptive procedures aim at automatically refining (or coarsening) a mesh and/or the FE basis (using polynomial functions of various degrees) to obtain a numerical solution having a specified accuracy while requiring the smallest possible computational cost~\cite{DIE99,DOR96,ONA93,VER94}. They may lead to non-conforming discretizations. They are usually based on local contributions $\eta_K$ of \textit{a posteriori} error estimates, even though other indicators providing guidance on where refinement should occur may be considered (the use of solution gradients is popular in fluid dynamics where error estimates are not readily available). Even though adaptive procedures have been applied for a long time, optimal strategies for deciding where and how to refine or change the basis are rare, and proofs of the optimality of enrichment in the vicinity of the largest error contributions have been investigated only recently~\cite{BIN04,CAS08,MOR02,STE07}.

\medskip

By far, $h$-adaptivity is the most popular strategy (see~\cite{FUE96} for an overview and comparison between various $h$-adaptivity strategies). Conserving element type and interpolation, its interest comes from the fact that it can increase the asymptotic convergence rate, particularly when singularities are present. Indeed, \textit{a priori} error analysis shows that a FE method based on polynomial functions of degree $p$ used on a uniform mesh of size $h$ converges as $\vertiii{e} \le C_1(u,p) \, h^{\min(p,q)} = C_2(u,p) \, N^{-\min(p,q)/d}$ where $q>0$ depends on the solution smoothness ($u$ should belong to $H^{q+1}(\Omega)$ for the estimate to hold) and $N$ is the number of degrees of freedom. In contrast, the optimal adaptive $h$-refinement restores the optimal convergence $\vertiii{e} \le C_1(u,p) \, h^p = C_2(u,p) \, N^{-p/d}$.

The simplest $h$-adaptivity approach is $h$-refinement that consists in dividing some selected elements, keeping other elements unchanged (as performed for instance in quadtree/octree meshes). The adaptive process is based on the cycle
\begin{equation*}
\boxed{
\text{solve} \rightarrow \text{estimate} \rightarrow \text{mark} \rightarrow \text{refine}
}
\end{equation*}
and refines a fixed percentage of elements having the largest error indicators. According to a marking threshold of the form $\dis \theta= \lambda \max_j \eta_j$, where $\lambda \in [0,1]$ is a parameter (a typical choice is $\lambda=0.8$), elements $K$ such that $\eta_K \ge \theta$ are marked. The refinement level (namely the number of small elements in which a selected element is divided) is then determined trying to equi-distribute discretization error contributions among the new elements. To that aim, the total number of elements in the refined mesh needs to be estimated. This is done from asymptotic convergence rate and prescribed error tolerance $\epsilon_0$. A drawback of the $h$-refinement method is that it may lead to non-conforming meshes with hanging nodes.

Another more flexible approach, which also enables for mesh unrefinement, is $h$-remeshing. In this approach, the mesh is totally rebuilt following a remeshing map based on \textit{a posteriori} error estimates. It thus requires a mesh generator able to respect a given remeshing map, which is the main drawback of this approach. To build this new mesh, one possibility is to set the problem as a constrained minimization, in order to define an optimal mesh minimizing the number of elements while ensuring a given accuracy tolerance $\epsilon_0$.

Denoting $r_K=h_K^\star/h_K$ the local refinement ratio in the element $K$, where $h_K^\star$ is the size of the small elements $K^\star \subset K$ after refinement, and noticing that $\eta_K^\star/\eta_K \approx r_K^{\alpha_K}$ with $\dis \eta_K^\star = \sqrt{\sum_{K^\star \subset K} (\eta_{K^\star})^2}$ and $\alpha_K$ the local convergence rate predicted by \textit{a priori} error estimation, the problem reads as
\begin{equation}
  \label{eq:cons-min}
\min_{r_K} \ \sum_{K \in \mT_h} \frac{1}{r_K^d} \quad \text{under the constraint} \quad \sum_{K \in \mT_h} (\eta_K^\star)^2 = \sum_{K \in \mT_h} r_K^{2\alpha_K} \, \eta_K^2 = \epsilon_0^2.
\end{equation}
When the exact solution is smooth, we have $\alpha_K=p$ for all elements $K$, and the constrained minimization problem can be solved analytically. It yields 
\begin{equation*}
r_K = \frac{\epsilon_0^{1/p}}{\eta_K^{2/(2p+d)}\left[\sum_{K \in \mT_h} \eta_K^{2d/(2p+d)}\right]^{1/2p}}.
\end{equation*}
If the exact solution is not smooth enough, then $\alpha_K<p$ in some given zones. The standard strategies then consist in first empirically determining the rates $\alpha_K$, and next numerically solving the constrained minimization problem~\eqref{eq:cons-min} (e.g. by using an iterative algorithm). It can be shown that this $h$-remeshing procedure leads to an evenly-distributed accuracy of the approximate solution over the whole computational domain.

\medskip

As an illustration, we show in Figure~\ref{fig:meshref} some adaptivity results obtained on a two-dimensional mechanical structure when using $h$-remeshing. The obtained sequence of meshes enables one to progressively decrease the relative (global, i.e. in energy norm) discretization error, starting from $73\%$ and reaching $5\%$.

\begin{figure}[H]
\begin{center}
\includegraphics[width=110mm]{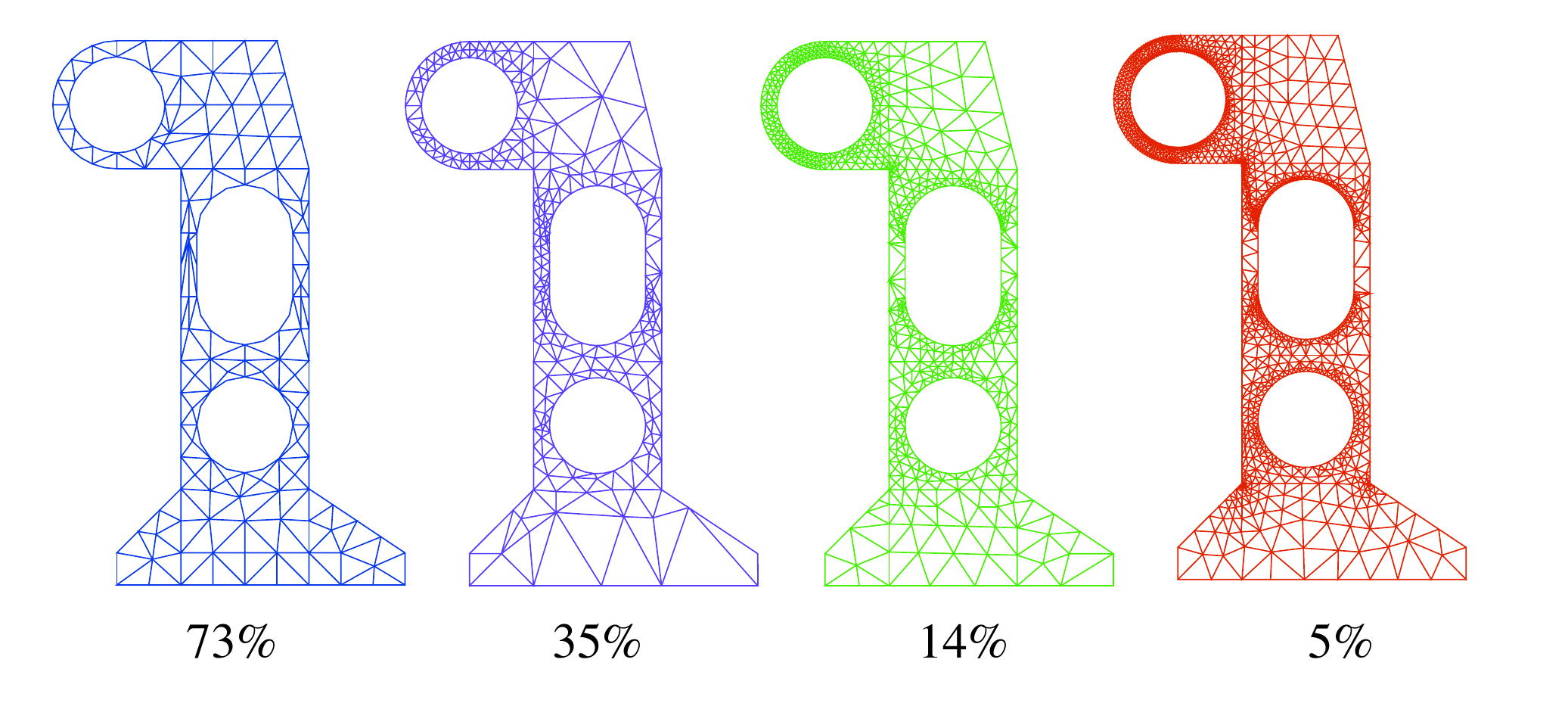}
\end{center}
\caption{Sequence of meshes using a $h$-remeshing procedure, with values of the relative error estimate. \label{fig:meshref}}
\end{figure}

Another possibility for adaptivity, which is more powerful when the solution $u$ is smooth, is to perform $p$-refinement. If the solution $u$ is $C^\infty$, then it allows for an exponential convergence rate. Otherwise the convergence rate is limited by the solution smoothness. Again, this may yield a non-conforming discretization space. 

\medskip

Of course, $hp$-refinement strategies, combining $h$ and $p$ refinements, are also possible. They allow to recover an exponential convergence rate $\vertiii{e} \le Ce^{-q_1 N^{q_2}}$, where $q_1$ and $q_2$ are positive constants that depend on the smoothness of $u$ and on the FE mesh, even when singularities are present. This idea is (i) to increase the degree of polynomial basis functions in regions where the solution $u$ is smooth and (ii) to decrease $h$ near singularities of $u$.

\begin{remark}
Though being well understood and available in various academic codes, adaptive methods with \textit{a posteriori} error control have so far not become fully widespread in commercial softwares. One of the reasons for this shortcoming is probably the complex task of mesh generation, in particular for 3D domains where the practitioners have little trust in automatic refinement procedures.
\end{remark}


\section{Goal-oriented error estimation}\label{section:goaloriented}

In practical applications, scientists and engineers are often interested in errors on some specific outputs of the computation, i.e. quantities of interest $Q(u)$ which are functionals of the solution $u$ (typical examples include the average of the normal flux $q_n = \bq \cdot \bn = (\Aa\nab u) \cdot \bn$ over a part $\Gamma_Q \subset \partial \Omega$ of the boundary of the domain, or the average of the solution $u$ in a local region of interest $\omega_Q \subset \Omega$). In such cases, energy-norm driven error estimation and mesh adaptation tools (presented in the above sections) fail to provide the required accuracy in the chosen quantity of interest using limited computational resources. We show in the following that, again using these tools but complementing them with additional information, goal-oriented error estimation can be performed and adaptive techniques can be extended to that setting. We refer to~\cite{AIN12,BAB84,CAO03,CIR98,GRA05,LAD99b,MOZ15,ODE01,OHN01,PAR97,PAR06b,PER98,PRU99,RAN97,STR00} for more details on what we present below. Such strategies are highly beneficial compared to global error estimation and the associated mesh adaptivity, since complex features of the solution on some parts of the domain might not influence the quantity of interest (and thus do not need to be accurately captured by the numerical solution). 

\medskip

In this section, we consider a linear and continuous functional $Q: V \to \RR$, and we define from this a quantity of interest $Q(u)$ (depending linearly on $u$). In practice, the functional $Q$ is often defined in a global manner as
\begin{equation} \label{eq:rep_Q}
Q(v) = \intO \left( \widetilde{f}_Q \, v + \widetilde{\bq}_Q \cdot \nab v \right) + \int_{\Gamma_N} \widetilde{g}_Q \, v + \intO \Aa \nab \widetilde{u}_Q \cdot \nab v,
\end{equation}
where $\left(\widetilde{f}_Q, \widetilde{\bq}_Q, \widetilde{g}_Q, \widetilde{u}_Q\right)$ is a set of extraction functions which can be mechanically interpreted in an adjoint problem (see below) as body, pre-flux, traction, and pre-primal field loadings, respectively. These are defined explicitly or implicitly, depending on the quantity $Q$. The last extraction function $\widetilde{u}_Q$, vanishing on $\Gamma_N$, is a regular field that enables one to extract components of the normal flux $\bq \cdot \bn$ on $\Gamma_D$ (reaction forces) by imposing a non-homogeneous Dirichlet condition. More precisely, considering momentarily the case when $\widetilde{f}_Q$, $\widetilde{\bq}_Q$ and $\widetilde{g}_Q$ vanish, evaluating the quantity of interest on the solution $u$ to~\eqref{eq:refpbstrong} and using that $\widetilde{u}_Q$ vanishes on $\Gamma_N$, we compute that $\dis Q(u) = \intO \Aa \nab \widetilde{u}_Q \cdot \nab u = \int_{\Gamma_D} \widetilde{u}_Q \, \bq \cdot \bn + \intO \widetilde{u}_Q \, f$. Up to the second term in $Q(u)$ (which is independent of the solution), we indeed see that $\widetilde{u}_Q$ allows to extract components of the normal flux $\bq \cdot \bn$ on $\Gamma_D$.

\medskip

The quantity of interest $Q(u)$ can be written $\langle Q, u \rangle$ where $Q$ belongs to the dual space $V^\star$ of $V$. Consequently, we have
\begin{equation*} 
\left|Q(u)-Q(u_h)\right| = |Q(e)| \le \| Q \|_\star \ \vertiii{e},
\end{equation*}
where $\dis \|Q\|_\star = \sup_{v\in V, \, \vertiii{v}=1} |Q(v)|$ is the dual norm of $Q$. A target accuracy $\epsilon$ for the error on $Q(u)$ can thus be achieved by ensuring that $\vertiii{e} \le \epsilon/ \|Q\|_\star$. However, such a method based on a mesh adaptation with respect to the error in the energy norm usually leads to an overestimation (and requires useless computational effort) since norm approximation does not take the locality of $Q$ into account. We show in the following, using duality arguments, that a target accuracy $\epsilon$ for the error on $Q(u)$ can be achieved by only requesting that $\vertiii{e}$ is of the order of $\sqrt{\epsilon}$.

\subsection{Adjoint problem and error indicator}

Considering in a general setting the problem written in its weak form~\eqref{eq:refpb}, and in order to give an exact representation of the error $Q(u)-Q(u_h)=Q(e)$, the following adjoint problem is introduced~\cite{BEC01,GIL02}:
\begin{equation}
\label{eq:adjoint}
\text{Find $\widetilde{u} \in V$ such that, for any $v \in V$,}\quad B^\ast(\widetilde{u},v) = Q(v),
\end{equation}
where $B^\ast(u,v)$ is the formal adjoint of the primal form $B(u,v)$, defined by $B^\ast(u,v)=B(v,u)$. In the current symmetric case, the forms $B$ and $B^\ast$ are identical. We also define the adjoint flux $\widetilde{\bq}=\Aa \nab \widetilde{u}$.

\medskip

The adjoint solution $\widetilde{u}$ yields the exact representation (primal-dual equivalence) of the error on $Q(u)$:
\begin{equation*}
Q(e) = B^\ast(\widetilde{u},e) = B(e,\widetilde{u}) = R(\widetilde{u}) = \langle R_{|u_h},\widetilde{u}\rangle.
\end{equation*}
This shows that $\widetilde{u}$ provides for the sensitivity of the discretization error on $Q(u)$ to the local sources of discretization errors in the whole domain $\Omega$. The adjoint solution, often termed generalized Green's function~\cite{EST05}, conveys the locality of the targeted information. It indicates the influence of parts of the primal solution away from the spatial location of the target functional. 

\begin{remark}
In the case of a nonlinear quantity of interest, the general approach consists in performing a first order linearization $Q(u)-Q(u_h) \approx Q'_{|u_h}(e)$ and therefore considering $Q'_{|u_h}(v)$ as the right-hand side of the adjoint problem~\eqref{eq:adjoint}. Another approach that avoids linearization and thus keeps strict error bounds for nonlinear quantities of interest is presented in~\cite{LAD10a}.
\end{remark}

A classical numerical approach then consists in computing an approximate solution $\widetilde{u}_h$ of $\widetilde{u}$ using the reference mesh $\mT_h$. We thus get, using the Galerkin orthogonality, that
\begin{equation}
\label{eq:rep}
Q(e)=B(e,\widetilde{u}-\widetilde{u}_h)=B(e,\widetilde{e}).
\end{equation}
When a different mesh (e.g. with a local enrichment) is used to solve the adjoint problem, yielding an approximate adjoint solution $\widetilde{u}_+$, the relation~\eqref{eq:rep} is changed in
\begin{equation}
\label{eq:rep2}
Q(e)=B(e,\widetilde{u}-\widetilde{u}_+) + B(e,\widetilde{u}_+),
\end{equation}
where the second term $B(e,\widetilde{u}_+) = R(\widetilde{u}_+) = F(\widetilde{u}_+)-B(u_h,\widetilde{u}_+)$ is a fully computable correcting term.

\begin{remark}
A local enrichment technique for the solution of the adjoint problem is proposed in~\cite{CHA08,GRA03}, using analytical or pre-computed numerical functions as an additive term to a classical FE solution computed on the mesh $\mT_h$. This technique, referred to as non-intrusive in the sense that the global mesh is unchanged between the primal and the adjoint problems, is particularly interesting in the case of pointwise quantities of interest such as $u(\bx_0)$ for some $\bx_0 \in \Omega$ (assuming that the reference solution $u$ is continuous at point $\bx_0$). In such a case, analytical Green's functions can be introduced as an enrichment. This is an alternative to the method proposed in~\cite{PRU99}, based on using a regularized functional instead of the original pointwise functional. The regularized functional yields a modified quantity of interest that involves a smooth function $k_{\delta,\bx_0}$ (characterized by the length-scale $\delta >0$ and such that $\dis \intO k_{\delta,\bx_0}=1$) which performs a weighted average over a small neighborhood of the point $\bx_0$.
\end{remark}

An error indicator may be derived from~\eqref{eq:rep2} using the dual weighted residual (DWR) method proposed in~\cite{BEC96} and based on hierarchical \textit{a posteriori} error analysis. After computing an approximate adjoint solution $\widetilde{u}_+$ from a hierarchically refined FE space $V_+ \supset V_h^p$ (higher-order basis functions and/or refined mesh), and assuming that the error between $\widetilde{u}$ and $\widetilde{u}_+$ is small, the DWR method then consists in writing
\begin{equation}\label{eq:DWRindicator}
Q(u)-Q(u_h) \approx R(\widetilde{u}_+).
\end{equation}
The quantity $R(\widetilde{u}_+)$ is then taken as an error indicator on $Q$. 

\begin{remark}
Due to the Galerkin orthogonality, choosing $V_+ = V_h^p$ would of course lead to a meaningless indicator (in that case, we would have $R(\widetilde{u}_+) = 0$). Consequently, a convenient approximation of the adjoint solution should involve a subspace in $(V_h^p)^\perp$.
\end{remark}

The above argument can be somewhat quantified. Let $u_+ \in V_+$ be the approximation (in the refined space) of the primal solution. We assume that the saturation assumption $|Q(u)-Q(u_+)| \le \beta \, |Q(u)-Q(u_h)|$ is satisfied for some $0 \le \beta < 1$. Using the fact that $u_+-u_h \in V_+$ and next the fact that $\widetilde{u}_+ \in V_+$, we obtain
$$
Q(u_+)-Q(u_h)=B(u_+-u_h,\widetilde{u}_+)=B(u-u_h,\widetilde{u}_+)=R(\widetilde{u}_+).
$$
From the triangle inequality, we eventually get
$$
\frac{|R(\widetilde{u}_+)|}{1+\beta} \le |Q(u)-Q(u_h)| \le \frac{|R(\widetilde{u}_+)|}{1-\beta},
$$
which quantifies the approximation~\eqref{eq:DWRindicator}. The indicator defined in~\eqref{eq:DWRindicator} may be used to drive adaptive algorithms since it gives a clear and accurate distribution of error sources, but it does not provide for {\em guaranteed} error bounds on $Q$. Alternative techniques addressing this question are proposed in the next section.

\subsection{Computation of upper error bounds}

In the following, we bound the error on $Q(u)$ using~\eqref{eq:rep}. A first possibility~\cite{AIN00} is to directly use the Cauchy-Schwarz inequality:
\begin{equation}\label{eq:localbound1}
|Q(u)-Q(u_h)|=|B(e,\widetilde{e})| \le \vertiii{e} \ \vertiii{\widetilde{e}} \le \eta \ \widetilde{\eta},
\end{equation}
where $\eta$ and $\widetilde{\eta}$ are upper error bounds (in the energy norm) for the primal and adjoint problems, respectively. This shows that any guaranteed global error estimate can be used to assess the error on $Q(u)$ and give a conservative bound. It also shows that the error on $Q(u)$ decreases with a rate which is twice larger than that for the error on $u$ in the energy norm (thus our above claim that an error of the order of $\sqrt{\epsilon}$ on $\vertiii{e}$, and similarly on $\vertiii{\widetilde{e}}$, is enough to yield an error of the order of $\epsilon$ on $Q(u)$). However, the above estimate may be crude (and may not exploit the locality of $Q(u)$) due to the use of the Cauchy-Schwarz inequality, even though improvement techniques may be found in~\cite{LAD13}.

It is also possible, still starting from~\eqref{eq:rep}, to use a local Cauchy-Schwarz inequality, writing that
\begin{equation}
\label{eq:adapt_goal1}
|Q(u)-Q(u_h)| = \left| \sum_{K \in \mT_h} B_K(e,\widetilde{e}) \right| \le \sum_{K \in \mT_h} \vertiii{e}_{|K} \ \vertiii{\widetilde{e}}_{|K}, 
\end{equation}
but bounds on $\vertiii{e}_{|K}$ and $\vertiii{\widetilde{e}}_{|K}$ (using for instance local error indicators $\eta_K$ and $\widetilde{\eta}_K$) are heuristic. In addition, a bound such as~\eqref{eq:adapt_goal1} is again very conservative, in the sense that it does not take into account possible cancellation of errors over the domain. 

\begin{remark}
A quantitative evaluation of $Q(e)$ involving the elementary components of $e$ and $\widetilde{e}$, but not providing for an upper error bound, can also be exhibited when using the DWR method and the approximate adjoint solution $\widetilde{u}_+ \in V_+$ obtained with a fine mesh and/or a high-order method (see~\eqref{eq:DWRindicator}). We can indeed write
\begin{equation*}
Q(u)-Q(u_h) \approx R(\widetilde{u}_+) = R(\widetilde{e}_+) = \sum_{K\in \mT_h} \left[ \int_K r_K \, \widetilde{e}_+ + \int_{\partial K} R_{\partial K} \, \widetilde{e}_+ \right],
\end{equation*}
where $\widetilde{e}_+ = \widetilde{u}_+ - \widetilde{u}_h$ is an approximation of $\widetilde{e}$, and where $R_{\partial K}$ is defined on $\partial K$ by $R_{\partial K}(\bx) = R_\Gamma(\bx)$ for any $\bx \in \Gamma \subset \partial K \setminus \Gamma_D$ and $R_{\partial K}(\bx) = 0$ for any $\bx \in \partial K \cap \Gamma_D$ (we recall that $r_K$ and $R_\Gamma$ are defined in Section~\ref{section:elementresidualmeth}). Consequently, an error indicator can be defined as
\begin{equation}\label{eq:adapt_goal2}
\left|Q(u)-Q(u_h)\right| \approx \sum_{K \in \mT_h} \rho_K \, \omega_K \quad \text{with} \quad
\left\{
\begin{array}{l}
\rho_K = \|r_K\|_{0,K} + h_K^{-1/2} \, \|R_{\partial K}\|_{0,\partial K},
\\ \noalign{\vskip 3pt}
\omega_K = \|\widetilde{e}_+\|_{0,K} + h_K^{1/2} \, \|\widetilde{e}_+\|_{0,\partial K}.
\end{array}
\right.
\end{equation}
Note that $\rho_K$ and $\omega_K$ are computable terms.
\end{remark}

An alternative technique consists in using the parallelogram identity to get an upper bound on the error~\cite{ODE01,PRU99}. Starting from the expression
\begin{equation} \label{eq:mathieu2}
B(e,\widetilde{e})
=
B\left(s \, e,\frac{1}{s} \, \widetilde{e}\right)
=
\frac{1}{4} \left[\vertiii{s \, e+\frac{1}{s} \, \widetilde{e}}^2-\vertiii{s \, e-\frac{1}{s} \, \widetilde{e}}^2\right]
=
\frac{1}{4} \left[\chi_+ - \chi_-\right],
\end{equation}
where $s \in \RR$ is a scaling factor (to be optimized) and
$$
\chi_+ = \vertiii{s \, e + \frac{1}{s} \, \widetilde{e}}^2, \qquad
\chi_- = \vertiii{s \, e - \frac{1}{s} \, \widetilde{e}}^2,
$$
we directly get 
\begin{equation}\label{eq:boundparallel}
\frac{1}{4} \left[\chi_+^{\rm low}-\chi_-^{\rm upp}\right] \le Q(u)-Q(u_h) \le \frac{1}{4} \left[\chi_+^{\rm upp}-\chi_-^{\rm low} \right],
\end{equation}
where $\chi_-^{\rm upp}$ and $\chi_-^{\rm low}$ (resp. $\chi_+^{\rm upp}$ and $\chi_+^{\rm low}$) are upper and lower bounds on $\chi_-$ (resp. $\chi_+$). These can be derived from the various approaches detailed in the above sections.

Even though the error bounds defined in~\eqref{eq:boundparallel} are more accurate than those given by~\eqref{eq:localbound1}, error cancellations over the domain are still not captured by the approach. It appears that there is no method in the literature to conciliate the two requirements (i)~take into account error cancellation (as done by the DWR method) and (ii)~get guaranteed error bounds, even though recent works such as~\cite{LAD13} have proposed first ideas in that direction.

\medskip

Another alternative bounding technique (see~\cite{CHA08,LAD08}) is based on the properties of the duality-based approaches presented in Section~\ref{section:CRE}. Introduce the space of equilibrated fluxes (with respect to the adjoint problem)
$$
\widetilde{W} = \left\{ \widetilde{\bp} \in H(\dive,\Omega) \quad \text{such that, for any $v \in V$}, \quad \intO \widetilde{\bp} \cdot \nab v = Q(v) \right\}.
$$
After constructing admissible flux fields $\widehat{\bq}_h \in W$ and $\widehat{\widetilde{\bq}}_h \in \widetilde{W}$ as a post-processing of $\bq_h$ and $\widetilde{\bq}_h$ (as detailed in Section~\ref{section:SA}), a direct consequence of~\eqref{eq:localbound1} and~\eqref{eq:upper} is
\begin{equation}\label{eq:CREgobound1}
\left| Q(u)-Q(u_h)\right| \le 2 \, E_{\rm CRE}(u_h,\widehat{\bq}_h) \, E_{\rm CRE}(\widetilde{u}_h,\widehat{\widetilde{\bq}}_h).
\end{equation}
An \textit{a posteriori} error estimate on $Q$ which is more accurate than~\eqref{eq:CREgobound1} can be obtained using the hypercircle property~\eqref{eq:propertiesCRE1} verified by $\dis \widehat{\bq}^m_h=\frac{1}{2}(\widehat{\bq}_h+\bq_h)$. Indeed, we infer from~\eqref{eq:rep} and the fact that $\widehat{\widetilde{\bq}}_h \in \widetilde{W}$ that
\begin{multline*}
Q(u)-Q(u_h) = B\left(u-u_h,\widetilde{u}-\widetilde{u}_h\right) 
= \intO \nab(u-u_h) \cdot \left(\widetilde{\bq}-\Aa \nab \widetilde{u}_h \right) 
\\ = \intO \nab(u-u_h) \cdot \left(\widehat{\widetilde{\bq}}_h-\Aa \nab \widetilde{u}_h \right).
\end{multline*}
We consequently obtain
\begin{multline*}
Q(u)-Q(u_h) = \intO \Aa^{-1}(\bq-\bq_h) \cdot \left(\widehat{\widetilde{\bq}}_h-\Aa \nab \widetilde{u}_h \right) 
\\ = \intO \Aa^{-1} \left( \bq-\widehat{\bq}^m_h \right) \cdot \left(\widehat{\widetilde{\bq}}_h-\Aa \nab \widetilde{u}_h \right) + C_h,
\end{multline*}
where $\dis C_h = \frac{1}{2} \intO \Aa^{-1} \left( \widehat{\bq}_h - \bq_h \right) \cdot \left(\widehat{\widetilde{\bq}}_h-\widetilde{\bq}_h\right)$ is a computable term. From the Cauchy-Schwarz inequality and the use of~\eqref{eq:propertiesCRE1}, we eventually obtain the bound
\begin{equation}\label{eq:boundERCopti}
|Q(u)-Q(u_h)-C_h| \le E_{\rm CRE}(u_h,\widehat{\bq}_h) \, E_{\rm CRE}(\widetilde{u}_h,\widehat{\widetilde{\bq}}_h),
\end{equation}
which is twice sharper than~\eqref{eq:CREgobound1}, and partially takes into account error cancellations (through the term $C_h$).

\medskip

The quantity $Q(u_h)+C_h$ can be interpreted as a corrected approximation of the quantity of interest. Furthermore, the estimate~\eqref{eq:boundERCopti} can also be written in the form of an error bound on $Q$ as
\begin{equation}\label{eq:goestimate}
|Q(u)-Q(u_h)| \le \eta^Q := \max_{\theta=\pm 1} \left| C_h + \theta \, E_{\rm CRE}(u_h,\widehat{\bq}_h) \, E_{\rm CRE}(\widetilde{u}_h,\widehat{\widetilde{\bq}}_h) \right|.
\end{equation}

\begin{remark}
It can be shown that the bound~\eqref{eq:boundERCopti} is equivalent to the one given by~\eqref{eq:boundparallel} when taking $\chi_-^{\rm low} = \chi_+^{\rm low} = 0$ and for the optimal value $\dis s_{\rm opt} = \sqrt{\frac{\vertiii{\widetilde{e}}}{\vertiii{e}}} \approx \sqrt{\frac{E_{\rm CRE}(\widetilde{u}_h,\widehat{\widetilde{\bq}}_h)}{E_{\rm CRE}(u_h,\widehat{\bq}_h)}}$, in the following sense. We infer from~\eqref{eq:rep} and~\eqref{eq:mathieu2}, with the lower bound $\chi_\pm \geq 0$, that, for any $s$,
\begin{equation} \label{eq:mathieu}
- \frac{1}{4} \, \chi_-(s) \le Q(u)-Q(u_h) \le \frac{1}{4} \, \chi_+(s),
\end{equation}
where $\dis \chi_\pm(s) = \vertiii{s \, e \pm \frac{1}{s} \, \widetilde{e}}^2 = s^2 \, \vertiii{e}^2 + \frac{1}{s^2} \, \vertiii{\widetilde{e}}^2 \pm 2\, B(e,\widetilde{e})$. The interval given by~\eqref{eq:mathieu} is the smallest for $s$ such that $\chi_+(s) + \chi_-(s)$ is the smallest, thus the choice $s = s_{\rm opt}$. The width of the interval in~\eqref{eq:mathieu} is then $\vertiii{e} \, \vertiii{\widetilde{e}}$, which is bounded from above by $2 \, E_{\rm CRE}(u_h,\widehat{\bq}_h) \, E_{\rm CRE}(\widetilde{u}_h,\widehat{\widetilde{\bq}}_h)$. The center of the interval given by~\eqref{eq:mathieu} is $(\chi_+(s) - \chi_-(s))/8 = B(e,\widetilde{e})/2$, which is exactly equal to $C_h$. For the choice $s = s_{\rm opt}$, the bound~\eqref{eq:mathieu} thus implies~\eqref{eq:boundERCopti}.
\end{remark}

\begin{remark} \label{rem:QoI_enrich}
A still sharper bound can be obtained from the previous approach when using an enriched adjoint solution $\widetilde{u}_+$ (see~\cite{CHA08}). Starting from~\eqref{eq:rep2}, we have (using an admissible flux field $\widehat{\widetilde{\bq}}_+ \in \widetilde{W}$ obtained as a postprocessing of $\widetilde{\bq}_+$) that
\begin{multline*}
Q(u)-Q(u_h)-R(\widetilde{u}_+) = B(u-u_h,\widetilde{u}-\widetilde{u}_+) = \intO \nab(u-u_h) \cdot \left(\widehat{\widetilde{\bq}}_+-\Aa \nab \widetilde{u}_+ \right) \\ = \intO \Aa^{-1} \left( \bq-\widehat{\bq}^m_h \right) \cdot \left(\widehat{\widetilde{\bq}}_+-\Aa \nab \widetilde{u}_+ \right) + C_+,
\end{multline*}
with $\dis C_+ = \frac{1}{2} \intO \Aa^{-1} \left( \widehat{\bq}_h - \bq_h \right) \cdot \left(\widehat{\widetilde{\bq}}_+-\widetilde{\bq}_+\right)$. This leads to the bound
\begin{equation} \label{eq:QoI_enrich}
|Q(u)-Q(u_h)-\overline{C}_+| \le E_{\rm CRE}(u_h,\widehat{\bq}_h) \, E_{\rm CRE}(\widetilde{u}_+,\widehat{\widetilde{\bq}}_+),
\end{equation}
where $\dis \overline{C}_+ = R(\widetilde{u}_+) + C_+ = \frac{1}{2} \intO \Aa^{-1} \left( \widehat{\bq}_h - \bq_h \right) \cdot \left(\widehat{\widetilde{\bq}}_+ +\widetilde{\bq}_+ \right)$ is a new computable correction term.
\end{remark}

\medskip

Once the error on $Q(u)$ has been estimated, an adaptive strategy similar to the one presented in Section~\ref{section:adaptivity} can be used to reach a given error tolerance. The adaptive algorithm, which should now be based on local error contributions given by~\eqref{eq:adapt_goal1}, \eqref{eq:adapt_goal2}, \eqref{eq:boundparallel} or~\eqref{eq:goestimate}, enables one to recover optimal convergence rates~\cite{MOM09}. For instance, when using~\eqref{eq:goestimate}, and denoting $\theta_{\rm max}$ the maximizer there, we can proceed as follows. We first recast~\eqref{eq:goestimate} as
\begin{multline} \label{eq:mathieu6}
|Q(u)-Q(u_h)|
\le
\left| C_h + \theta_{\rm max} \, E_{\rm CRE}(u_h,\widehat{\bq}_h) \, E_{\rm CRE}(\widetilde{u}_h,\widehat{\widetilde{\bq}}_h) \right|
\\=
\left| \sum_{K \in \mT_h} C_h^K + \theta_{\rm max} \sqrt{\sum_{K \in \mT_h} (\eta_K)^2} \right|,
\end{multline}
where $\dis C_h^K = \frac{1}{2} \int_K \Aa^{-1} \left( \widehat{\bq}_h - \bq_h \right) \cdot \left(\widehat{\widetilde{\bq}}_h-\widetilde{\bq}_h\right)$ is the local contribution to $C_h$ (by construction, $C_h^K$ is an integral over $K$ and $\dis \sum_{K \in \mT_h} C_h^K  = C_h$) and where $\eta_K$ is defined by
\begin{multline*} 
(\eta_K)^2 = \frac{1}{2} \Big( E_{\rm CRE}(u_h,\widehat{\bq}_h) \Big)^2 \Big( E_{{\rm CRE},K}(\widetilde{u}_h,\widehat{\widetilde{\bq}}_h) \Big)^2 \\ + \frac{1}{2} \Big( E_{\rm CRE}(\widetilde{u}_h,\widehat{\widetilde{\bq}}_h) \Big)^2 \Big( E_{{\rm CRE},K}(u_h,\widehat{\bq}_h) \Big)^2,
\end{multline*}
where $E_{{\rm CRE},K}(u_h,\widehat{\bq}_h) = \vertiii{\widehat{\bq}_h-\Aa\nab u_h}_{q,K}/\sqrt{2}$ is the local contribution to $E_{\rm CRE}(u_h,\widehat{\bq}_h)$, in the sense that $\dis \big( E_{\rm CRE}(u_h,\widehat{\bq}_h) \big)^2 = \sum_{K \in \mT_h} \big( E_{{\rm CRE},K}(u_h,\widehat{\bq}_h) \big)^2$ (and likewise for $E_{{\rm CRE},K}(\widetilde{u}_h,\widehat{\widetilde{\bq}}_h)$). We thus observe that
$$
\sum_{K \in \mT_h} (\eta_K)^2 = \big( E_{\rm CRE}(u_h,\widehat{\bq}_h) \, E_{\rm CRE}(\widetilde{u}_h,\widehat{\widetilde{\bq}}_h) \big)^2.
$$
In view of~\eqref{eq:mathieu6}, we then take $C_h^K + \theta_{\rm max} \, \eta_K$ as a local indicator of the error in the element $K$, and proceed with an adaptive algorithm based on these local contributions. 

\subsection{Numerical illustration} \label{sec:goal_num_illus}

To illustrate the performance of the approach, we again consider the numerical example described in Section~\ref{section:illustration}, now focusing on a specific quantity of interest, which we choose as the average (over a critical three-dimensional subdomain $\omega$ of the structure) of the $zz$ component of the stress tensor $\sigma$ (where $z$ is the axial direction of the structure): $\dis Q(u) = \frac{1}{|\omega|} \int_\omega \sigma_{zz}(u)$. The subdomain $\omega$ is shown on Figure~\ref{fig:QoI} and includes around 15 finite elements.

\begin{figure}[H]
\begin{center}
\includegraphics[width=90mm]{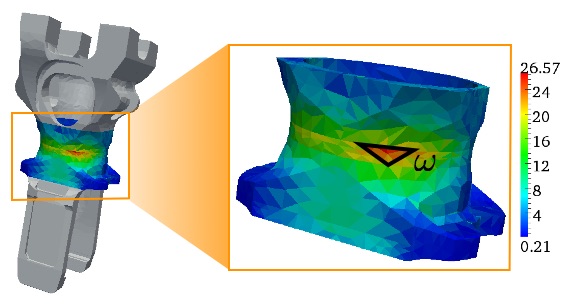}
\end{center}
\caption{Representation of the subdomain of interest $\omega$. Our quantity of interest is the average of $\sigma_{zz}$ over $\omega$. The value of the stress field (in Frobenius norm) in the neighborhood of $\omega$ is indicated on the figure. \label{fig:QoI}}
\end{figure}

The reference value of the quantity of interest (computed using a very fine mesh) is $Q(u) = 33.24$. Using the mesh presented in Section~\ref{section:illustration}, we obtain the approximate value $Q(u_h) = 28.18$. To obtain a bound on $Q(u)-Q(u_h)$, we follow the procedure explained in Remark~\ref{rem:QoI_enrich}. The bound~\eqref{eq:QoI_enrich} yields that the exact value $Q(u)$ satisfies $31.17 \leq Q(u) \leq 34.29$ (the correction term $\overline{C}_+$ is such that $Q(u_h) + \overline{C}_+ = 32.73$). We observe that the actual exact value indeed satisfies these bounds, and that the lower (resp. upper) bound underestimates (resp. overestimates) the exact value of $Q$ by only 6\% (resp. 3\%). The bound~\eqref{eq:QoI_enrich} thus yields a very accurate estimate.


\section{Extensions to other FE schemes}\label{section:extensions1}

Pioneering works on model verification for non-conforming FE schemes were developed in~\cite{BEC03,CAR02,DAR96,KAN99,RIV03} (recent contributions include e.g.~\cite{AIN18}). It was then shown that \textit{a posteriori} error estimation for conforming and non-conforming FE schemes could be phrased in a united framework~\cite{AIN05,AIN06,AIN07,BEC16,ERN07,ERN15,MAL20}. In both cases, one can get computable upper bounds on the error measured in the energy norm. In this section, we restrict our presentation to the reference problem presented in Section~\ref{section:pbreffort}, discretized using piecewise affine, but not necessarily conforming, FE approximations. We recall that
$$
V = \{v \in H^1(\Omega), \ \ v=0 \ \text{on $\Gamma_D$} \}.
$$

\subsection{Splitting of the error}

A conforming Galerkin discretization consists in introducing the finite dimensional subspace $\dis V_h^1 \subset V$. However, there may be advantages in using non-conforming discretization spaces that are not a subspace of $V$. These spaces are still constructed on families of regular partitions $\mT_h$ of $\Omega$. In the following, as in Section~\ref{section:elementresidualmeth}, we refer to a broken Sobolev space 
$$
X_{\rm brok} = \oplus_K H^1(K)
$$
and we introduce, for each element $K$, the function $\sigma_{\Gamma,K} : \partial K \mapsto \{-1,+1\}$ defined by~\eqref{eq:def_sigma_gamma_K}. For any $v \in X_{\rm brok}$, we denote by $[[v]]$ (resp. $\langle v \rangle$) its jump across (resp. its average on) the internal element edges. We also introduce $\bn = \sigma_{\Gamma,K} \, \bn_K$ (which is a well-defined vector on each edge) and the normal flux $q_n(v) = \bn \cdot \bq(v) = \bn \cdot \Aa \nab v$.

For any given parameter $\kappa>0$, we introduce the forms
\begin{multline*}
  B_{\rm brok}(w,v)
  = \sum_{K\in \mT_h} \int_K \Aa \nab w \cdot \nab v - \sum_{\Gamma \subset \Gamma_{\rm int} \cup \Gamma_D} \int_\Gamma \Big([[w]] \, \langle q_n(v) \rangle + \langle q_n(w) \rangle \, [[v]] \Big) \\ + \sum_{\Gamma \subset \Gamma_{\rm int} \cup \Gamma_D} \frac{\kappa}{h_\Gamma} \int_\Gamma [[w]] \, [[v]]
\end{multline*}
and
$$
F_{\rm brok}(v) = \sum_{K\in \mT_h} \int_K f \, v + \sum_{\Gamma \subset \Gamma_N} \int_\Gamma g \, v. 
$$
Note that, for any $v$ and $w$ in $X_{\rm brok}$, an integral over an edge of the product $\langle q_n(v) \rangle \, [[w]]$ is interpreted as a duality pairing. Furthermore, on edges $\Gamma \subset \Gamma_D$, $[[v]]$ and $\langle v \rangle$ simply denote the function $v$ itself: $[[v]] = \langle v \rangle = v$ on $\Gamma \subset \Gamma_D$. Note also that functions in $X_{\rm brok}$ do not necessarily vanish on $\Gamma_D$.

\medskip

We consider the following problem:
\begin{equation}\label{eq:pb_DG}
\text{Find $U \in X_{\rm brok}$ such that, for any $v \in X_{\rm brok}$,} \quad B_{\rm brok}(U,v) = F_{\rm brok}(v).
\end{equation}
By choosing appropriate test functions $v \in X_{\rm brok}$ (e.g. supported on a single element $K$, or non-zero on a single edge $\Gamma$), it can be easily shown~\cite{ARN02} that the unique solution $U$ to~\eqref{eq:pb_DG} corresponds to the solution $u$ to~\eqref{eq:refpbstrong}. We have thus recast the problem of interest as~\eqref{eq:pb_DG}.

\medskip

We then introduce a finite dimensional subspace $X_h \subset X_{\rm brok}$, based on the partition $\mT_h$ of $\Omega$, and construct an approximation $U_h \in X_h$ by considering the finite-dimensional problem
\begin{equation}\label{eq:weakNC}
\forall v \in X_h, \quad B_{\rm brok}(U_h,v)=F_{\rm brok}(v).
\end{equation}
If $\kappa$ sufficiently large, then the above problem has a unique solution (see~\cite{AIN05}). In the following, we consider two choices of $X_h$, which differ by the conditions which are applied on $\Gamma_{\rm int} \cup \Gamma_D$:
\begin{itemize}
\item Discontinuous Galerkin FEM:
$$
X_h^{\rm DG} = \{ v \in X_{\rm brok}, \quad v_{|K} \in P_1(K) \ \ \text{for any $K \in \mT_h$} \};
$$
\item Crouzeix-Raviart FEM:
$$
X_h^{\rm CR} = \{v \in X_h^{\rm DG}, \quad \int_\Gamma [[v]] = 0 \ \ \text{for any $\Gamma \subset \Gamma_{\rm int} \cup \Gamma_D$} \}.
$$
\end{itemize}
We recall that the conforming FEM corresponds to the choice
$$
X_h^{\rm CG}= V_h^1 = \{v\in X_h^{\rm DG}, \quad [[v]] = 0 \ \ \text{on any $\Gamma \subset \Gamma_{\rm int} \cup \Gamma_D$} \}.
$$
We of course have $X_h^{\rm CG} \subset X_h^{\rm CR} \subset X_h^{\rm DG} \subset X_{\rm brok}$. Note that functions in $X_h^{\rm CR}$ are continuous at the midpoints of the interior edges, and vanish at the midpoints of edges on $\Gamma_D$.

\medskip

Since the error $e=u-U_h$ is in general not in $V$, a natural idea is to split it into a part that belongs to $V$ and a remainder. We thus consider the projection $\phi\in V$ of $e$ (which is the so-called conforming error), defined by
\begin{equation}
  \label{eq:def_phi}
\forall v \in V, \quad B(\phi,v)=\intO \Aa \nab_h e\cdot \nab v = \intO f \, v + \int_{\Gamma_N} g \, v - \intO \Aa \nab_h U_h\cdot \nab v,
\end{equation}
where, for any $w \in X_{\rm brok}$, $\nab_h w$ is equal, on each element $K$, to $\nab w$. The remaining part $\rho=e-\phi$ is orthogonal to $V$, in the sense that
\begin{equation}
  \label{eq:helas5}
\forall v \in V, \quad \intO \Aa \nab_h \rho \cdot \nab v = 0.
\end{equation}
Denoting $\bq(\rho) = \Aa \nab_h \rho$, we thus see that $\bq(\rho)$ is divergence free in $\Omega$ (at least in the sense of distributions), and that $\bq(\rho) \in (L^2(\Omega))^d$. There thus exists $\boldsymbol{\psi} \in (H^1(\Omega))^d$ such that $\bq(\rho) = \rott \boldsymbol{\psi}$. We next infer from~\eqref{eq:helas5}, using an integration by parts, that $\bn \cdot \rott \boldsymbol{\psi} = 0$ on $\Gamma_N$. Summarizing, $\bq(\rho)$ can be expressed as the curl of a function $\boldsymbol{\psi}$ belonging to
\begin{equation*}
H=\left\{ \bw \in (H^1(\Omega))^d, \quad \bn \cdot \rott \bw = 0 \ \text{on $\Gamma_N$} \right\},
\end{equation*}
and we have the splitting
\begin{equation}
  \label{eq:helas4}
\bq(e) = \Aa \nab_h e = \bq(\phi) + \rott \boldsymbol{\psi},
\end{equation}
which is an orthogonal decomposition in the sense that
\begin{equation}
  \label{eq:helas7}
\intO \Aa^{-1}\bq(e)\cdot\bq(e) = \intO \Aa^{-1}\bq(\phi)\cdot\bq(\phi) + \intO \Aa^{-1} (\rott \boldsymbol{\psi}) \cdot (\rott \boldsymbol{\psi}),
\end{equation}
since $\dis \intO \Aa^{-1} \bq(\phi) \cdot \rott \boldsymbol{\psi} = 0$ in view of~\eqref{eq:helas5}. The above equation can equivalently be written $\vertiii{e}^2_h = \vertiii{\bq(\phi)}_q^2 + \vertiii{\bq(\rho)}_{q,h}^2 = \vertiii{\phi}^2 + \vertiii{\rho}^2_h$, with the notation $\dis \vertiii{e}^2_h = \int_\Omega \Aa \nab_h e \cdot \nab_h e$.

\begin{remark}
If $\Aa$ is the identity matrix, the splitting~\eqref{eq:helas4} is the classical Helmholtz decomposition of the vector $\nab_h e$.
\end{remark}

\subsection{Estimation of the non-conforming part of the error}

The non-conforming part $\rho$ of the error may be estimated using the identity
\begin{equation}
  \label{eq:helas6}
\vertiii{\rho}_h = \vertiii{\bq(\rho)}_{q,h} = \min_{v\in V} \vertiii{v-U_h}_h,
\end{equation}
which can be established as follows. We first observe that, for any $v \in V$, we have, in view of~\eqref{eq:helas5},
\begin{equation*}
\vertiii{v-U_h}^2_h = \vertiii{v-u+\phi+\rho}^2_h = \vertiii{v-u+\phi}^2 + \vertiii{\rho}^2_h.
\end{equation*}
This implies that $\vertiii{v-U_h}_h \ge \vertiii{\rho}_h$ for any $v \in V$. Furthermore, $\vertiii{v-U_h}_h = \vertiii{\rho}_h$ for the specific choice $v = u - \phi \in V$, which shows~\eqref{eq:helas6}.

\medskip

In order to use~\eqref{eq:helas6} to estimate $\vertiii{\rho}_h$, we need to find a suitable choice for $\widehat{u}_h \in V$. This function can be constructed by smoothing the approximation $U_h$ with an averaging operator, i.e. defining $\widehat{u}_h$ as a continuous and piecewise affine function over $\mT_h$ with value at node $i$ taken as the average of the values of $U_h$ at this node (except for nodes on $\Gamma_D$, where the value of $\widehat{u}_h$ is taken to be zero). This gives a computable upper bound on the non-conforming part of the error:
\begin{equation*}
\vertiii{\rho}^2_h \le \sum_{K \in \mT_h} \eta^2_{{\rm NC},K} \qquad \text{with} \qquad \eta^2_{{\rm NC},K} = \int_K \Aa \nab (\widehat{u}_h-U_h)\cdot \nab (\widehat{u}_h-U_h).
\end{equation*} 
It can be shown~\cite{AIN06,AIN07} that this estimator is efficient in the sense that there exists $C$, independent of the mesh size, such that
\begin{equation*}
\eta^2_{{\rm NC},K} \le C\int_{U(K)} \Aa \nab_h \rho \cdot \nab_h \rho,
\end{equation*} 
where $U(K)$ is the patch of elements (defined in~\eqref{eq:neighbor}) composed of $K$ and its neighbors. 

\begin{remark}
The estimate on the non-conforming part of the error is consistent with a conforming Galerkin discretization. Indeed, in this case, our above construction yields $\widehat{u}_h = U_h \in V$ and thus $\eta_{{\rm NC},K} = 0$.
\end{remark}

\subsection{Estimation of the conforming part of the error}

To estimate the conforming part $\phi$ of the error, the classical \textit{a posteriori} error estimation tools developed in the above sections for conforming Galerkin discretization can be reused. We here follow the strategy of the equilibrated element residual method (see Section~\ref{section:elementresidualmeth}). Introduce a set of tractions $\widehat{g}_{\Gamma,K}$ defined on element boundaries, such that
\begin{itemize}
\item they are equilibrated at the element level, that is,
  \begin{equation}\label{eq:equilelement_DG}
    \forall K \in \mT_h, \quad \int_K f + \sum_{\Gamma \subset \partial K} \int_\Gamma \widehat{g}_{\Gamma,K} = 0;
  \end{equation}
\item they satisfy
\begin{equation} \label{eq:helas15}
\widehat{g}_{\Gamma,K_1}+\widehat{g}_{\Gamma,K_2} = 0 \quad \text{on $\partial K_1 \cap \partial K_2$}, \qquad \widehat{g}_{\Gamma,K} = g \quad \text{on $\partial K \cap \Gamma_N$}.
\end{equation}
\end{itemize}
Simply using~\eqref{eq:helas15}, we obtain from~\eqref{eq:def_phi} that
\begin{equation} \label{eq:helas16}
\forall v \in V, \quad B(\phi,v) = \sum_{K \in \mT_h}\left( \int_K f \, v + \sum_{\Gamma \subset \partial K} \int_\Gamma \widehat{g}_{\Gamma,K} \, v -\int_K \Aa\nab U_h \cdot \nab v\right),
\end{equation}
a relation that will be useful below.

The technique for the construction of the flux functions $\widehat{g}_{\Gamma,K}$ is again based on a post-processing of the numerical solution $U_h$ to~\eqref{eq:weakNC}, and it differs depending on the non-conforming FE scheme. However, it is in any case less complex than the technique introduced in Section~\ref{section:SA} for a conforming FE scheme. We detail the procedure in the two cases that we consider, the Discontinuous Galerkin FEM and the Crouzeix-Raviart FEM:
\begin{itemize}
\item For Discontinuous Galerkin (DG) FEM, the absence of any continuity requirement on the test functions $v\in X_h^{\rm DG}$ involved in~\eqref{eq:weakNC} makes the construction of $\widehat{g}_{\Gamma,K}$ easy. On any edge $\Gamma \subset \Gamma_{\rm int} \cup \Gamma_D$ and for any element $K$ such that $\Gamma \subset \partial K$, we define $\widehat{g}_{\Gamma,K}$ by the simple closed form expression
\begin{equation*}
\widehat{g}_{\Gamma,K} = \sigma_{\Gamma,K} \left( \langle q_n(U_h) \rangle - \frac{\kappa}{h_\Gamma} [[U_h]] \right).
\end{equation*}
The key point to check that~\eqref{eq:equilelement_DG} indeed holds is that the characteristic function $\chi_K\in X_h^{\rm DG}$ of any element $K$ is an admissible test function for the DG FEM. Taking $v = \chi_K$ in~\eqref{eq:weakNC}, we easily obtain that the tractions defined above indeed satisfy~\eqref{eq:equilelement_DG}. In addition, the property~\eqref{eq:helas15} obviously holds. 
\item In contrast to DG, the Crouzeix-Raviart scheme imposes a weak continuity condition on the test functions $v\in X_h^{\rm CR}$ between neighboring elements. We can thus not choose a test function supported on a single element. For a given edge $\Gamma$, let $\theta_\Gamma \in X_h^{\rm CR}$ be the function whose value is unity at the midpoint of $\Gamma$ and which vanishes at the midpoint of all the other edges. The support of $\theta_\Gamma$ is the set of elements of which $\Gamma$ is an edge. In the two-dimensional case, the support of $\theta_\Gamma$ is $K_1 \cup K_2$, where $K_1$ and $K_2$ are the two triangles sharing the edge $\Gamma$, and $\theta_\Gamma$ is actually the unique piecewise affine function which vanishes everywhere in $\Omega \setminus (K_1 \cup K_2)$, and which is equal to 1 at both endpoints of the edge $\Gamma$ and equal to -1 at the third vertex of $K_1$ and of $K_2$. We then get $\theta_\Gamma = 1$ on $\Gamma$ (and thus $[[ \theta_\Gamma ]] = 0$ on $\Gamma$). This property also holds in dimension $d \geq 3$. 
In addition, for any element $K$, we have $\dis \sum_{\Gamma \subset \partial K} \theta_\Gamma = 1$ on $K$ since $\dis \sum_{\Gamma \subset \partial K} \theta_\Gamma$ is an affine function equal to 1 at the midpoint of each edge of $K$.

On any edge $\Gamma \subset \Gamma_{\rm int} \cup \Gamma_D$ and for any element $K$ such that $\Gamma \subset \partial K$, we now define $\widehat{g}_{\Gamma,K}$ as a constant function with value given by  
\begin{equation*}
h_\Gamma \, \widehat{g}_{\Gamma,K} = \int_K \Aa \nab U_h \cdot \nab \theta_\Gamma - \int_K f \, \theta_\Gamma + \sum_{\Gamma' \subset \partial K} \frac{\kappa}{h_{\Gamma'}}\int_{\Gamma'} [[U_h]] \ [[ \theta_\Gamma]].
\end{equation*}
We now check that these tractions indeed satisfy~\eqref{eq:helas15}, under the additional assumption that $\Aa$ is piecewise constant on the mesh. Let $K_1$ and $K_2$ be the two elements sharing the edge $\Gamma$ (for brevity, we only consider the case when $\partial K_1 \cup \partial K_2$ does not contain any edge of $\partial \Omega$). We then have, using~\eqref{eq:weakNC} with $v=\theta_\Gamma$, that
\begin{align}
  & h_\Gamma \, \left( \widehat{g}_{\Gamma,K_1}+\widehat{g}_{\Gamma,K_2} \right)
  \nonumber
  \\
  &=
  \int_\Omega \Aa \nab_h U_h \cdot \nab_h \theta_\Gamma - \int_\Omega f \, \theta_\Gamma + \sum_{\Gamma' \subset \partial K_1 \cup \partial K_2} \frac{\kappa}{h_{\Gamma'}}\int_{\Gamma'} [[U_h]] \ [[ \theta_\Gamma]]
  \nonumber
  \\
  &=
  \sum_{\Gamma' \subset \Gamma_{\rm int}} \int_{\Gamma'} \Big([[U_h]] \, \langle q_n(\theta_\Gamma) \rangle + \langle q_n(U_h) \rangle \, [[\theta_\Gamma]] \Big),
  \label{eq:mathieu3}
\end{align}
where we have used, in the first line, that $\theta_\Gamma$ is supported in $K_1 \cup K_2$. Since $\theta_\Gamma$ and $U_h$ are piecewise affine functions and since $\Aa$ is piecewise constant, we have that $\bq(\theta_\Gamma)$ and $\bq(U_h)$ are piecewise constant. We therefore have that, on each edge, $\langle q_n(\theta_\Gamma) \rangle$ and $\langle q_n(U_h) \rangle$ are constant. We thus compute that $\dis \int_{\Gamma'} [[U_h]] \, \langle q_n(\theta_\Gamma) \rangle = \langle q_n(\theta_\Gamma) \rangle_{|\Gamma'} \int_{\Gamma'} [[U_h]] = 0$, where the last equality stems from the fact that $U_h \in X_h^{\rm CR}$. The second term in the right-hand side of~\eqref{eq:mathieu3} vanishes for the same reasons. We hence deduce~\eqref{eq:helas15}.

We next check that the local equilibrium property~\eqref{eq:equilelement_DG} holds. For any $K$, we compute that
\begin{align}
  & \int_K f + \sum_{\Gamma \subset \partial K} \int_\Gamma \widehat{g}_{\Gamma,K}
  \nonumber
  \\
  &=
  \int_K f + \sum_{\Gamma \subset \partial K} \left( \int_K \Aa \nab U_h \cdot \nab \theta_\Gamma - \int_K f \, \theta_\Gamma + \sum_{\Gamma' \subset \partial K} \frac{\kappa}{h_{\Gamma'}}\int_{\Gamma'} [[U_h]] \ [[ \theta_\Gamma]] \right)
  \nonumber
  \\
  &=
  \sum_{\Gamma \subset \partial K} \left( \sum_{\Gamma' \subset \partial K} \frac{\kappa}{h_{\Gamma'}}\int_{\Gamma'} [[U_h]] \ [[ \theta_\Gamma]] \right) \qquad \text{[since $\sum_{\Gamma \subset \partial K} \theta_\Gamma = 1$ on $K$]}
  \nonumber
  \\
  &=
  \sum_{\Gamma' \subset \partial K} \frac{\kappa}{h_{\Gamma'}}\int_{\Gamma'} [[U_h]] \ [[ \sum_{\Gamma \subset \partial K} \theta_\Gamma]].
  \label{eq:helas17}
\end{align}
For any edge $\Gamma' \subset \partial K$, let us denote $K$ and $K'$ the two elements sharing that edge. We know that $\dis \sum_{\Gamma \subset \partial K} \theta_\Gamma = 1$ in $K$. In $K'$, we have $\theta_\Gamma = 0$ for all $\Gamma \subset \partial K \setminus \Gamma'$, hence $\dis \sum_{\Gamma \subset \partial K} \theta_\Gamma = \theta_{\Gamma'}$, which is continuous across $\Gamma'$ and equal to 1 there. We hence get that $\dis \sum_{\Gamma \subset \partial K} \theta_\Gamma$ is equal to 1 on both sides of the edge $\Gamma'$, and we thus infer~\eqref{eq:equilelement_DG} from~\eqref{eq:helas17}. 
\end{itemize}
Once we have defined equilibrated tractions $\widehat{g}_{\Gamma,K}$, we proceed similarly to~\eqref{eq:retour3} and introduce local residual problems. Here, they consist in finding $w_K \in H^1(K)$ such that
\begin{equation*}
\forall v \in H^1(K), \quad \int_K \Aa \nab w_K \cdot \nab v =\int_K f \, v + \sum_{\Gamma \subset \partial K} \int_\Gamma \widehat{g}_{\Gamma,K} \, v - \int_K \Aa\nab U_h \cdot \nab v.
\end{equation*}
Since~\eqref{eq:equilelement_DG} is satisfied, the above Neumann problem is well-posed and uniquely defines $w_K$, up to an additive constant. In view of~\eqref{eq:helas16}, we deduce that
\begin{equation*}
\forall v \in V, \quad B(\phi,v) = \sum_{K \in \mT_h} \int_K \Aa \nab w_K \cdot \nab v,
\end{equation*}
and we get, applying the Cauchy-Schwarz inequality, an upper bound on the conforming part of the error:
\begin{equation*}
\vertiii{\phi}^2 \le \sum_{K \in \mT_h} \eta^2_{{\rm CF},K} \qquad \text{with} \qquad \eta^2_{{\rm CF},K} = \int_K \Aa \nab w_K \cdot \nab w_K. 
\end{equation*} 
It can be shown~\cite{AIN05} that this estimator is efficient in the sense that there exists $C$, independent of the mesh size, such that
\begin{equation*}
\eta^2_{{\rm CF},K} \le C\int_{U(K)} \Aa \nab \phi\cdot \nab \phi,
\end{equation*} 
where $U(K)$ is again the patch of elements composed of $K$ and its neighbors.

\begin{remark}
A similar bound can be obtained using duality arguments and the CRE concept~\cite{DES98b,ERN15}, constructing an equilibrated flux field $\widehat{\bq}_h \in W$ and defining $\eta_{{\rm CF},K} = \vertiii{\widehat{\bq}_h-\Aa \nab U_h}_{q,K}$. As highlighted previously, the procedure to recover $\widehat{\bq}_h$ (e.g. using the hybrid-flux approach or RTN elements) is here less technical than in the classical conforming FEM.
\end{remark}

\subsection{A posteriori bounds on the total error}

In view of~\eqref{eq:helas7}, and collecting the bounds from above and below established on the two error components, we eventually get
\begin{equation*}
\frac{1}{C} \sum_{K \in \mT_h} (\eta^2_{{\rm CF},K} + \eta^2_{{\rm NC},K}) \le \vertiii{e}^2_h \le \sum_{K \in \mT_h} (\eta^2_{{\rm CF},K} + \eta^2_{{\rm NC},K}),
\end{equation*}
for some $C$ independent of $h$, where we recall that $\vertiii{\cdot}_h$ is the broken energy norm.


\section{Extensions to other mathematical problems}\label{section:extensions2}

In all the above sections, we have considered the problem~\eqref{eq:refpbstrong}, namely a time-independent, linear elliptic problem in divergence form, with a symmetric matrix $\Aa$. In order to show the versatility of the approaches we have presented, we turn in this section to more general problems. We consider a non-symmetric problem in Section~\ref{sec:adv-diff}. Time-dependent and nonlinear problems are next briefly discussed in Section~\ref{sec:non-lin}.

\subsection{Advection-diffusion-reaction problems}
\label{sec:adv-diff}

We consider a scalar advection-diffusion-reaction problem of the form
\begin{equation} \label{eq:advec}
-\dive (\Aa \nab u) + \bc \cdot \nab u + r \, u = f \ \ \text{in $\Omega$}, \quad u=0 \ \ \text{on $\Gamma_D$}, \quad (\Aa \nab u) \cdot \bn = g \ \ \text{on $\Gamma_N$}.
\end{equation}
We introduce
\begin{equation*}
B(u,v)=\intO \Aa\nab u \cdot \nab v + (\bc \cdot \nab u) v + r \, u \, v, \qquad F(v) = \intO f \, v + \int_{\Gamma_N} g \, v,
\end{equation*}
and assume that the matrix $\Aa$ satisfies~\eqref{eq:elliptic}, that $\dis r-\frac{1}{2} \dive \bc \ge 0$ in $\Omega$ and that $\bc \cdot \bn \geq 0$ on $\Gamma_N$. The bilinear form $B$ is hence coercive on $V$. For the sake of simplicity, we again assume that $\Aa$ is symmetric. Note however that $B$ is not symmetric, due to the presence of the advection term. 

\medskip

The weak formulation of~\eqref{eq:advec} is
\begin{equation*}
\text{Find $u \in V$ such that, for any $v \in V$,}\quad B(u,v)=F(v).
\end{equation*}
It has a unique solution in view of the above assumptions. This solution may show boundary layers, namely subregions of $\Omega$ where the derivatives of $u$ are very large~\cite{JOH00,STE98}. We define the numerical problem as:
\begin{equation} \label{eq:FEpb_adv}
\text{Find $u_h \in V_h^p$ such that, for any $v \in V_h^p$,} \quad B(u_h,v)=F(v).
\end{equation}

\begin{remark}
\label{rem:supg}
Even though the bilinear form $B$ is coercive, the problem~\eqref{eq:FEpb_adv} may be ill-conditioned, typically in the case of advection-dominated problems (i.e. when the P\'eclet number $P_e$ is large; considering for instance the simple case $r=0$, we recall that the P\'eclet number is $P_e = h \| \bc \|_{L^\infty(\Omega)}/a_{\rm min}$, where $a_{\rm min}$ is the coercivity constant in~\eqref{eq:elliptic}). Stabilization techniques (see e.g.~\cite{BRE92,FRA92,HUG95,QUA94,STY05}) are then required. The streamline diffusion FE method (SDFEM, or SUPG) is one of them. It consists in adding to the standard Galerkin FEM some weighted residuals that can be interpreted as artificial diffusion along streamlines. This leads to the new forms
\begin{align*}
B_{\rm stab}(u,v) &= B(u,v) + \sum_{K \in \mT_h} \delta_K \int_K \Big( {\cal L} u \Big) \, \Big( {\cal L}_{ss} v \Big), \\
F_{\rm stab}(v) &= F(v) + \sum_{K \in \mT_h} \delta_K \int_K f \, \Big( {\cal L}_{ss} v \Big), 
\end{align*}
where $\delta_K$ is a local stabilization parameter, $\dis {\cal L} u = -\dive (\Aa \nab u) + \bc \cdot \nab u + r \, u$ is the operator of the problem and ${\cal L}_{ss}$ is the skew-symmetric part of ${\cal L}$. The stabilized problem then consists in finding $u_h^{\rm stab} \in V_h^p$ such that $B_{\rm stab}(u_h^{\rm stab},v) = F_{\rm stab}(v)$ for any $v \in V_h^p$. As is well-known, this approach is strongly consistent, in the sense that the exact solution $u$ satisfies $B_{\rm stab}(u,v) = F_{\rm stab}(v)$ for any $v \in V_h^p$.
\end{remark}

\subsubsection{\textit{A priori} error estimation}

We introduce the Cl\'ement interpolant $\Pi_h u$ of the exact solution $u$, as defined in Section~\ref{section:apriori}, and decompose the overall error as $e=(u-\Pi_h u)+(\Pi_h u-u_h)=\mu+e_h$. The term $\mu$ corresponds to the interpolation error, while $e_h$ belongs to $V_h^p$. 

\medskip

When $\bc=0$, the bilinear form $B$ is symmetric and Galerkin orthogonality yields
\begin{equation}
\label{eq:le_cas_symm}
\vertiii{e}^2 = B(e,\mu+e_h) = B(e,\mu) \le \vertiii{e} \ \vertiii{\mu},
\end{equation}
and thus $\vertiii{e} \le \vertiii{\mu}$, which corresponds to the best approximation property. The error (in the energy norm) can thus be bounded as in Section~\ref{section:apriori}, from a standard interpolation estimate~\cite{CIA78,STR73}. In particular, when $u \in H^q(\Omega)$ for some $q \ge p+1$, we get 
\begin{equation}\label{eq:aprioriadv}
\vertiii{e} \le C \, h^p \, \|u\|_{p+1},
\end{equation}
where $C$ depends on $p$ and the geometry of the mesh (but is independent of $h$ and $u$).

\medskip

In the general case (i.e. for a non-symmetric bilinear form $B$), the estimate $|B(e,\mu)| \le \vertiii{e} \ \vertiii{\mu}$ used in~\eqref{eq:le_cas_symm} does not hold. We follow here the procedure introduced in~\cite{STE98} to estimate the error. The form $B$ is split in its symmetric and skew symmetric parts:
\begin{multline*}
B(w,v)=\frac{1}{2} \big( B(w,v)+B(v,w) \big) + \frac{1}{2} \big( B(w,v)-B(v,w) \big) \\ = B_{\rm symm}(w,v) + B_{\rm skew}(w,v),
\end{multline*}
with 
\begin{align*}
B_{\rm symm}(w,v)&=\intO \left[\Aa\nab w \cdot \nab v + (r-\frac{1}{2} \dive \bc) \, w \, v \right] + \frac{1}{2} \int_{\Gamma_N} \bc\cdot \bn \ w \, v, \\
B_{\rm skew}(w,v)&=\frac{1}{2}\intO \bc \cdot (v \nab w -w \nab v). 
\end{align*}

\begin{remark}
From this decomposition of $B$, it is straightforward to understand why the inequality in~\eqref{eq:le_cas_symm} does not hold. The skew symmetric part $B_{\rm skew}$ does not contribute to the ``energy'' norms $\vertiii{e}$ and $\vertiii{\mu}$, but it contributes to $B(e,\mu)$.
\end{remark}

We next define a dual norm (the so-called skew norm) to accomodate for the skew symmetric part of $B$:
\begin{equation*}
\vertiii{w}_{\rm skew} := \sup_{v\in V}\frac{|B_{\rm skew}(w,v)|}{\vertiii{v}}.
\end{equation*}
For any $w$ and $v$ in $V$, we thus have $|B_{\rm skew}(w,v)| \le \vertiii{w}_{\rm skew} \, \vertiii{v}$. Using that $B_{\rm skew}(w,v) = -B_{\rm skew}(v,w)$, we also have $|B_{\rm skew}(v,w)| \le \vertiii{w}_{\rm skew} \, \vertiii{v}$. We thus obtain
\begin{multline*}
\vertiii{e}^2 
= 
|B(e,\mu)|
=
|B_{\rm symm}(e,\mu)+B_{\rm skew}(e,\mu)| 
\\ \le 
|B_{\rm symm}(e,\mu)|+|B_{\rm skew}(\mu,e)| 
\le 
\vertiii{\mu} \, \vertiii{e} + \vertiii{\mu}_{\rm skew} \, \vertiii{e},
\end{multline*}
hence
\begin{equation} \label{eq:helas18}
\vertiii{e} \le \vertiii{\mu}+\vertiii{\mu}_{\rm skew}.
\end{equation}
Focusing on $\vertiii{\mu}_{\rm skew}$, we get, using the Poincar\'e inequality in $V$, that
\begin{equation*}
\vertiii{\mu}_{\rm skew} = \sup_{v\in V}\frac{|B_{\rm skew}(v,\mu)|}{\vertiii{v}} \le C \|\mu\|_1,
\end{equation*}
where the constant $C$ is large for advection-diffusion problems with large P\'eclet number $P_e$. Inserting the above bound in~\eqref{eq:helas18} and using an interpolation estimate, we recover~\eqref{eq:aprioriadv} when $u \in H^q(\Omega)$ for some $q \ge p+1$.

\begin{remark}
An alternative approach to estimate the error consists in using the coercivity of $B$ in $V$: there exists some $c>0$ such that $B(v,v) \geq c \, \| v \|_1^2$ for any $v \in V$. Writing this coercivity estimate for $e \in V$ and using next the variational formulations that $u$ and $u_h$ satisfy, we obtain that, for any $v_h \in V_h^p$,
$$
c \, \| e \|_1^2 \leq B(e,e) = B(u-u_h,u-u_h) = B(u-u_h,u-v_h).
$$
Using next that $B$ is continuous in $V$, we deduce that
$$
c \, \| e \|_1^2 \leq C \, \| u-u_h \|_1 \ \| u-v_h \|_1,
$$
and therefore $\| e \|_1 \leq C \, \| u-v_h \|_1$ for any $v_h \in V_h^p$. Using classical approximation results, we hence infer that $\| e \|_1 \leq C \, h^p \, \| u \|_{p+1}$. We eventually write that $\vertiii{e} = \sqrt{B(e,e)} \leq C \, \| e \|_1$ and deduce~\eqref{eq:aprioriadv}.
\end{remark}

\begin{remark}
Suppose that $u \in H^2(\Omega)$, that $P_e>1$ and that we use piecewise affine finite elements. We can then show that $\| u - u_h \|_1 \le Ch(1+P_e) \|u\|_2$ when using the (original, non-stabilized) bilinear form $B$, whereas $\| u - u_h^{\rm stab} \|_1\le Ch(1+\sqrt{P_e})\|u\|_2$ when using the stabilized formulation described in Remark~\ref{rem:supg}.
\end{remark}

\subsubsection{\textit{A posteriori} error estimation}

Several \textit{a posteriori} error estimates have been developed for advection-diffusion-reaction problems in the literature. We detail some of them below. They basically follow similar methodologies as for pure diffusion problems (i.e. with a symmetric operator), since the Cauchy-Schwarz inequality (in the sense of~\eqref{eq:le_cas_symm}) is not needed when establishing \textit{a posteriori} estimates.

\medskip

In~\cite{JOH00}, the use of the explicit residual method (in the vein of Section~\ref{sec:resu_exp}) leads to the general form $\eta = \sqrt{\sum_{K \in \mT_h} \eta^2_K}$ for the estimate of the error in the energy norm, with
\begin{multline*}
\eta^2_K = \alpha_K \, \|f+\dive (\Aa \nab u_h)- \bc \cdot \nab u_h - r \, u_h\|^2_{0,K} \\ + \sum_{\Gamma \subset \partial K \setminus \Gamma_N} \frac{\beta_\Gamma}{2} \, \Big\| [[ \Aa \nab u_h \cdot \bn ]]_\Gamma \Big\|_{0,\Gamma}^2 + \sum_{\Gamma \subset \partial K \cap \Gamma_N} \beta_\Gamma \, \|\Aa \nab u_h \cdot \bn-g\|_{0,\Gamma}^2,
\end{multline*}
where $\alpha_K = \min(h_K^2 \, a_{\rm max}^{-1},1)$ and $\beta_\Gamma = \min(h_\Gamma \, a_{\rm max}^{-1},a_{\rm max}^{-1/2})$. It is shown that $\eta$ provides for a global upper estimate, and $\eta_K$ provides for a local lower estimate, up to unknown multiplicative constants. These estimates are robust in the advection-dominated regime. We refer to~\cite{JOH00} for more details.

\medskip

Implicit methods such as the element residual method (in the vein of Section~\ref{section:elementresidualmeth}) may also be used. They involve the resolution of local problems of the following form (compare with~\eqref{locpbelem}): find $e_K\in V(K)$ such that, for any $v \in V(K)$,
\begin{multline*}
B_K(e_K,v) = \int_K (f+ \dive \Aa \nab u_h - \bc \cdot \nab u_h - r \, u_h) \, v \\ -\frac{1}{2} \sum_{\Gamma \subset \partial K \setminus \partial \Omega} \int_\Gamma [[ \Aa\nab u_h\cdot \bn ]] \, v - \sum_{\Gamma \subset \partial K \cap \Gamma_N} \int_\Gamma (\Aa \nab u_h \cdot \bn-g) \, v,
\end{multline*}
where we recall that $V(K) = \left\{v \in H^1(K), \ \text{$v=0$ on $\partial K \cap \Gamma_D$} \right\}$. Local estimates $\eta_K$ are next computed using $e_K$. The above problem may be in practice solved with higher-order elements (bubble functions). A difficulty comes from the fact that the solution $e_K$ may have oscillations in convection-dominated problems. This can again be addressed using stabilization techniques.

\medskip

Duality-based approaches (in the spirit of the approaches presented in Section~\ref{section:CRE}) can also be employed for advection-diffusion-reaction problems~\cite{ERN10b,ERN15}. Defining the flux as $\bq = \Aa \nab u - \bc \, u$, the CRE framework provides for a guaranteed estimate of the global error measured using the norm $\vertiii{v}_{\oplus} := \vertiii{v} + \vertiii{v}_{\rm skew}$. It can be shown that this norm, which is an augmented energy norm, is equivalent to the dual norm of the residual $R(v) = F(v)-B(u_h,v)$.

\begin{remark}
When the hybrid-flux technique of Section~\ref{section:hybridfluxapp} is considered to compute an admissible flux field, the associated prolongation condition should be consistent with the discretization of the problem (in particular when stabilization is used) in order to recover required properties, and in particular the solvability condition on the linear system~\eqref{eq:helas3}. We refer to~\cite{PAR13} for details. 
\end{remark}

\subsubsection{Goal-oriented error estimation}

The splitting between the symmetric and the skew symmetric parts of $B$ can also be used for goal-oriented error estimation~\cite{PAR09b,PAR13,SAU06}. Introducing the adjoint problem $B(v,\widetilde{u}) = Q(v)$ for any $v \in V$ (possibly solved in practice using a stabilization technique), that is
\begin{align*}
-\dive (\Aa \nab \widetilde{u}) - \bc \cdot \nab \widetilde{u} + (r-\dive \bc) \, \widetilde{u} &= \widetilde{f}_Q -\dive (\widetilde{\bq}_Q + \Aa \nab \widetilde{u}_Q) \quad \text{in $\Omega$}, \\ 
\widetilde{u}&=0 \quad \text{on $\Gamma_D$}, \\ 
\Aa\nab \widetilde{u} \cdot \bn + \bc \cdot \bn \, \widetilde{u} &= \widetilde{g}_Q + (\widetilde{\bq}_Q + \Aa \nab \widetilde{u}_Q) \cdot \bn \quad \text{on $\Gamma_N$},
\end{align*}
where $\widetilde{\bq}_Q$, $\widetilde{f}_Q$ and $\widetilde{g}_Q$ are the extraction functions used in the definition~\eqref{eq:rep_Q} of the quantity of interest $Q$, we get
\begin{equation*}
Q(e)=B(e,\widetilde{u})=B(e,\widetilde{e})+R(\widetilde{u}_h),
\end{equation*}
with $R(\widetilde{u}_h)=F(\widetilde{u}_h)-B(u_h,\widetilde{u}_h) \neq 0$ if different discretizations are used for $u_h$ and $\widetilde{u}_h$, or if a stabilized discretization is used for one of the two problems.

\medskip

Denoting by $e^s$ and $\widetilde{e}^s$ the solutions in $V$ to the symmetrized residual equations,
\begin{align*}
\forall v \in V, \qquad B_{\rm symm}(e^s,v)&=F(v)-B(u_h,v), \\
\forall v \in V, \qquad B_{\rm symm}(\widetilde{e}^s,v)&=Q(v)-B(v,\widetilde{u}_h),
\end{align*}
we get that
\begin{equation*}
|Q(u)-Q(u_h)-R(\widetilde{u}_h)| = |B_{\rm symm}(e^s,\widetilde{e}^s)| \le \vertiii{e^s} \ \vertiii{\widetilde{e}^s}.
\end{equation*}
Classical estimates in energy norm (induced by the symmetric part of the operator) can next be used to bound $\vertiii{e^s}$ and $\vertiii{\widetilde{e}^s}$.

\subsection{Time-dependent and nonlinear problems}
\label{sec:non-lin}

For linear time-dependent problems, effective \textit{a posteriori} error estimation tools (both for the global error and for errors on quantities of interest) have been proposed. We refer to~\cite{BER05} for parabolic problems, to~\cite{AUB99,COM99,VER12,WAE12,WIB92} for transient elastodynamics, and to~\cite{LAD89,MAD99,ODE03,WAN16} for vibratory dynamics with error estimates on the eigenfrequencies. A large set of applications of \textit{a posteriori} error estimates for both parabolic and hyperbolic problems is also described in~\cite{ERI95}.

\medskip

For nonlinear problems, there are much fewer contributions than for linear problems. It is important to distinguish between nonlinear {\em time-dependent} and nonlinear {\em time-independent} problems:
\begin{itemize}
\item For the latter case, we mention~\cite{BAB82,LAR02} for the design of estimates for nonlinear elasticity problems (where $\nab u$ is considered small, as in linear elasticity, but where the constitutive relation between stress and strain is nonlinear), \cite{BRI98} for large strain elasticity (where $\nab u$ is not considered to be small), \cite{JOH92} for Hencky-type plasticity problems, \cite{CIR00,GAL96,GHO17,PER94,RAN99} for elastoplasticity, and~\cite{BEN12,COO00,ELB20,LOU03,WEI09} for contact-friction problems.
  \item For the former case, viscoplasticity problems have been considered in~\cite{FOU95,LAR03,PEL00,PIJ95}, Navier-Stokes equations have been addressed in~\cite{AIN00,SEG10,VER96} with residual methods, and nonlinear dynamics has been studied in~\cite{RAD99}. Other nonlinear contexts have also been investigated in~\cite{HUE00}. In most cases, techniques devised for linear problems or time-independent nonlinear problems are used at each time step, and the estimation is thus limited to spatial error. For such nonlinear time-dependent problems, error estimators based on the constitutive law residual have been introduced and take all error sources into account~\cite{LAD00,LAD01,LAD98,LAD99}. They refer to the concept of dissipation error which is constructed from dual convex potentials (in the Legendre-Fenchel sense) that describe the material behavior~\cite{REP99}. This concept also enables to get upper bounds for quantities of interest in complex nonlinear cases~\cite{LAD08,LAD12}.
\end{itemize}

\medskip

Let us also add that specific indicators for various error sources (space/time discretizations, linearization procedure and stopping criteria for iterative algorithms introduced to solve the nonlinear problems, \dots) have also been established, see~\cite{DAB20,ELA11,ERN13,PEL00,VOH13}.

\section{Conclusion}\label{section:conclusion}

We have presented a review on \textit{a posteriori} error estimation tools applied to linear elliptic problems solved with FEM. On the one hand, we have shown that inexpensive methods (such as explicit residual methods) may be sufficient to obtain an indication on the discretization error or to drive mesh adaptation. On the other hand, we have emphasized that more advanced error estimators, which require more expensive computations, are able to provide accurate information on the error value. We have also highlighted that dual analysis and equilibrium concepts are at the heart of all robust estimates that provide guaranteed error bounds, and that can be extended to general engineering problems. For the sake of brevity, we have often limited ourselves in terms of technical details. We refer the interested reader to reference textbooks on this topic, such as~\cite{AIN00,BAB01,LAD04,STE03,VER96,VER13}.

In addition to FEM, verification has now become a challenging issue in various innovating numerical strategies that have emerged during the last decades. In this context, some first tools have been developed to assess discretization errors in association with errors coming from other sources. These tools use concepts similar to those presented in this article. Among all \textit{a posteriori} error estimates that have appeared in these new contexts, we wish to cite those related to
\begin{itemize}
\item mixed FE discretizations~\cite{BRA95,VOH07};
\item domain decomposition techniques~\cite{BER02,PAR10,REY14};
\item virtual element methods~\cite{BEI15,BER17,CAN17};
\item solution to stochastic problems~\cite{BAB05,BUT11,CHA12,LAD06,MAC15,ODE05,SCA19};
\item fracture mechanics, possibly with enrichment techniques such as XFEM~\cite{GAL06,PAN10,ROD08,RUT06,STO98,STR00,XUA06};
\item isogeometric analysis~\cite{BAZ06,BUF16,DOR10,KLE15,KUM17,KUR14,THA19};
\item the use of surrogate mathematical models, leading to modeling errors when compared to a reference model~\cite{BAB96b,ODE02,TIR20};
\item the use of reduced order modeling by means of Reduced Basis methods~\cite{GRE05,HOA18,MAC01,POR85,PRU02,QUA11,ROV06,ROZ08}, Proper Generalized Decomposition~\cite{ALL18,AMM10,CHA16,CHA17,CHA19,LAD11,REI20} or alternative methods~\cite{AGG17,CHA15,EKR20,KER14};
\item multiscale modeling as in the Heterogeneous Multiscale Method (HMM)~\cite{ABD13,ABD09,JHU12}, the Variational Multiscale Method (VMM)~\cite{LAR07}, the Generalized Finite Element Method (GFEM)~\cite{STR06} or the Multiscale Finite Element Method (MsFEM)~\cite{CHA18,CHA21,CHU16,HEN14}.
\end{itemize}

\appendix

\section{Appendix: Proof of the lower bound for CRE error estimates}\label{section:appendixA}

We have seen in Section~\ref{sec:cre} that the Constitutive Relation Error (CRE) estimate is an upper bound on the numerical error. More precisely, we have seen (see~\eqref{eq:upper} or~\eqref{eq:propertiesCRE2}) that
$$
\vertiii{ u - u_h }^2 \leq E_{\rm CRE}^2(u_h,\bp)
$$
for any statically admissible $\bp \in W$. We show here a converse inequality. In contrast to some other methods (see e.g.~\eqref{eq:LBres2} for the subdomain residual methods described in Section~\ref{section:locpbpatches}), obtaining such a lower bound is not straightforward. Establishing this lower bound is also enlightning because the proof below uses the specificities of the construction of the equilibrated flux field (in contrast to the above upper bound).

For the sake of simplicity, we consider the problem~\eqref{eq:refpbstrong} in dimension $d=2$, although our arguments carry over to the three-dimensional case. We assume that we use a mesh made of triangles, and that the mesh is regular in the sense that
\begin{itemize}
\item the number of triangles in $\Omega_i$ (the set of elements having $i$ as a vertex) is bounded independently of the mesh size $h$; 
\item in the union $\dis \cup_{K \in \mT_h} \cup_{i \in K}\Omega_i$, each triangle is accounted for a number of times which is bounded from above independently of $h$. 
\end{itemize}
We also assume that we use P1 finite elements, and denote $\{ \phi_i \}$ the basis functions. Our arguments presumably carry over to more general cases, at the possible price of additional work.

\begin{theorem}\label{theo:borneinf}
Under the above assumptions, let $u$ be the solution to~\eqref{eq:refpb} and $\widehat{\bq}_h \in [L^2(\Omega)]^2$ be the equilibrated flux field constructed with the hybrid-flux technique (see Section~\ref{section:hybridfluxapp}). We assume that the numerical solution $u_h\in H^1(\Omega)$ to~\eqref{eq:FEpb} is such that
\begin{equation}
\label{eq:hyp1}
\text{on any triangle $K$, \quad $\dive (\Aa \nab u_h)=0$}.
\tag{H1}
\end{equation}
Denoting $\bq_h = \Aa \nab u_h$, we also assume that
\begin{equation}
  \label{eq:hyp2}
  \begin{array}{c}
    \text{on any edge $\Gamma_{\alpha,\beta}$ separating the elements $K_\alpha$ and $K_\beta$}, \\
    \text{$(\bq_{h|K_{\alpha}}+\bq_{h|K_{\beta}})\cdot \bn$ is constant}
  \end{array}
\tag{H2}
\end{equation}
and that
\begin{equation}
  \label{eq:hyp3}
  \begin{array}{c}
    \text{on any edge $\Gamma_{\alpha,\beta}$ separating the elements $K_\alpha$ and $K_\beta$}, \\
    \text{$(\bq_{h|K_{\alpha}}-\bq_{h|K_{\beta}})\cdot \bn$ is constant}.
  \end{array}
\tag{H3}
\end{equation}
We furthermore assume that
\begin{equation}
\label{eq:hyp4}
\text{on any triangle $K$, \quad $f$ is constant}.
\tag{H4}
\end{equation}
Then, there is a constant $C$ independent of $h$ such that
\begin{equation}\label{eq:borneinf}
E_{\rm CRE}^2(u_h,\widehat{\bq}_h) \leq C \, \vertiii{ u - u_h }^2.
\end{equation}
\end{theorem}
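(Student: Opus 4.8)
The plan is to exploit the Prager--Synge equality~\eqref{eq:propertiesCRE2}, which gives $2\,E_{\rm CRE}^2(u_h,\widehat{\bq}_h) = \vertiii{e}^2 + \vertiii{\bq-\widehat{\bq}_h}_q^2$, so that the claimed bound~\eqref{eq:borneinf} is equivalent to the efficiency estimate $\vertiii{\bq-\widehat{\bq}_h}_q \le C\,\vertiii{e}$. By the triangle inequality and $\vertiii{e}=\vertiii{\bq-\bq_h}_q$, this in turn amounts to proving $\vertiii{\widehat{\bq}_h-\bq_h}_q \le C\,\vertiii{e}$, i.e. that the reconstructed flux does not stray from $\bq_h$ by more than the discretization error. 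First I would localise this quantity as $\dis \vertiii{\widehat{\bq}_h-\bq_h}_q^2 = \sum_{K\in\mT_h}\vertiii{\widehat{\bq}_{h|K}-\bq_{h|K}}_{q,K}^2$ and work element by element.

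The second step is to recast each local contribution through the local problem~\eqref{eq:dualelempb}. Writing $\widehat{\bq}_{h|K}=\Aa\nab w_K$ and integrating $-\dive\bq=f$ by parts on $K$, one sees that $w_K-u$ solves a pure Neumann problem on $K$ whose only data is the traction mismatch $\widehat{g}_K-\bq\cdot\bn$ on $\partial K$:
\[
\forall v\in H^1(K),\qquad \int_K \Aa\nab(w_K-u)\cdot\nab v = \int_{\partial K}\big(\widehat{g}_K-\bq\cdot\bn\big)\,v.
\]
A standard lifting/trace argument, together with a scaling to the reference element using the mesh regularity, then yields $\vertiii{\widehat{\bq}_{h|K}-\bq}_{q,K}\le C\,\big\|\widehat{g}_K-\bq\cdot\bn\big\|_{H^{-1/2}(\partial K)}$. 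A further triangle inequality thus reduces the whole problem to controlling this dual norm of the traction mismatch by the local flux error $\vertiii{\bq-\bq_h}_{q,K}$.

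The heart of the argument is the algebraic link between the equilibrated tractions and the exact flux. Testing the Neumann problem and the strong prolongation condition~\eqref{eq:prolong} with $v=\varphi_i$, and comparing with the exact identity $\dis\int_{\partial K}(\bq\cdot\bn)\,\varphi_i=\int_K\bq\cdot\nab\varphi_i-\int_K f\,\varphi_i$ obtained by integration by parts, produces the fundamental moment identity
\[
\int_{\partial K}\big(\widehat{g}_K-\bq\cdot\bn\big)\,\varphi_i = \int_K(\bq_h-\bq)\cdot\nab\varphi_i,
\]
whose right-hand side is bounded by $\vertiii{\bq-\bq_h}_{q,K}$ (in $d=2$, since $\|\nab\varphi_i\|_{0,K}=O(1)$). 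This controls every $P_1$ moment of the traction mismatch by the local error. To upgrade these moments to the required $H^{-1/2}(\partial K)$ estimate, I would split a test function as $v=\Pi_h^1 v+(v-\Pi_h^1 v)$: on the interpolated part the moment identity applies directly, while the remainder is handled by trace and approximation estimates, using the hypotheses~\eqref{eq:hyp1}--\eqref{eq:hyp4} to ensure that $\widehat{g}_\Gamma$, $\bq_h\cdot\bn$ and $f$ are piecewise affine/constant (so that no data-oscillation term survives). Crucially, since the tractions produced by the vertex-patch systems~\eqref{eq:helas3} are only determined up to the underdetermined freedom, one must invoke the explicit minimal-distance-to-$b^m$ choice of Remark~\ref{rem:underdetermined} to pin down the free component and keep it close to the average numerical flux.

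This is exactly where I expect the \emph{main obstacle} to lie: the moment identity only constrains the low-order part of the mismatch, so the delicate point is to show that, under~\eqref{eq:hyp1}--\eqref{eq:hyp4} and the minimal-norm choice of tractions, the full $H^{-1/2}(\partial K)$ norm is still governed by the computed $P_1$ moments, i.e. that the equilibration procedure introduces no spurious high-frequency content uncontrolled by the error. Once the local estimate $\vertiii{\widehat{\bq}_{h|K}-\bq}_{q,K}\le C\,\vertiii{\bq-\bq_h}_{q,\omega_K}$ is established over the relevant vertex patch $\omega_K$, the final step is routine: summing over $K$ and invoking the finite-overlap mesh-regularity assumptions gives $\dis\sum_K\vertiii{\bq-\bq_h}_{q,\omega_K}^2\le C\,\vertiii{e}^2$, whence $\vertiii{\widehat{\bq}_h-\bq_h}_q\le C\,\vertiii{e}$ and finally~\eqref{eq:borneinf}.
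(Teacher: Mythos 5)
Your reduction to $\vertiii{\widehat{\bq}_h-\bq_h}_q\le C\,\vertiii{e}$ and your moment identity $\int_{\partial K}(\widehat{g}_K-\bq\cdot\bn)\,\varphi_i=\int_K(\bq_h-\bq)\cdot\nab\varphi_i$ are both correct, and you have correctly located where the difficulty sits. But there is a genuine gap, in two places. First, the obstacle you flag cannot be resolved in the form you state it: the full $H^{-1/2}(\partial K)$ norm of $\widehat{g}_K-\bq\cdot\bn$ is \emph{not} governed by its $P_1$ moments, because $\bq\cdot\bn$ carries arbitrary high-frequency content invisible to those moments; moreover the element-level identity only yields, for each vertex $i$, the \emph{sum} of the moments over the two edges of $K$ meeting at $i$, not the individual edge moments. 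The paper's proof instead measures the deviation of $\widehat{g}_\Gamma$ from the \emph{averaged numerical flux} $b^m_{\alpha,\beta}$ of~\eqref{eq:def_b_m}: this deviation is a $P_1$ function on each edge, hence genuinely determined by its two edge moments $\widehat{d}_{\alpha,\beta}(i_1)$ and $\widehat{d}_{\alpha,\beta}(i_2)$, and those moments are obtained by explicitly solving the vertex-patch systems~\eqref{eq:helas3} with the minimal-norm choice and bounding the result by the data $\int_{K}f\phi_i$ and the flux-jump integrals (Steps~1--2 of the paper, culminating in~\eqref{eq:fin1} and~\eqref{eq:fin2}). You gesture at this (``invoke the minimal-distance-to-$b^m$ choice''), but that quantitative analysis is precisely what has to be carried out and is absent from the proposal.

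Second, and more seriously, your argument never bounds the residual data by the error. After the equilibration analysis one is left with the element residual $f$ (since $\dive\bq_h=0$ by~\eqref{eq:hyp1}) and the normal-flux jumps $j_{\alpha,\beta}$ as data in the estimate for $\boldsymbol{\Sigma}_h=\widehat{\bq}_h-\bq_h$; the inequalities $h\,\|f\|_{0,K}\le C\,\|\Aa^{1/2}\nab(u-u_h)\|_{0,K}$ and $h^{1/2}\|j_{\alpha,\beta}\|_{0,\Gamma_{\alpha,\beta}}\le C\,\|\Aa^{1/2}\nab(u-u_h)\|_{0,K_\alpha\cup K_\beta}$ are the engine of the whole lower bound and are proved with interior and edge bubble functions (Steps~4--5 of the paper); this is exactly where~\eqref{eq:hyp1}--\eqref{eq:hyp4} are used. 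Your proposal replaces this with the assertion that under these hypotheses ``no data-oscillation term survives''; that is a different and much weaker statement (it only says that $f$ coincides with its elementwise mean, not that $f$ or the jumps are small relative to the error), and without the bubble-function estimates the argument cannot close.
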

Of course, Assumptions~\eqref{eq:hyp2} and~\eqref{eq:hyp3} are equivalent to the fact that, on each edge $\Gamma_{\alpha,\beta}$ separating $K_\alpha$ and $K_\beta$, the normal fluxes $\bq_{h|K_{\alpha}} \cdot \bn$ and $\bq_{h|K_{\beta}} \cdot \bn$ are constant. We note that assumptions~\eqref{eq:hyp1}, \eqref{eq:hyp2} and~\eqref{eq:hyp3} are satisfied as soon as $\Aa$ is constant on each triangle $K$ (recall that we use P1 elements). Our proof of Theorem~\ref{theo:borneinf} essentially follows the arguments of~\cite{LAD83}. In Corollary~\ref{coro:erc_autre} below, we relax Assumption~\eqref{eq:hyp4}.

\medskip

\begin{proof}
The proof of Theorem~\ref{theo:borneinf} falls in 6 steps.

\medskip

\noindent \textbf{Step 1.} We follow the notations introduced in Section~\ref{section:hybridfluxapp}. Consider the local linear system~\eqref{eq:helas3} associated with an interior node $i$ and define the quantity
\begin{equation*}
\widehat{d}_{\alpha,\beta}(i) = \widehat{b}_{\alpha,\beta}(i) - b^m_{\alpha,\beta}(i) 
\end{equation*}
where $\alpha$ and $\beta$ are two adjacent triangles in $\Omega_i$ and $\widehat{b}_{\alpha,\beta}(i)$ (resp. $b^m_{\alpha,\beta}(i)$) is defined by~\eqref{eq:def_b_hat} (resp.~\eqref{eq:def_b_m}).

Let $N_i$ denote the number of triangles in $\Omega_i$. We infer from~\eqref{eq:helas3} (recall that $\phi_i$ are the piecewise affine basis functions) and~\eqref{eq:hyp1} that
\begin{align*}
& \widehat{d}_{1,2}(i)-\widehat{d}_{N_i,1}(i) 
\\
&= 
\left( \widehat{b}_{1,2}(i) - b^m_{1,2}(i) \right) - \left( \widehat{b}_{N_i,1}(i) - b^m_{N_i,1}(i) \right) 
\\
&= 
Q_i^{K_1} - \left(b^m_{1,2}(i)-b^m_{N_i,1}(i)\right) 
\\
&= 
\int_{K_1} (\bq_h \cdot \nab \phi_i - f\phi_i)
\\
& \qquad - \frac{1}{2} \left( \int_{\Gamma_{1,2}} \phi_i \, (\bq_{h|K_1}+\bq_{h|K_2}) \cdot \bn_{K_1}- \int_{\Gamma_{N_i,1}} \phi_i \, (\bq_{h|K_{N_i}}+\bq_{h|K_1})\cdot \bn_{K_{N_i}} \right)
\\
&= \int_{\Gamma_{1,2}}\phi_i \, \bq_{h|K_1}\cdot \bn_{K_1}+\int_{\Gamma_{N_i,1}}\phi_i \, \bq_{h|K_1}\cdot \bn_{K_1}-\int_{K_1}f\phi_i
\\
& \qquad - \frac{1}{2} \left( \int_{\Gamma_{1,2}} \phi_i \, (\bq_{h|K_1}+\bq_{h|K_2})\cdot \bn_{K_1}-\int_{\Gamma_{N_i,1}} \phi_i \, (\bq_{h|K_{N_i}}+\bq_{h|K_1})\cdot \bn_{K_{N_i}} \right)
\\
&= -\int_{K_1}f\phi_i
\\
& \qquad + \frac{1}{2}\int_{\Gamma_{1,2}}\phi_i \, (\bq_{h|K_1}-\bq_{h|K_2})\cdot \bn_{K_1}+\frac{1}{2}\int_{\Gamma_{N_i,1}}\phi_i \, (\bq_{h|K_{N_i}}-\bq_{h|K_1})\cdot \bn_{K_{N_i}}.
\end{align*}
Similarly, we compute that
\begin{multline*}
  \widehat{d}_{2,3}(i)-\widehat{d}_{1,2}(i) = -\int_{K_2} f\phi_i
  \\
  + \frac{1}{2}\int_{\Gamma_{2,3}}\phi_i \, (\bq_{h|K_2}-\bq_{h|K_3})\cdot \bn_{K_2}+\frac{1}{2}\int_{\Gamma_{1,2}}\phi_i \, (\bq_{h|K_1}-\bq_{h|K_2})\cdot \bn_{K_1},
\end{multline*}
and likewise for any $\widehat{d}_{\alpha,\alpha+1}(i)-\widehat{d}_{\alpha-1,\alpha}(i)$ (with $3 \leq \alpha \leq N_i-1$), until
\begin{multline*}
  \widehat{d}_{N_i,1}(i)-\widehat{d}_{N_i-1,N_i}(i) = -\int_{K_{N_i}}f\phi_i
  \\
  + \frac{1}{2}\int_{\Gamma_{N_i,1}}\phi_i \, (\bq_{h|K_{N_i}}-\bq_{h|K_1})\cdot \bn_{K_{N_i}}+\frac{1}{2}\int_{\Gamma_{N_i-1,N_i}}\phi_i \, (\bq_{h|K_{N_i-1}}-\bq_{h|K_{N_i}})\cdot \bn_{K_{N_i-1}}.
\end{multline*}
We introduce
\begin{equation}\label{eq:def_j}
j_{\alpha,\beta}=\frac{1}{2}(\bq_{h|K_{\alpha}}-\bq_{h|K_{\beta}})\cdot \bn_{K_{\alpha}}, \qquad J_{\alpha,\beta}(i)=\int_{\Gamma_{\alpha,\beta}}\phi_i \, j_{\alpha,\beta},
\end{equation}
and remark that $j_{\beta,\alpha}=j_{\alpha,\beta}$ and $J_{\beta,\alpha}(i) = J_{\alpha,\beta}(i)$. Using these notations, we write the local linear system as
\begin{equation}\label{eq:localsystemd}
\begin{aligned}
\widehat{d}_{1,2}(i)-\widehat{d}_{N_i,1}(i) &= -\int_{K_1}f\phi_i + J_{1,2}(i) + J_{N_i,1}(i), \\
\widehat{d}_{2,3}(i)-\widehat{d}_{1,2}(i) &=-\int_{K_2}f\phi_i + J_{2,3}(i) + J_{1,2}(i), \\
\vdots &= \vdots \\
\widehat{d}_{N_i,1}(i)-\widehat{d}_{N_i-1,N_i}(i) &=-\int_{K_{N_i}}f\phi_i + J_{N_i,1}(i) + J_{N_i-1,N_i}(i).
\end{aligned}
\end{equation}
Just like the system~\eqref{eq:helas3}, the system~\eqref{eq:localsystemd} has an infinity of solutions. As explained in Remark~\ref{rem:underdetermined}, the one minimizing $\dis \sum_{\alpha,\beta} \frac{(\widehat{d}_{\alpha,\beta}(i))^2}{|\Gamma_{\alpha,\beta}|^2}$ is chosen in practice. The constrained minimization leads to an explicit solution of the form (see Remark~\ref{rem:underdetermined} and~\cite{LAD04})
$$
\widehat{d}_{N_i,1}(i) = s, \qquad \widehat{d}_{1,2}(i) = s + \overline{Q}_i^{K_1}, \qquad \widehat{d}_{2,3}(i) = s + \overline{Q}_i^{K_1} + \overline{Q}_i^{K_2}, \qquad \dots
$$
with $\dis \overline{Q}_i^{K_\alpha} = -\int_{K_\alpha} f \phi_i + J_{\alpha,\alpha+1}(i) + J_{\alpha-1,\alpha}(i)$ and $\dis s = - \frac{\sum_{\alpha,\beta} |\Gamma_{\alpha,\beta}|^{-2} \, \overline{Q}_i^{K_\beta}}{\sum_{\alpha,\beta} |\Gamma_{\alpha,\beta}|^{-2}}$.

Using the regularity of the mesh, we deduce that there exists $C$ independent of $h$ and $i$ such that $\dis |s| \leq C \sum_{n \in \Omega_i} \left| \overline{Q}_i^{K_n} \right|$, which implies that
$$
\left| \widehat{d}_{\alpha,\beta}(i) \right| \leq C \sum_{n \in \Omega_i} \left| \overline{Q}_i^{K_n} \right|.
$$
We thus get that there exists a constant $C$ independent of $h$, $i$, $\alpha$ and $\beta$ such that
\begin{equation}\label{eq:fin1}
\left| \widehat{d}_{\alpha,\beta}(i) \right| \leq C\left[ \sum_{n \in \Omega_i} \left| \int_{K_n} f \phi_i \right| + \sum_{n,m \in \Omega_i} \left| J_{n,m}(i) \right| \right].
\end{equation}

\bigskip

\noindent \textbf{Step 2.} On the edge linking vertices $i_1$ and $i_2$ (at the interface between triangles $K_\alpha$ and $K_\beta$), we wish to find $c_{i_1}$ and $c_{i_2}$ such that the function
$$
\sigma_{\Gamma_{\alpha,\beta},K_\alpha} \, \widehat{g}_{\Gamma_{\alpha,\beta}} = c_{i_1} \, \phi_{i_1} + c_{i_2} \, \phi_{i_2}
$$
satisfies
\begin{equation}
  \label{eq:helas9}
\int_{\Gamma_{\alpha,\beta}} \phi_{i_1} \, \sigma_{\Gamma_{\alpha,\beta},K_\alpha} \, \widehat{g}_{\Gamma_{\alpha,\beta}} = \widehat{b}_{\alpha,\beta}(i_1), 
\quad
\int_{\Gamma_{\alpha,\beta}} \phi_{i_2} \, \sigma_{\Gamma_{\alpha,\beta},K_\alpha} \, \widehat{g}_{\Gamma_{\alpha,\beta}} = \widehat{b}_{\alpha,\beta}(i_2).
\end{equation}
Introduce
\begin{align}
\widehat{G}_{i_1,i_2}
&=
\sigma_{\Gamma_{\alpha,\beta},K_\alpha} \, \widehat{g}_{\Gamma_{\alpha,\beta}} - \frac{1}{2} \left(\bq_{h|K_\alpha}+\bq_{h|K_\beta}\right) \cdot \bn_{K_\alpha}
\nonumber
\\
&=
\left( c_{i_1}-q_h^{\alpha \beta} \right)\phi_{i_1} + \left( c_{i_2}-q_h^{\alpha \beta} \right)\phi_{i_2}
\label{eq:def_Ghat}
\end{align}
where $\dis q_h^{\alpha \beta} = \frac{1}{2}\left(\bq_{h|K_\alpha}+\bq_{h|K_\beta}\right)\cdot \bn_{K_\alpha}$ is a constant (Assumption~\eqref{eq:hyp2}) and we recall that $\phi_{i_1} + \phi_{i_2} = 1$ on the edge. It is straightforward to observe that~\eqref{eq:helas9} is equivalent to
\begin{equation}\label{eq:sys_G}
\begin{cases}
\dis \int_{\Gamma_{\alpha,\beta}} \phi_{i_1} \, \widehat{G}_{i_1,i_2} = \widehat{b}_{\alpha,\beta}(i_1) - b^m_{\alpha,\beta}(i_1) = \widehat{d}_{\alpha,\beta}(i_1),
\\ \noalign{\vskip 3pt}
\dis \int_{\Gamma_{\alpha,\beta}} \phi_{i_2} \, \widehat{G}_{i_1,i_2} = \widehat{b}_{\alpha,\beta}(i_2) - b^m_{\alpha,\beta}(i_2) = \widehat{d}_{\alpha,\beta}(i_2).
\end{cases}
\end{equation}
Collecting~\eqref{eq:def_Ghat} and~\eqref{eq:sys_G}, we obtain
\begin{equation}
  \label{eq:helas10}
\left( \begin{array}{cc}
\dis \int_{\Gamma_{\alpha,\beta}} \phi_{i_1}^2 & \dis \int_{\Gamma_{\alpha,\beta}} \phi_{i_1} \phi_{i_2}
\\
\dis \int_{\Gamma_{\alpha,\beta}} \phi_{i_1} \phi_{i_2} & \dis \int_{\Gamma_{\alpha,\beta}} \phi_{i_2}^2
\end{array} \right)
\left( \begin{array}{c}
c_{i_1} - q_h^{\alpha\beta} 
\\
c_{i_2} - q_h^{\alpha\beta} 
\end{array} \right)
=
\left( \begin{array}{c}
\widehat{d}_{\alpha,\beta}(i_1)
\\
\widehat{d}_{\alpha,\beta}(i_2)
\end{array} \right),
\end{equation}
a system the explicit solution of which is
\begin{equation}\label{eq:yes4}
\hspace{-1mm} \left( \begin{array}{c}
c_{i_1} - q_h^{\alpha\beta} 
\\
c_{i_2} - q_h^{\alpha\beta} 
\end{array} \right) = \frac{1}{D}
\left( \begin{array}{cc}
\dis \int_{\Gamma_{\alpha,\beta}} \phi_{i_2}^2 & \dis -\int_{\Gamma_{\alpha,\beta}} \phi_{i_1} \phi_{i_2}
\\
\dis -\int_{\Gamma_{\alpha,\beta}} \phi_{i_1} \phi_{i_1} & \dis \int_{\Gamma_{\alpha,\beta}} \phi_{i_2}^2
\end{array} \right)
\left( \begin{array}{c}
\widehat{d}_{\alpha,\beta}(i_1)
\\
\widehat{d}_{\alpha,\beta}(i_2)
\end{array} \right),
\end{equation}
with $D$ the determinant of the $2 \times 2$ matrix in~\eqref{eq:helas10}. In dimension $d=2$, the terms of the matrix in~\eqref{eq:helas10} are of the order of $O(h)$, hence $D$ is of the order of $O(h^2)$ (in dimension $d$, the matrix appearing in~\eqref{eq:helas10} is of size $d \times d$, and each of its term is of the order of $O(h^{d-1})$; its determinant is thus of the order of $O(h^{d(d-1)})$, and its cofactors, appearing in~\eqref{eq:yes4}, are of the order of $O(h^{(d-1)^2})$). Introducing these scalings, we get from~\eqref{eq:yes4} that
\begin{equation*}
\left|c_{i_1} - q_h^{\alpha\beta} \right| + \left| c_{i_2} - q_h^{\alpha\beta}\right| \leq \frac{C}{h^{d-1}} \Big( \left| \widehat{d}_{\alpha,\beta}(i_1) \right| + \left| \widehat{d}_{\alpha,\beta}(i_2) \right| \Big),
\end{equation*}
and therefore
\begin{equation}\label{eq:fin2}
\int_{\Gamma_{\alpha,\beta}} \left| \widehat{G}_{i_1,i_2} \right|^2 
\leq 
\frac{C}{h^{d-1}} \Big( \left| \widehat{d}_{\alpha,\beta}(i_1) \right| + \left| \widehat{d}_{\alpha,\beta}(i_2) \right| \Big)^2. 
\end{equation}

\bigskip

\noindent \textbf{Step 3.} We introduce $\boldsymbol{\Sigma}_h = \widehat{\bq}_h-\bq_h$, where $\widehat{\bq}_h$ is the equilibrated field constructed in Section~\ref{section:hybridfluxapp} and $\bq_h = \Aa \nab u_h$. We recall that, for any triangle $K_\alpha$,
\begin{equation*}
-\dive \widehat{\bq}_h = f \ \ \text{in $K_\alpha$}, \qquad
\widehat{\bq}_h \cdot \bn_{K_\alpha} = \sigma_{\Gamma_{\alpha,\beta},K_\alpha} \, \widehat{g}_{\Gamma_{\alpha,\beta}} \ \ \text{on the edges $\Gamma_{\alpha,\beta} \subset \partial K_\alpha$}
\end{equation*}
where $\bn_{K_\alpha}$ is the outgoing normal vector to $K_\alpha$. Among all equilibrated flux fields which verify the previous condition, we select the one that minimizes the complementary energy (or CRE functional) over $K_\alpha$ so that, by duality, it derives from a primal field $\widehat{u}_h$. We thus choose $\widehat{\bq}_h = \Aa \nab \widehat{u}_h$ where $\widehat{u}_h$ is a solution to
\begin{equation*}
-\dive \Aa \nab \widehat{u}_h = f \ \text{in $K_\alpha$}, \quad
\Aa \nab \widehat{u}_h \cdot \bn_{K_\alpha} = \sigma_{\Gamma_{\alpha,\beta},K_\alpha} \, \widehat{g}_{\Gamma_{\alpha,\beta}} \ \text{on the edges $\Gamma_{\alpha,\beta} \subset \partial K_\alpha$},
\end{equation*}
and is of course uniquely defined up to an additive constant. We choose it such that
\begin{equation}\label{eq:choix}
\int_{K_\alpha} \widehat{u}_h = \int_{K_\alpha} u_h.
\end{equation} 
Using Assumption~\eqref{eq:hyp1}, we see that $\boldsymbol{\Sigma}_h$ verifies
\begin{equation}\label{eq:yes1}
-\dive \boldsymbol{\Sigma}_h = f \quad \text{in $K_\alpha$}.
\end{equation}
Furthermore, on the edge $\Gamma_{\alpha,\beta} \subset \partial K_\alpha$ linking vertices $i_1$ and $i_2$, we have
\begin{equation}\label{eq:yes2}
\boldsymbol{\Sigma}_h \cdot \bn_{K_\alpha} = \sigma_{\Gamma_{\alpha,\beta},K_\alpha} \, \widehat{g}_{\Gamma_{\alpha,\beta}} - \bq_{h|K_\alpha}\cdot \bn_{K_\alpha} = \widehat{G}_{i_1,i_2} - j_{\alpha,\beta},
\end{equation}
where $\widehat{G}_{i_1,i_2}$ and $j_{\alpha,\beta}$ are defined in~\eqref{eq:def_Ghat} and~\eqref{eq:def_j}, respectively.

Using~\eqref{eq:yes1}, we get
\begin{align*}
  \int_{K_\alpha} \boldsymbol{\Sigma}_h^T \Aa^{-1}\boldsymbol{\Sigma}_h &= \int_{K_\alpha} \boldsymbol{\Sigma}_h \cdot \nab(\widehat{u}_h-u_h) \\
  &= \int_{\partial K_\alpha}\boldsymbol{\Sigma}_h \cdot \bn_{K_\alpha}(\widehat{u}_h-u_h)-\int_{K_\alpha}(\widehat{u}_h-u_h) \dive \boldsymbol{\Sigma}_h \\
&\le \|\boldsymbol{\Sigma}_h \cdot \bn_{K_\alpha}\|_{0,\partial K_\alpha} \|\widehat{u}_h-u_h\|_{0,\partial K_\alpha}+\|f\|_{0,K_\alpha} \|\widehat{u}_h-u_h\|_{0,K_\alpha}.
\end{align*}
Using~\eqref{eq:choix} and scaling arguments similar to those employed in~\cite[Sec.~4.2, eqs.~(4.5) and~(4.3)]{LEB13}, we deduce from the previous inequality that
\begin{align*}
  & \int_{K_\alpha} \boldsymbol{\Sigma}_h\Aa^{-1}\boldsymbol{\Sigma}_h
  \\
  & \le C\sqrt{h}\|\nab (\widehat{u}_h - u_h) \|_{0,K_\alpha} \|\boldsymbol{\Sigma}_h \cdot \bn_{K_\alpha}\|_{0,\partial K_\alpha} + Ch \|\nab (\widehat{u}_h - u_h) \|_{0,K_\alpha} \|f\|_{0,K_\alpha} \\
&\le C\sqrt{h}\|\Aa^{-1/2}\boldsymbol{\Sigma}_h\|_{0,K_\alpha} \|\boldsymbol{\Sigma}_h \cdot \bn_{K_\alpha}\|_{0,\partial K_\alpha} + Ch\|\Aa^{-1/2}\boldsymbol{\Sigma}_h\|_{0,K_\alpha} \|f\|_{0,K_\alpha}.
\end{align*}
It should be noticed that the previous scalings are independent of the dimension $d$. We infer from the above bound that
\begin{equation*}
\int_{K_\alpha} \boldsymbol{\Sigma}_h\Aa^{-1}\boldsymbol{\Sigma}_h \le Ch\|\boldsymbol{\Sigma}_h \cdot \bn_{K_\alpha}\|^2_{0,\partial K_\alpha} + Ch^2\|f\|^2_{0,K_\alpha}.
\end{equation*}
We now use~\eqref{eq:yes2} and obtain
\begin{equation*}
\int_{K_\alpha} \boldsymbol{\Sigma}_h\Aa^{-1}\boldsymbol{\Sigma}_h \le Ch \sum_{\Gamma_{\alpha,\beta} \subset \partial K_\alpha}\int_{\Gamma_{\alpha,\beta}} \left( \left| \widehat{G}_{i_1,i_2} \right|^2 + \left| j_{\alpha,\beta} \right|^2 \right) + Ch^2\|f\|^2_{0,K_\alpha}.
\end{equation*}
Using~\eqref{eq:fin2}, we deduce
\begin{multline}\label{eq:yes5}
\int_{K_\alpha} \boldsymbol{\Sigma}_h\Aa^{-1}\boldsymbol{\Sigma}_h \le 
Ch^{2-d}\sum_{\Gamma_{\alpha,\beta} \subset \partial K_\alpha}
\Big( \left| \widehat{d}_{\alpha,\beta}(i_1) \right| + \left| \widehat{d}_{\alpha,\beta}(i_2) \right| \Big)^2
\\ + Ch\sum_{\Gamma_{\alpha,\beta} \subset \partial K_\alpha} \int_{\Gamma_{\alpha,\beta}}\left| j_{\alpha,\beta} \right|^2
+Ch^2\|f\|^2_{0,K_\alpha}.
\end{multline}
In order to bound the first term above, we are going to use~\eqref{eq:fin1}. We recast that estimate as follows: for any vertex $i$ and any triangles $K_\alpha$ and $K_\beta$ in $\Omega_i$, we have
\begin{align*}
  & \left| \widehat{d}_{\alpha,\beta}(i) \right|
  \\
  &\leq C\left[ \sum_{n \in \Omega_i} \left| \int_{K_n} f \phi_i \right| + \sum_{n,m \in \Omega_i} \left| J_{n,m}(i) \right| \right] \\
&\le C \sum_{n \in \Omega_i} \| f \|_{0,K_n} \| \phi_i \|_{0,K_n} + C \sum_{n,m \in \Omega_i} \| j_{n,m} \|_{0,\Gamma_{n,m}} \| \phi_i \|_{0,\Gamma_{n,m}} \\
&\le C \sqrt{\sum_{n \in \Omega_i} \| f \|^2_{0,K_n}} \ \sqrt{\sum_{n \in \Omega_i} \| \phi_i \|^2_{0,K_n}} 
+C \sqrt{\sum_{n,m \in \Omega_i} \| j_{n,m} \|^2_{0,\Gamma_{n,m}}} \ \sqrt{\sum_{n,m \in \Omega_i} \| \phi_i \|^2_{0,\Gamma_{n,m}} } \\
&\le C h^{d/2} \| f \|_{0,\Omega_i} + C h^{(d-1)/2} \sqrt{\sum_{n,m \in \Omega_i} \| j_{n,m} \|^2_{0,\Gamma_{n,m}}}.
\end{align*}
Inserting this estimate in~\eqref{eq:yes5} yields
\begin{equation}\label{eq:fin3}
\int_{K_\alpha} \boldsymbol{\Sigma}_h\Aa^{-1}\boldsymbol{\Sigma}_h \le Ch^2\sum_{i \in K_\alpha}\| f \|^2_{0,\Omega_i} + Ch\sum_{i \in K_\alpha}\sum_{n,m \in \Omega_i}\| j_{n,m} \|^2_{0,\Gamma_{n,m}}.
\end{equation}

\bigskip

\noindent \textbf{Step 4.} In this step, we bound $\| f \|_{0,K}$ from above by the error $u-u_h$. Let $K$ be a triangle element with vertices $i$, $j$ and $k$. We consider the test function $\widetilde{v} \in V$ defined as $\widetilde{v}(\bx) = \phi_i(\bx) \, \phi_j(\bx) \, \phi_k(\bx)$ over $K$ and $\widetilde{v}(\bx) = 0$ elsewhere (bubble function). Inserting this test function in~\eqref{eq:refpb}, we get
\begin{equation*}
\int_K \Aa \nab u \cdot \nab \widetilde{v} = f_K \int_K \phi_i \, \phi_j \, \phi_k
\end{equation*}
with $f_K$ the constant value of $f$ in $K$ (see Assumption~\eqref{eq:hyp4}). Using an integration by parts, we get
\begin{equation*}
\int_K \Aa \nab u_h \cdot \nab \widetilde{v} =\int_{\partial K} \widetilde{v} \, \Aa \nab u_h \cdot \bn_K - \int_K \widetilde{v} \, \dive (\Aa \nab u_h) = 0
\end{equation*}
where we have used that $\widetilde{v}_{|\partial K}=0$ (to cancel the first term) and Assumption~\eqref{eq:hyp1} (to cancel the second term). We deduce that
\begin{equation*}
f_K\int_K \phi_i \, \phi_j \, \phi_k = \int_K \Aa \nab (u-u_h) \cdot \nab \widetilde{v},
\end{equation*}
which implies that there is a constant $C>0$ independent of $h$ such that
\begin{equation}
\label{eq:yes3}
C \, h^d \, |f_K| \leq \| \Aa^{1/2} \nab \widetilde{v} \|_{0,K} \| \Aa^{1/2} \nab (u-u_h) \|_{0,K}.
\end{equation}
Since the three terms in $\nab \widetilde{v}$ are of the same order of magnitude and $\nab \phi_i$ is constant in $K$, we get
\begin{equation*}
  \| \Aa^{1/2} \nab \widetilde{v} \|^2_{0,K}
  \leq
  C \int_K |\nab \phi_i|^2 \, \phi_j^2 \, \phi_k^2
  \leq
  C h^{-2} \int_K \phi_j^2 \, \phi_k^2
  \leq
  C h^{d-2}.
\end{equation*}
We deduce from~\eqref{eq:yes3} that
\begin{equation*}
| f_K| \le C h^{-d/2-1}\| \Aa^{1/2} \nab (u-u_h) \|_{0,K}
\end{equation*}
and thus
\begin{equation}\label{eq:fin4}
\| f \|^2_{0,K} \leq C h^d f_K^2 \leq C h^{-2}\| \Aa^{1/2} \nab (u-u_h) \|^2_{0,K}.
\end{equation}

\bigskip

\noindent \textbf{Step 5.} In this step, we bound $\| j_{\alpha,\beta} \|_{0,\Gamma_{\alpha,\beta}}$ from above by the error $u-u_h$. We consider the edge $\Gamma_{\alpha,\beta}$, separating the triangles $K_\alpha$ and $K_\beta$, and denote $i_1$ and $i_2$ its vertices. We choose the test function $\widetilde{v}$ defined as $\widetilde{v}(\bx) = \phi_{i_1}(\bx) \, \phi_{i_2}(\bx)$ over $K_\alpha \cup K_\beta$ and $\widetilde{v}(\bx) = 0$ elsewhere. Inserting this test function in~\eqref{eq:refpb}, we get
\begin{equation*}
\int_{K_\alpha \cup K_\beta} \Aa \nab u \cdot \nab \widetilde{v} = \int_{K_\alpha \cup K_\beta} f \, \phi_{i_1} \, \phi_{i_2}. 
\end{equation*}
Considering now the approximate solution $u_h$, we compute, using Assumption~\eqref{eq:hyp1}, that
\begin{align*}
  \int_{K_\alpha \cup K_\beta} \Aa \nab u_h \cdot \nab \widetilde{v}
  &=
  \int_{K_\alpha} \Aa \nab u_h \cdot \nab \widetilde{v} + \int_{K_\beta} \Aa \nab u_h \cdot \nab \widetilde{v}
  \\
  &=
  \int_{\partial K_\alpha} \Aa \nab u_{h|K_\alpha} \cdot\bn_{K_\alpha} \, \widetilde{v} - 
  \int_{K_\alpha} \widetilde{v} \, \dive (\Aa \nab u_h)
  \\
  & \qquad \qquad + \int_{\partial K_\beta} \Aa \nab u_{h|K_\beta}\cdot\bn_{K_\beta} \, \widetilde{v} - 
  \int_{K_\beta}  \widetilde{v} \, \dive (\Aa \nab u_h)
  \\
  &=
  \int_{\Gamma_{\alpha,\beta}} \bq_{h|K_\alpha}\cdot\bn_{K_\alpha} \, \widetilde{v} + 
  \int_{\Gamma_{\alpha,\beta}} \bq_{h|K_\beta}\cdot\bn_{K_\beta} \, \widetilde{v}
  \\
  &=
  2 \int_{\Gamma_{\alpha,\beta}} j_{\alpha,\beta} \, \widetilde{v}.
\end{align*}
Consequently, 
\begin{equation*}
\int_{K_\alpha \cup K_\beta} f \, \phi_{i_1} \, \phi_{i_2} - 2 \int_{\Gamma_{\alpha,\beta}} j_{\alpha,\beta} \, \widetilde{v} = 
\int_{K_\alpha \cup K_\beta} \Aa \nab (u-u_h) \cdot \nab \widetilde{v}.
\end{equation*}
The term $j_{\alpha,\beta}$ being constant on the edge $\Gamma_{\alpha,\beta}$ (see Assumption~\eqref{eq:hyp3}), there is a constant $C>0$ independent of $h$ such that
\begin{multline}
\label{eq:yes3_bis}
C h^{d-1} | j_{\alpha,\beta} | \leq \| \Aa^{1/2} \nab \widetilde{v} \|_{0,K_\alpha \cup K_\beta} \| \Aa^{1/2} \nab (u-u_h) \|_{0,K_\alpha \cup K_\beta} \\ + \| f \|_{0,K_\alpha \cup K_\beta} \| \phi_{i_1} \, \phi_{i_2} \|_{0,K_\alpha \cup K_\beta}.
\end{multline}
Since the two terms in $\nab \widetilde{v}$ are of the same order of magnitude and $\nab \phi_{i_1}$ is constant over each triangle, we get
\begin{equation*}
\| \Aa^{1/2} \nab \widetilde{v} \|^2_{0,K_\alpha \cup K_\beta} \leq
C \int_{K_\alpha\cup K_\beta} |\nab \phi_{i_1}|^2 \, \phi_{i_2}^2 \leq
C h^{-2} \int_{K_\alpha\cup K_\beta} \phi_{i_2}^2 \leq C h^{d-2}.
\end{equation*}
Besides, we have $\| \phi_{i_1} \, \phi_{i_2} \|^2_{0,K_\alpha \cup K_\beta}\leq C h^d$. We hence deduce from~\eqref{eq:yes3_bis} that
\begin{equation} \label{eq:fin5_pre}
h^{d-1} | j_{\alpha,\beta} | \leq C h^{d/2-1} \| \Aa^{1/2} \nab (u-u_h) \|_{0,K_\alpha \cup K_\beta} + C h^{d/2} \| f \|_{0,K_\alpha \cup K_\beta},
\end{equation}
and hence, using~\eqref{eq:fin4},
\begin{align*}
  | j_{\alpha,\beta} |
  &\leq
  C h^{-d/2} \| \Aa^{1/2} \nab (u-u_h) \|_{0,K_\alpha \cup K_\beta} + C h^{1-d/2} \| f \|_{0,K_\alpha \cup K_\beta}
  \\
  &\leq
  C h^{-d/2} \| \Aa^{1/2} \nab (u-u_h) \|_{0,K_\alpha \cup K_\beta}.
\end{align*}
We therefore obtain
\begin{equation}
\label{eq:fin5}
\| j_{\alpha,\beta} \|^2_{0,\Gamma_{\alpha,\beta}} \leq C h^{d-1} | j_{\alpha,\beta} |^2 \leq C h^{-1} \| \Aa^{1/2} \nab (u-u_h) \|^2_{0,K_\alpha \cup K_\beta}.
\end{equation}

\bigskip

\noindent \textbf{Step 6.} Collecting~\eqref{eq:fin3}, \eqref{eq:fin4} and~\eqref{eq:fin5}, we obtain
\begin{align*}
  & \int_{K_\alpha} \boldsymbol{\Sigma}_h\Aa^{-1}\boldsymbol{\Sigma}_h
  \\
  & \le
C\sum_{i \in K_\alpha}\| \Aa^{1/2} \nab (u-u_h) \|^2_{0,\Omega_i}
+ C\sum_{i \in K_\alpha}\sum_{n,m \in \Omega_i}
\| \Aa^{1/2} \nab (u-u_h) \|^2_{0,K_n \cup K_m} \\
&\le C\sum_{i \in K_\alpha}\| \Aa^{1/2} \nab (u-u_h) \|^2_{0,\Omega_i}.
\end{align*}
Using the regularity of the mesh, we deduce that
\begin{equation*}
E^2_{\rm CRE}(u_h,\widehat{\bq}_h) = \int_{\Omega} \boldsymbol{\Sigma}_h\Aa^{-1}\boldsymbol{\Sigma}_h \le C\| \Aa^{1/2} \nab (u-u_h) \|^2_{0,\Omega}.
\end{equation*}
This concludes the proof of Theorem~\ref{theo:borneinf}.
\end{proof}

\bigskip

\begin{corollary}\label{coro:erc_autre}
We work under the assumptions of Theorem~\ref{theo:borneinf}, except Assumption~\eqref{eq:hyp4} (i.e. $f$ constant in each triangle) which is replaced by $f \in H^1(\Omega)$. We then have
\begin{equation}\label{eq:erc_autre_bis}
E_{\rm CRE}^2(u_h,\widehat{\bq}_h) \leq C \| \Aa^{1/2} \nab (u-u_h) \|^2_{0,\Omega} + C h^4 \| \nab f \|^2_{0,\Omega}.
\end{equation}
\end{corollary}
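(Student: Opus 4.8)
The plan is to revisit the six steps of the proof of Theorem~\ref{theo:borneinf} and isolate the single place where the piecewise-constancy Assumption~\eqref{eq:hyp4} was actually used. Inspecting Steps~1 to~3, one checks that the intermediate bound~\eqref{eq:fin3}, namely
$$
\int_{K_\alpha} \boldsymbol{\Sigma}_h\Aa^{-1}\boldsymbol{\Sigma}_h \le Ch^2\sum_{i \in K_\alpha}\| f \|^2_{0,\Omega_i} + Ch\sum_{i \in K_\alpha}\sum_{n,m \in \Omega_i}\| j_{n,m} \|^2_{0,\Gamma_{n,m}},
$$
rests only on Assumptions~\eqref{eq:hyp1} and~\eqref{eq:hyp2} together with the scaling arguments, and therefore continues to hold verbatim when $f \in H^1(\Omega)$ is merely an $H^1$ function. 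The only genuine use of~\eqref{eq:hyp4} occurs in Step~4, in the bubble-function argument producing~\eqref{eq:fin4}; Step~5 invokes~\eqref{eq:hyp4} only through its reliance on~\eqref{eq:fin4}. The whole task thus reduces to establishing a substitute for~\eqref{eq:fin4}.

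To that end I would decompose $f$ on each triangle $K$ as $f = \overline{f}_K + \delta f$, with $\dis \overline{f}_K = |K|^{-1}\int_K f$ and $\int_K \delta f = 0$, and apply the bubble test $\widetilde{v}$ of Step~4 to the constant part $\overline{f}_K$. Since $\widetilde{v}$ vanishes on $\partial K$ and $\dive(\Aa \nab u_h)=0$ on $K$ by~\eqref{eq:hyp1}, inserting $\widetilde{v}$ into~\eqref{eq:refpb} now gives $\dis \overline{f}_K \int_K \widetilde{v} = \int_K \Aa \nab(u-u_h)\cdot\nab\widetilde{v} - \int_K \delta f \, \widetilde{v}$. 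Using $\dis \int_K \widetilde{v} \ge c\,h^d$, the bounds $\| \Aa^{1/2}\nab\widetilde{v}\|_{0,K}^2 \le C h^{d-2}$ and $\|\widetilde{v}\|_{0,K} \le C h^{d/2}$ already available in Step~4, and the Poincar\'e--Wirtinger inequality $\|\delta f\|_{0,K} \le C h \|\nab f\|_{0,K}$ (valid since $\int_K \delta f = 0$), I would obtain $\| \overline{f}_K \|^2_{0,K} \le C h^{-2}\| \Aa^{1/2}\nab(u-u_h)\|^2_{0,K} + C h^2 \|\nab f\|^2_{0,K}$. Adding $\|\delta f\|^2_{0,K} \le C h^2\|\nab f\|^2_{0,K}$ and using the orthogonality $\|f\|^2_{0,K} = \|\overline{f}_K\|^2_{0,K} + \|\delta f\|^2_{0,K}$, this yields the desired replacement of~\eqref{eq:fin4}, namely $\dis \| f \|^2_{0,K} \le C h^{-2}\| \Aa^{1/2} \nab (u-u_h) \|^2_{0,K} + C h^2 \|\nab f\|^2_{0,K}$.

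It then remains to propagate this modified bound through the rest of the argument. Feeding it into~\eqref{eq:fin5_pre} of Step~5 produces, after the same manipulations, the modified edge estimate $\| j_{\alpha,\beta} \|^2_{0,\Gamma_{\alpha,\beta}} \le C h^{-1}\| \Aa^{1/2}\nab(u-u_h)\|^2_{0,K_\alpha \cup K_\beta} + C h^3 \|\nab f\|^2_{0,K_\alpha \cup K_\beta}$, the extra term carrying one more power of $h$. Inserting the two modified bounds into the surviving inequality~\eqref{eq:fin3} and tracking the powers of $h$ — the prefactor $C h^2$ multiplies $h^2\|\nab f\|^2$, while the prefactor $C h$ multiplies $h^3\|\nab f\|^2$, so that both oscillation contributions appear at order $h^4$ — gives, for each triangle $K_\alpha$,
$$
\int_{K_\alpha} \boldsymbol{\Sigma}_h\Aa^{-1}\boldsymbol{\Sigma}_h \le C\sum_{i \in K_\alpha}\| \Aa^{1/2}\nab(u-u_h)\|^2_{0,\Omega_i} + C h^4 \sum_{i \in K_\alpha}\|\nab f\|^2_{0,\Omega_i}.
$$
Summing over all triangles and using the bounded-overlap regularity of the mesh exactly as in Step~6 would then yield~\eqref{eq:erc_autre_bis}.

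The conceptual heart of the argument — and the step I expect to require the most care — is the treatment of the oscillation $\delta f$ in the bubble argument: what is truly exploited there is not any bound on $f$ itself but the \emph{zero-average} property $\int_K \delta f = 0$, which, via Poincar\'e--Wirtinger, turns what could have been an $O(1)$ defect into a genuine higher-order consistency term of size $h^2\|\nab f\|^2_{0,K}$. Once this is secured, the bookkeeping of $h$-powers through Steps~5 and~6 is routine, the $h^{-2}$ and $h^{-1}$ prefactors in~\eqref{eq:fin3} being precisely compensated so that the oscillation contribution lands at the advertised order $h^4$.
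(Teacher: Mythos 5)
Your proposal is correct, and its overall architecture coincides with the paper's: both identify Step~4 (and, through it, Step~5) as the only place where Assumption~\eqref{eq:hyp4} enters, both replace~\eqref{eq:fin4} by the same substitute bound $\| f \|^2_{0,K} \le C h^{-2}\| \Aa^{1/2} \nab (u-u_h) \|^2_{0,K} + C h^2 \|\nab f\|^2_{0,K}$ (this is~\eqref{eq:helas14} in the paper), and the propagation through Steps~5 and~6 with the resulting $h^4$ bookkeeping is identical. Where you genuinely diverge is in how that substitute is obtained. The paper keeps $f$ whole and tests~\eqref{eq:refpb} against $\widetilde{v} = f\,\phi_i\,\phi_j\,\phi_k$, which forces it to prove the weighted Poincar\'e-type inequality~\eqref{eq:helas12}, $\int_K f^2 \leq C ( \int_K f^2 \phi_i\phi_j\phi_k + h^2 \int_K |\nab f|^2 )$, via a compactness/contradiction argument on the reference triangle followed by scaling. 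You instead split $f = \overline{f}_K + \delta f$, test against the plain bubble $\phi_i\phi_j\phi_k$, and control the oscillation by the standard Poincar\'e--Wirtinger inequality $\|\delta f\|_{0,K} \le C h \|\nab f\|_{0,K}$, recombining via the $L^2(K)$-orthogonality of $\overline{f}_K$ and $\delta f$. Your route is more elementary and fully explicit (no non-constructive constant from a contradiction argument, and it only needs $\int_K \widetilde{v} \ge c\,h^d$, which follows from mesh regularity), at the mild cost of carrying the mean/oscillation decomposition; the paper's route avoids the decomposition but pays for it with the auxiliary inequality~\eqref{eq:helas13}. Both yield the same constants structure and the same final estimate~\eqref{eq:erc_autre_bis}.
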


Since we use P1 elements, we note that the first term in the right-hand side of~\eqref{eq:erc_autre_bis} is of the order of $O(h^2)$. The second term, of the order of $O(h^4)$, is hence expected to be much smaller than the first term. 

\begin{proof}
The first three steps of the proof of Theorem~\ref{theo:borneinf} again hold, and we thus have (see~\eqref{eq:fin3})
\begin{equation}\label{eq:fin3_bis}
\int_{K_\alpha} \boldsymbol{\Sigma}_h\Aa^{-1}\boldsymbol{\Sigma}_h \le Ch^2\sum_{i \in K_\alpha}\| f \|^2_{0,\Omega_i} + Ch\sum_{i \in K_\alpha}\sum_{n,m \in \Omega_i}\| j_{n,m} \|^2_{0,\Gamma_{n,m}}.
\end{equation}

We now follow Step~4 of the proof of Theorem~\ref{theo:borneinf}. Let $K$ be a triangle element with vertices $i$, $j$ and $k$. We consider the test function $\widetilde{v} \in V$ defined as $\widetilde{v}(\bx) = f(\bx) \, \phi_i(\bx) \, \phi_j(\bx) \, \phi_k(\bx)$ over $K$ and $\widetilde{v}(\bx) = 0$ elsewhere. We have
\begin{equation*}
\int_K f^2 \, \phi_i \, \phi_j \, \phi_k = \int_K \Aa \nab (u-u_h) \cdot \nab \widetilde{v},
\end{equation*}
hence
$$
\int_K f^2 \, \phi_i \, \phi_j \, \phi_k
\leq
\| \Aa^{1/2} \nab \widetilde{v} \|_{0,K} \| \Aa^{1/2} \nab (u-u_h) \|_{0,K}.
$$
Since we have
$$
\| \Aa^{1/2} \nab \widetilde{v} \|^2_{0,K}
\leq
C \int_K | \nab \widetilde{v} |^2
\leq
C \left( h^{-2} \int_K f^2 + \int_K | \nab f |^2 \right),
$$
we obtain
\begin{equation}
\label{eq:helas11}
\int_K f^2 \, \phi_i \, \phi_j \, \phi_k
\leq
C \left( h^{-1} \| f \|_{0,K} + \| \nab f \|_{0,K} \right) \| \Aa^{1/2} \nab (u-u_h) \|_{0,K}.
\end{equation}
We now claim that there exists a constant $C$, independent of $h$ and $f$, such that
\begin{equation}
\label{eq:helas12}
\int_K f^2 \leq C \left( \int_K f^2 \, \phi_i \, \phi_j \, \phi_k + h^2 \int_K | \nab f |^2 \right).
\end{equation}
To prove~\eqref{eq:helas12}, we first argue on the unit triangle $\overline{K}$. Consider a function $\overline{\theta} \in L^1(\overline{K})$ such that $\overline{\theta} \geq 0$ on $\overline{K}$ and $\dis \int_{\overline{K}} \overline{\theta} > 0$. Then there exists $C$ such that, for any $g \in H^1(\overline{K})$, we have
\begin{equation}
\label{eq:helas13}
\int_{\overline{K}} g^2 \leq C \left( \int_{\overline{K}} g^2 \, \overline{\theta} + \int_{\overline{K}} | \nab g |^2 \right).
\end{equation}
This can be shown using a contradiction argument. Then, by scaling, we deduce~\eqref{eq:helas12} from~\eqref{eq:helas13}.

Collecting~\eqref{eq:helas12} and~\eqref{eq:helas11}, we obtain
$$
\| f \|^2_{0,K} \leq C h^2 \| \nab f \|^2_{0,K} + C \left( h^{-1} \| f \|_{0,K} + \| \nab f \|_{0,K} \right) \| \Aa^{1/2} \nab (u-u_h) \|_{0,K},
$$
from which we infer that
\begin{multline*}
\| f \|^2_{0,K} \leq C h^{-2} \| \Aa^{1/2} \nab (u-u_h) \|^2_{0,K} + C \| \nab f \|_{0,K} \| \Aa^{1/2} \nab (u-u_h) \|_{0,K} \\ + C h^2 \| \nab f \|^2_{0,K},
\end{multline*}
and hence (compare with~\eqref{eq:fin4})
\begin{equation}
\label{eq:helas14}
\| f \|^2_{0,K} \leq C h^{-2} \| \Aa^{1/2} \nab (u-u_h) \|^2_{0,K} + C h^2 \| \nab f \|^2_{0,K}.
\end{equation}

\medskip

Following Step~5 of the proof of Theorem~\ref{theo:borneinf}, we obtain~\eqref{eq:fin5_pre}, which we write as
$$
| j_{\alpha,\beta} | \leq C h^{-d/2} \| \Aa^{1/2} \nab (u-u_h) \|_{0,K_\alpha \cup K_\beta} + C h^{1-d/2} \| f \|_{0,K_\alpha \cup K_\beta}.
$$
Using~\eqref{eq:helas14}, we get
$$
| j_{\alpha,\beta} | \leq C h^{-d/2} \| \Aa^{1/2} \nab (u-u_h) \|_{0,K_\alpha \cup K_\beta} + C h^{2-d/2} \| \nab f \|_{0,K_\alpha \cup K_\beta},
$$
and thus (compare with~\eqref{eq:fin5})
\begin{align}
\| j_{\alpha,\beta} \|^2_{0,\Gamma_{\alpha,\beta}}
&\leq C h^{d-1} | j_{\alpha,\beta} |^2
\nonumber
\\
&\leq C h^{-1} \| \Aa^{1/2} \nab (u-u_h) \|^2_{0,K_\alpha \cup K_\beta} + C h^3 \| \nab f \|^2_{0,K_\alpha \cup K_\beta}.
\label{eq:fin5_bis}
\end{align}

\medskip

Collecting~\eqref{eq:fin3_bis}, \eqref{eq:helas14} and~\eqref{eq:fin5_bis}, we obtain, as in Step~6 of the proof of Theorem~\ref{theo:borneinf}, that
$$
\int_{K_\alpha} \boldsymbol{\Sigma}_h\Aa^{-1}\boldsymbol{\Sigma}_h
\leq
C \sum_{i \in K_\alpha}\| \Aa^{1/2} \nab (u-u_h) \|^2_{0,\Omega_i}
+ C h^4 \sum_{i \in K_\alpha} \| \nab f \|^2_{0,\Omega_i}.
$$
Using the regularity of the mesh, we deduce that
\begin{equation*}
E^2_{\rm CRE}(u_h,\widehat{\bq}_h) = \int_{\Omega} \boldsymbol{\Sigma}_h\Aa^{-1}\boldsymbol{\Sigma}_h \le C\| \Aa^{1/2} \nab (u-u_h) \|^2_{0,\Omega} + C h^4 \| \nab f \|^2_{0,\Omega}.
\end{equation*}
This concludes the proof of Corollary~\ref{coro:erc_autre}.
\end{proof}

\medskip

\noindent {\bf Acknowledgments.} The first author thanks Inria for enabling his two-year leave (2014--2016) in the MATHERIALS project team. The work of the second author is partially supported by ONR under grant N00014-20-1-2691 and by EOARD under grant FA8655-20-1-7043. The second author acknowledges the continuous support from these two agencies. The authors would also like to thank Claude Le Bris for the fruitful discussions we had on the topics covered by this article.



\end{document}